\newcommand\norm[1]{\left\lVert#1\right\rVert}
\newtheorem{cor}{Corollary}[section]  % this defines 'cor'
\newtheorem{defi}[cor]{Definition}  % now 'defi' shares numbering with 'cor'
\newtheorem{assumption}[cor]{Assumption} 
\newtheorem{lem}[cor]{Lemma}
\newtheorem{teo}[cor]{Theorem}
\newtheorem{rem}[cor]{Remark}
\newcommand{\pa}{\partial}
\numberwithin{equation}{section}
\newtheorem{theorem}{Theorem}
\theoremstyle{plain}
\numberwithin{equation}{section}
\begin{document} 
\title{Higher-order Riemannian spline interpolation problems: a unified approach via gradient flows}
\author{
{\sc Chun-Chi Lin} \thanks{Department of Mathematics, National Taiwan Normal University, Taipei, 116 Taiwan, \url{chunlin@math.ntnu.edu.tw}} 
~ and ~
{\sc Dung The Tran} \thanks{Department of Mathematics, National Taiwan Normal University, Taipei, 116 Taiwan;  
Current Address: University of Science, Vietnam National University, Hanoi, Vietnam, 
\url{tranthedung56@gmail.com}} 
}

\date{October 17, 2025}
%\date{\today}

\maketitle

\noindent \textbf{Keywords:} higher-order Riemannian spline, Riemannian cubic, 
geometric $k$-spline, 
spline interpolation, 
least-squares fitting
%, polyharmonic map heat flow 

\noindent \textbf{MSC(2020):} primary  35K52, 49J20;  secondary  41A15, 35K25

\begin{abstract}  
This paper addresses the problems of spline interpolation on smooth Riemannian manifolds, with or without the inclusion of least-squares fitting. Our unified approach utilizes gradient flows for successively connected curves or networks, providing a novel framework for tackling these challenges. This method notably extends to the variational spline interpolation problem on Lie groups, which is frequently encountered in mechanical optimal control theory. As a result, our work contributes to both geometric control theory and statistical shape data analysis. 
We rigorously prove the existence of global solutions 
in H\"{o}lder spaces for the gradient flow and demonstrate that the asymptotic limits of these solutions validate the existence of solutions to the variational spline interpolation problem. This constructive proof also offers insights into potential numerical schemes for finding such solutions, reinforcing the practical applicability of our approach. 
\end{abstract}

\tableofcontents

\baselineskip=10.0 pt plus .5pt minus .5pt

\section{Introduction} 
\label{Sec:Intro}

%%%%%%%%%%%%%%% C. Results in Literature %%%%%%%%%%%%%%%%
%\subsection{Previous results} 

In an $m$-dimensional Riemannian manifold $M$, the variational spline interpolation problem involves identifying critical points of a higher-order energy functional, given a set of ordered points 
$\mathcal{P}=\{p_0, \ldots, p_q\}\subset M$. These critical points are selected from a set of maps $\gamma:[x_{0},x_{q}]\rightarrow M$, which represent sufficiently smooth maps satisfying $\gamma(x_j)=p_j$ for all $j$, 
and $x_0<x_1<\ldots<x_{q-1}<x_q$. 
Such a critical point $\gamma$ is referred to as a Riemannian spline or a geometric $k$-spline, extending the notion of polynomial splines from Euclidean spaces to Riemannian manifolds. The importance of data-fitting problems in curved spaces became apparent in the late 1980s, driven by advancements in mechanics, robotics industries, and data science. Given the necessity of extending spline-based methods to manifold settings in physics and applied science, considerable research efforts have been devoted to geometric $k$-splines. Particularly, when $k=2$, these splines are commonly referred to as Riemannian cubics, garnering considerable attention in the literature. These endeavors have yielded numerous research works on higher-order Riemannian splines or geometric $k$-splines with a broad array of applications, as documented in \cite{GK85, GHMRV12-1, GHMRV12-2, KDLS21}.

\bigskip 

{\bf Problem 1: The variational spline interpolation on Riemannian manifolds. } 
In the variational spline interpolation problem on smooth Riemannian manifolds, the objective is to determine curves or maps that start at $p_0$, end at $p_q$, and pass through the specified points 
$\{p_1, \ldots, p_{q-1}\}$ while minimizing the energy functional, 
\begin{align} 
\label{eq:E-energy}
\mathcal{E}_{k} [\gamma] 
&=\frac{1}{2} \sum\limits_{l=1}^{q}  
\int_{I_l} |D_{x}^{k-1} \gamma_{l,x}(x)|^2 \, dx 
\text{,}
\end{align}  
where 
$\gamma_{l,x}=\partial_x\gamma_l$ represents 
the velocity
of $\gamma_l=\gamma_{\lfloor_{\bar{I}_l}}$, $I_l=(x_{l-1},x_l)$, 
and $D_x:=D_{\partial_x\gamma}$ denotes the (directional) covariant derivative on $M$. 
Geometric control theory has motivated the approach of variational methods to tackle the variational spline interpolation problem on a Riemannian manifold. 
In the literature, this approach typically involves working with Euler-Lagrange equations and applying Pontryagin's maximum principle. 
However, existing results on \emph{global} solutions for Riemannian splines are still limited and often come with specific assumptions. 
The foundational work for generalizing splines from Euclidean spaces to Riemannian manifolds was established by Gabriel and Kajiya in \cite{GK85}, and independently by Noakes, Heinzinger, and Paden in \cite{NHP89}. 
Camarinha, Silva Leite, and Crouch \cite{CSC95-1} extended the concept of cubic splines to higher-order polynomial, or geometric, splines on complete Riemannian manifolds $M$. In \cite{NHP89, CSC95-1}, the construction of such splines relies on reducing the Euler–Lagrange equations associated with variational splines to nonlinear systems of ordinary differential equations (ODEs), or on explicit formulas involving exponential maps. These approaches critically depend on the assumption that $M$ is a Lie group. Consequently, the existence theory for variational splines on general complete Riemannian manifolds remained a significant challenge at the time. 
Addressing this issue from a variational viewpoint, Giambò, Giannoni, and Piccione \cite{GGP02, GGP04} later established the existence of global weak solutions to Riemannian spline boundary-value problems on complete Riemannian manifolds using min-max methods. 
Recently, Camarinha, Silva Leite, and Crouch in \cite{CSC22} used biexponential maps to prove the \emph{local} existence and uniqueness of Riemannian cubics subject to boundary conditions on positions and velocities at a point. 
Note, however, that in these works, the variational spline interpolation problem is formulated as a local problem, meaning that the boundary conditions at interior boundary points (i.e., knot points) are prescribed as constant values. 
Heeren and Rumpf addressed this issue in \cite{HRW19} specifically for the case of Riemannian cubics. They demonstrated the existence of weak solutions for Riemannian cubics in the elliptic Sobolev space $W^{2,2}$ Sobolev space by employing $\Gamma$-convergence techniques and direct methods. 
It's worth noting that the boundary conditions at interior boundary points in their work differ from the prescribed constant values seen in previous studies such as \cite{CSC95-1, GGP02, GGP04}. This variation in boundary conditions adds complexity to the construction of Riemannian splines, especially given the scarcity of explicit solutions and different boundary conditions at the interior boundary points. Consequently, applying the biexponential maps approach becomes challenging in this context. 
Given the difficulties in computing Riemannian cubic splines globally rather than locally, there has been increasing interest in establishing their existence and finding numerical approximations. For instance, Noakes and Ratiu explored the problem of approximating Riemannian cubics in a Lie group using bi-Jacobi fields in \cite{NR16}.

\bigskip

{\bf Problem 2: The variational spline interpolation versus least-squares fitting on Riemannian manifolds. } 
The variational spline interpolation versus least-squares fitting problem on smooth Riemannian manifolds seeks to construct curves or maps that begin at a prescribed point $p_0$, end at $p_q$, and pass as closely as possible through a given set of intermediate points $\{p_1, \ldots, p_{q-1}\}$, while minimizing a higher-order energy functional $\mathcal{E}_{k}$. 
This formulation can be viewed as a relaxed or complementary variant of the classical variational spline interpolation problem. Instead of requiring exact interpolation at the intermediate points, the problem incorporates a penalty term that softly enforces proximity to the target data. Specifically, the modified energy functional includes the penalization term: 
\begin{equation} 
\label{eq:penalty}
\frac{1}{2\sigma^2} \sum_{j=1}^{q-1} \text{dist}^2_{M}(p_j, \gamma(x_j))
\text{,}
\end{equation} 
where $\sigma>0$ is a small parameter. As $\sigma\rightarrow 0$, the fitted curve is increasingly constrained to interpolate the points $\{p_0, \ldots, p_{q}\}$ more precisely. 

Data fitting in curved spaces focuses on capturing desired geometric features or trajectories without necessarily enforcing exact interpolation of the given data on the manifold. This problem originally arose in the context of mechanical control theory, particularly in trajectory planning, and has since found significant applications in statistical shape analysis. Its relevance has expanded across various research areas, including image processing and shape data analysis (e.g., \cite{CZKI17, HFJ14, JK87, MSK10, MTM17}). 
More recently, such problems have appeared in quantum computing, where the constraints of quantum mechanics naturally lead to least-squares fitting formulations involving so-called quantum splines \cite{ACCS18, BHM12}. A central challenge in these settings is the identification of critical points or minimizers of higher-order energy functionals. 
In response, researchers have developed various strategies to generalize classical interpolation and approximation techniques to Riemannian manifolds. 
One of these directions is the study of template matching problems, which aims to align or compare geometric data while respecting the manifold structure (see, for example, \cite{GHMRV12-1, GHMRV12-2}).

The combination of higher-order Riemannian splines and least-squares fitting plays a crucial role in accurately representing data on smooth curved surfaces. Many investigations in the literature introduce penalization techniques to minimize the discrepancy between predefined data points and the optimized trajectory. Machado, Silva Leite, and Krakowski \cite{MSK10} explored the variational spline interpolation problem versus least-squares fitting, examining a limiting process that involves smoothing geometric splines when the underlying manifold $M$ is Euclidean. However, despite these efforts, there remains -- to the best of the authors' knowledge -- a lack of rigorous proofs establishing the existence of solutions to the geometric $k$-spline interpolation versus least-squares fitting problem on Riemannian manifolds. While some works focus on the necessary or sufficient conditions for spline existence (e.g., \cite{MSK10, MTM17}), others primarily develop numerical algorithms and offer practical solutions (e.g., \cite{BHM12, SASK12, MTM17, ACCS18, KDLS21}). The numerical algorithms presented in \cite{SASK12, MTM17} employ gradient steepest descent methods that leverage the Palais metric within the infinite-dimensional space of admissible curves. 

For a more comprehensive overview of the development of Riemannian splines and related interpolation problems, we refer interested readers to the recent survey articles \cite{CSC23, MP24}.

%%%%%%%%%%%%%%%%%%%%%%%%%%%%%%%%%%%%%%%%%%%%%%%%%%%%%
%%% Two variational spline interpolation problems %%%
%%%%%%%%%%%%%%%%%%%%%%%%%%%%%%%%%%%%%%%%%%%%%%%%%%%%%

\subsection{Our Approach}
%%%%%%%%%%%%%%%%%%%%%%%%%%%%%%%%
%%%%%%%%%%%%%%%%%%%%%%%%%%%%%%%% 

In this article, we develop a unified framework for addressing both Problem 1 and Problem 2. Our approach begins with the analysis of gradient flows associated with the variational spline interpolation problem, incorporating least-squares fitting on Riemannian manifolds (Problem 2). By introducing a suitable penalty term to replace the one in \eqref{eq:penalty}, we subsequently obtain solutions to the gradient flows corresponding to the variational spline interpolation problem without least-squares fitting (Problem 1). 
Namely, to take care of the penalty term in \eqref{eq:penalty}, we repalce it by  
\begin{align}
\label{eq:T-energy-chi} 
\frac{1}{\sigma^2} \sum\limits_{l=1}^{q-1} \mathcal{T} [\chi_{l}] 
= \frac{1}{2\sigma^2} \sum\limits_{l=1}^{q-1} 
\int_{I_l} | \chi_{l,x}(x)|^2 \, dx, 
\end{align} 
where 
\[
\mathcal{T} [\chi_{l}] 
= \frac{1}{2} \int_{I_l} | \chi_{l,x}(x)|^2 \, dx
\]
represents the tension energy functional, and 
$\chi_l$ is a map, representing a curve connecting $p_l$ and $\gamma(x_l)$ for each $l$. 
When $\chi_l$ is a critical point of the tension energy functional $\mathcal{T}$, it represents a geodesic in the Riemannian manifold, and $\mathcal{T}$ satisfies 
\begin{align*}
\mathcal{T}[\chi_l]=
\frac{1}{2 |I_l|} \left(\text{Length}(\chi_l)\right)^2 
\text{.} 
\end{align*}

Note that, if $\chi_l$ is a geodesic in a Riemannian manifold $M$ and $\text{Length}(\chi_l)$ is sufficiently small, then 
$\text{dist}_{M}(\chi_l(x=0),\chi_l(x=1))=\text{Length}(\chi_l)$.  
Thus, we investigate the energy functional, 
$\mathcal{E}_{k}^{\lambda,\sigma}: 
\Theta_{\mathcal{P}}
\rightarrow [0,\infty)$, defined by   
\begin{align} 
\label{eq:E_s-energy-f} 
\nonumber 
\mathcal{E}_{k}^{\lambda, \sigma} 
[(\gamma, \chi_{1}, \ldots, \chi_{q-1} )] 
&= \frac{1}{2} \sum\limits_{l=1}^{q}  
\int_{I_l} |D_{x}^{k-1} \gamma_{l,x}(x)|^2 \, dx 
+ \frac{\lambda}{2} \sum\limits_{l=1}^{q} \int_{I_l} | \gamma_{l,x}(x)|^2 \, dx 
\\ 
& ~~~~~~~~~~~~~~~~~~~~~~~~~~~~~~~~~~~~ 
+ \frac{1}{2\sigma^2} \sum\limits_{l=1}^{q-1} \int_{I_l} | \chi_{l,x}(x)|^2 \, dx, 
\end{align}
where $\Theta_{\mathcal{P}}$ is the set of admissible maps $(\gamma, \chi_{1}, \ldots, \chi_{q-1})$ defined in 
\S \ref{Subsec:2.2}, 
and $\lambda\in[0,\infty)$, $\sigma \in(0,\infty)$ are constants. 
Each element in $\Theta_{\mathcal{P}}$ corresponds to a union of suitably connected arcs, where the points ${p_0, \ldots, p_q}$ serve as endpoints of certain arcs. 
In fact, we will study the gradient flow associated with the energy functional \eqref{eq:E_s-energy-f}, which comprises a coupled system: a higher-order gradient flow for the map $\gamma$ and a harmonic map heat flow for each $\chi_l$, $l \in \{1, \ldots, q-1\}$. These components are intrinsically linked through their boundary conditions, leading to a nontrivial interaction between $\gamma$ and the auxiliary maps $\chi_l$.

In \S\ref{Sec:NPSN}, we formulate the variational spline interpolation problems and reformulate them as nonlinear parabolic systems. 
In \S\ref{Sec:linear}, we linearize these nonlinear systems to facilitate the application of Solonnikov's theory. 
Since the orders of parabolicity (see Remark \ref{rem:parabolicity order} for the meaning of this terminology) of the equations vary across different arcs of elements in $\Theta_{\mathcal{P}}$, with values ranging between 
$2k$ and $2$, we decompose the linearized system into two subsystems so that Solonnikov's framework can be applied appropriately to each. 
A major technical challenge in this analysis lies in the verification of the \emph{complementary conditions}—a subtle yet essential step to ensure the well-posedness of the system. Whether these complementary conditions are satisfied depends critically on how the boundary conditions are prescribed. 
The (geometric) \emph{compatibility conditions} on the initial data at the boundary, as defined in Definitions \ref{def:cc-order-p-fitting} and \ref{def:geom-cc-order-p-fitting}, play a key role in achieving the desired regularity of solutions up to the initial time for each arc of an element of $\Theta_{\mathcal{P}}$ for the solutions of the coupled linear parabolic system in \S\ref{Sec:linear}. These compatibility conditions are also crucial for implementing the bootstrapping argument and applying Solonnikov's theory, both of which are necessary for establishing the higher regularity of local solutions to the nonlinear parabolic system in \S\ref{Sec:STE}.

Following the interpolation problem in \cite[(2.12)]{GHMRV12-1} and the study of Riemannian cubics with tension, as investigated in various works (e.g., \cite{PN12}), we likewise introduce the parameter $\lambda \ge 0$ in the definition of the energy functional $\mathcal{E}^{\lambda, \sigma}_{k}$ in \eqref{eq:E_s-energy-f} (and similarly in $\mathcal{E}^{\lambda}_{k}$ in \eqref{eq:E_s-energy}).   
For the case $\lambda = 0$, \cite[Lemma 2.15]{HRW19} provides an example demonstrating that a minimizing sequence for Riemannian splines may fail to recover the classical Riemannian cubics. 
This failure occurs because control over the curve parametrization is lost, demonstrating the critical role of the tension term in preserving regularity and compactness.  
Note that the velocities at endpoints in \cite[Lemma 2.15]{HRW19} are not fixed, which results in a loss of control over the curve reparametrization and ultimately leads to nonexistence. A closely related result has been recently established in \cite[Theorem 1.7]{CGP25} for the case $k = 2$, where the existence of Riemannian cubics can be recovered from a minimizing sequence by prescribing the velocities at the endpoints. In the present article, the fixed velocities and all derivatives up to order $k-1$ at the endpoints, together with the fact that the total energy is nonincreasing in time, provide a control over the parametrization speed and hence yield the required uniform upper bounds for the tension energy. Consequently, our main results remain valid for $\lambda \ge 0$, as kindly pointed out by the anonymous referee.

While Theorem \ref{thm:Main_Thm-fitting}, proved in \S \ref{Sec:Interpolation+Penalty}, establishes the existence of classical global solutions and their asymptotic limits for the gradient flow of the penalized energy functional $\mathcal{E}^{\lambda, \sigma}_{k}$ in \eqref{eq:E_s-energy-f} for each fixed penalty parameter $\sigma > 0$,
Theorem \ref{thm:Main_Thm-fitting-appl}, proved in \S \ref{Sec:penalty_method}, demonstrates that this framework provides a penalty method in constructing higher-order Riemannian splines for the interpolation problem. 
Specifically, we show that as the penalty parameter $\sigma$ from \eqref{eq:penalty} tends to zero (i.e., $\sigma \to 0$), the corresponding family of gradient flow solutions obtained via Theorem \ref{thm:Main_Thm-fitting} admits a convergent subsequence. This subsequence converges to a global solution of the gradient flow associated with the unpenalized energy functional \eqref{eq:E_s-energy}, subject to the prescribed initial and boundary conditions. Importantly, the asymptotic behaviour of this limiting solution as $t \to \infty$ yields a variational spline interpolating the given points on the Riemannian manifold. This convergence result thus provides a rigorous justification for interpreting our method as a penalty method in constructing higher-order Riemannian splines for the interpolation problem, thereby offering a complete resolution to Problem 1.

%%%%%%%%%%%%%%%%%%%%%%%%%%%%%%%% 
%%%%% Discuss Our results %%%%%%
%%%%%%%%%%%%%%%%%%%%%%%%%%%%%%%%

\subsection{Our Contribution}

To the best of our knowledge, the approach presented in this article is new in the literature on the existence and regularity of solutions to Riemannian spline interpolation problems.   
In Theorem \ref{thm:Main_Thm-fitting}, we establish the existence of global solutions in parabolic H\"{o}lder spaces for the gradient flow of the energy functional 
$\mathcal{E}^{\lambda, \sigma}_{k}: \Theta_{\mathcal{P}}\rightarrow\mathbb{R}$, defined in \eqref{eq:E_s-energy-f} (or see \S \ref{Subsec:2.2} for details). 
Note that each element $(\gamma, \chi_{1}, \ldots, \chi_{q-1})\in\Theta_{\mathcal{P}}$ 
is an admissible map  corresponding to a union of suitably connected arcs, referred to as a {\it network} later, on an $m$-dimensional smooth Riemannian manifold $M$. 
The manifold may be either compact or complete, provided it has uniformly bounded Riemannian curvature tensors in the $C^\infty$-topology. 
These solutions exhibit asymptotic limits that confirm the existence of solutions addressing the problem of variational spline interpolation versus least-squares fitting
on Riemannian manifolds (Problem 2).

We adapt and extend the techniques developed for the elastic flow of closed curves in \cite{DKS02}, open curves in \cite{Lin12}, and subsequently for networks in \cite{DLP19, DLP21, GMP19, GMP20}, to establish Theorem \ref{thm:Main_Thm-fitting}. A key distinction of our approach lies in the treatment of the boundary conditions. 
While the works in \cite{DLP19, DLP21, GMP19, GMP20} achieve only short-time existence and conditional long-time existence results, we succeed in establishing full long-time existence in Theorem \ref{thm:Main_Thm-fitting}. 
Furthermore, our results exceed the anticipated $C^1$-regularity for geometric $2$-splines discussed in 
\cite[p.419]{GHMRV12-1}, and improve upon the $C^{2,\frac{1}{2}}([\delta,1-\delta])$ interior regularity obtained in \cite{HRW19}. 
The constructive nature of our approach also indicates potential for practical applications, particularly in the development of effective numerical schemes.

In the remainder of this article, we denote by $\mathbb{N}_0 = \mathbb{N} \cup \{0\}$ the set of non-negative integers. To avoid potential confusion with parabolic H\"{o}lder spaces, we adopt a simplified notation for elliptic H\"{o}lder spaces: instead of using the standard form $C^{[\varrho],\{\varrho\}}(\bar{I})$, we write $C^{\varrho}(\bar{I})$, where $\varrho \in (0,\infty) \setminus \mathbb{Z}$, $\{\varrho\} = \varrho - [\varrho]$ denotes the fractional part of $\varrho$, and $[\varrho]$ is the greatest integer less than $\varrho$.

\bigskip

The following theorem addresses Problem 2. 

\begin{theorem} 
\label{thm:Main_Thm-fitting} 
Let $\ell \in \mathbb{N}_0$, $\alpha_j \in (0,1)$, and $\varrho_j=\ell+\alpha_j$, $j\in\{1,2\}$, such that $\alpha_1>k \alpha_2$. 
Assume that $M$ is a smooth $m$-dimensional Riemannian manifold, which is either compact without boundary or complete with uniformly bounded Riemannian curvature tensors in a $C^\infty$-topology. 
Suppose that 
$(\gamma_0, \chi_{1, 0}, \ldots, \chi_{q-1, 0})\in \Theta_{\mathcal{P}}$ is an initial datum 
fulfilling the geometric compatibility conditions of order $\ell$, 
as defined in Definition \ref{def:geom-cc-order-p-fitting}. 
Then, there exists a global solution 
$(\gamma, \chi_1, \ldots, \chi_{q-1})$ to 
\eqref{eq:E_s-flow-fitting}$\sim$\eqref{eq:BC-higher-order-5-fitting} 
with the regularity 
\begin{equation*}
\begin{cases} 
\gamma_{l} 
\in C^{\frac{2k+\varrho_1}{2k},2k+\varrho_1} 
\left([0,\infty)\times [x_{l-1}, x_{l}]\right) \bigcap C^{\infty}
\left((0,\infty)\times [x_{l-1}, x_{l}]\right), ~~~~~~~~~~\forall\, l\in\{1, \ldots, q\},\\
\chi_{l} 
\in C^{\frac{2+\varrho_2}{2},2+\varrho_2} 
\left([0,\infty)\times [x_{l-1}, x_{l}]\right) \bigcap C^{\infty}
\left((0,\infty)\times [x_{l-1}, x_{l}]\right), ~~~~~~~\forall\, l\in\{1, \ldots, q-1\},\\
\gamma(t,\cdot) \in C^{2k-2}([x_{0}, x_{q}]), ~~~~~~~~~~~~~~~~~~~~~~~~~~~~~~~~~~~~~~~~~~~~~~~~~~~~~~~~~~~~ \forall\, t\in[0,\infty). 
\end{cases}
\end{equation*} 
Furthermore, regarding the asymptotics, there exists a pair $(\gamma_{\infty}, \chi_{1, \infty}, \ldots, \chi_{q-1, \infty}) \in \Theta_{\mathcal{P}}$ with the regularity properties 
$\gamma_{\infty} \in C^{2k-2}([x_{0}, x_{q}])$, 
$\gamma_{l,\infty} \in C^{\infty} \left([x_{l-1}, x_{l}]\right)$, 
and $\chi_{l, \infty} \in C^{\infty} \left([x_{l-1}, x_{l}]\right)$, 
$\forall\, l$, obtained from a convergent subsequence such that $\gamma_{\infty}(\cdot)=\underset{t_j\rightarrow\infty}\lim \gamma(t_j,\cdot)$, 
$\chi_{l, \infty}(\cdot)=\underset{t_j\rightarrow\infty}\lim \chi_{l}(t_j,\cdot)$. 
Moreover, $\gamma_{l,\infty}$ satisfies \eqref{1.2}, thereby representing a Riemannian spline, while the map $\chi_{l,\infty}$ behaves as a geodesic 
for each $l$, and $(\gamma_{\infty}, \chi_{1,\infty}, \ldots, \chi_{q-1,\infty})$ resolves the variational spline interpolation versus least-squares fitting on Riemannian manifolds.  
\end{theorem}

\smallskip

The following theorem addresses Problem 1. 

\begin{theorem}
\label{thm:Main_Thm-fitting-appl} 
Let $(\gamma_0,\chi_{1,0}, \ldots, \chi_{q-1,0})=(\gamma^{\sigma}_0,\chi^{\sigma}_{1,0}, \ldots, \chi^{\sigma}_{q-1,0})\in\Theta_{\mathcal{P}}$ for all $\sigma\in(0,1)$ represent the initial data, where 
$\gamma_{l,0}=(\gamma_0)_{\lfloor I_l}$ satisfies the regularity 
$\gamma_{l,0}\in C^{\infty}(\bar I_l)$ 
and the geometric compatibility conditions of any order $\ell \in\mathbb{N}_0$, 
$\forall\, l\in\{1, \ldots, q\}$. 
Furthermore, we assume that the initial data 
$\chi_{l,0}=p_l$, for all $l\in\{1, \ldots, q-1\}$, are constant maps. 
Let $\big(\gamma^{\sigma}, \chi^{\sigma}_1, \ldots, \chi^{\sigma}_{q-1} \big)$ denote a global solution obtained from 
Theorem \ref{thm:Main_Thm-fitting} for any fixed $\sigma\in(0,1)$. 
Then, the following statements hold: 
\begin{itemize}
\item[\bf (i)] 
There exists a subsequence 
$\{\gamma^{\sigma_j}\}_{j\in\mathbb{N}}$ and its limit $\gamma^{0}$ 
such that $\gamma^{\sigma_j}_{l}, \gamma^{0}_{l}\in C^{\infty}([0,\infty)\times \bar{I}_l])$, $\forall\, l\in\{1, \ldots, q\}$,   
\begin{align}
\label{eq:gamma^0_in_C^s}
\lim_{j\rightarrow\infty}\|\gamma^{\sigma_j}_{l}-\gamma^{0}_{l}\|_{C^{\mu-2}([0,T]\times\bar{I}_{l})}=0, 
~~~~~ \forall\, \mu\in \mathbb{N}\setminus\{1\}, \, \forall\, l\in\{1, \ldots, q\}, \forall\, T>0, 
\end{align} 
and  
$\gamma^{0}=\big(\gamma^{0}_{1}, \ldots, \gamma^{0}_{q} \big)$ is a classical global solution 
to the higher-order gradient flow \eqref{eq:E_s-flow} 
and satisfies the initial-boundary conditions demonstrated 
in \eqref{eq:IC-higher-order}$\sim$\eqref{eq:BC-higher-order-3}. 

\item[\bf (ii)]  
The asymptotic limit 
$\gamma^{0}_{\infty}(\cdot)
=\lim\limits_{j\rightarrow\infty}\gamma^{0}(t_j,\cdot)$, for some sequence $t_j\rightarrow+\infty$, 
fulfills 
$\gamma^{0}_{l,\infty}\in C^{\infty}(\bar I_l), 
\,\forall\, l\in\{1, \ldots, q\}$,  
and $\gamma^{0}_{\infty}\in C^{2k-2}(\bar I)$ 
resolves the variational spline interpolation problem on Riemannian manifolds. 
\end{itemize} 

\end{theorem}

Our results, Theorems \ref{thm:Main_Thm-fitting} and \ref{thm:Main_Thm-fitting-appl}, rigorously establish the existence of solutions to the problems of higher-order spline interpolation on Riemannian manifolds, both with and without least-squares fitting. 
These results address significant open questions and challenges previously posed in the literature, including those in \cite{GHMRV12-1, GHMRV12-2, HRW19, MSK10}. Specifically, Theorems \ref{thm:Main_Thm-fitting} and \ref{thm:Main_Thm-fitting-appl} yield globally $C^{2k-2}$-smooth and piecewise smooth classical solutions to the geometric $k$-spline interpolation problem on any smooth Riemannian manifold—either compact or complete with uniformly bounded Riemannian curvature tensors in the $C^\infty$-topology. Consequently, these two higher-order interpolation problems can be effectively resolved through gradient flow methods and the penalty approach, which we collectively refer to as a unified gradient flow framework in this article.

%%%%%%%%%%%%%%%%%%%%%%%%%%%%%%%%

Although our approach establishes compactness results for obtaining global solutions to the parabolic flow and determining asymptotic limits -- identified as the so-called geometric $k$-splines -- by extracting a convergent subsequence of $\gamma(t_j,\cdot)$, an open question remains:
Can the uniqueness of the gradient flow convergence for geometric $k$-splines be ensured when passing to the asymptotic limits? 
Given that elasticae can be interpreted as nonlinear $2$-splines or nonlinear Riemannian cubics in an analogous manner, e.g., see \cite{GJ82}, 
a promising approach to addressing this uniqueness question is to employ techniques derived from {\L}ojasiewicz–Simon gradient inequalities, e.g., see \cite{DPS16, MP21}. These techniques have also been effectively utilized in analyzing the convergence of elastic flows or $p$-elastic flows on analytic Riemannian manifolds (e.g., see \cite{Pozzetta22} for valuable insights along this direction.). We leave this investigation for future research.

\subsection{The Structure of Article} 

%%%%%%%%%%%%%%% E. Article Structure %%%%%%%%%%%%%%%%

The remainder of this article is organized as follows.
\S \ref{Sec:problem_formulation} introduces the necessary mathematical background, establishes our notation, and formulates the two principal types of spline interpolation problems on Riemannian manifolds found in the literature. 
\S \ref{Sec:Interpolation+Penalty}, which constitutes one of our main contributions, reformulates the spline interpolation problem as a nonlinear parabolic system and outlines the proof strategy for Theorem \ref{thm:Main_Thm-fitting}. This result resolves the variational spline interpolation problem with least-squares fitting on Riemannian manifolds (Problem 2).
In \S \ref{Sec:penalty_method}, we apply Theorem \ref{thm:Main_Thm-fitting} to address the exact interpolation problem on Riemannian manifolds via a penalty method, thereby establishing Theorem \ref{thm:Main_Thm-fitting-appl} and completing our unified framework.
Finally, \S \ref{Sec:Appendix} provides essential supplementary material, including terminology, notation, key technical lemmas, and a concise summary of Solonnikov's theory for linear parabolic systems, following \cite{Solonnikov65}.

%%%%%%%%%%%%%%%%%%%%%%%%%%%%%%%%%%%%%%%%%%%%%%%%%%%%%%%%%%%%%
% Two Higher-order Riemannian spline interpolation problems  
%%%%%%%%%%%%%%%%%%%%%%%%%%%%%%%%%%%%%%%%%%%%%%%%%%%%%%%%%%

\section{Higher-order Riemannian Spline Interpolation Problems } 
\label{Sec:problem_formulation}

In this section, we introduce two higher-order Riemannian spline interpolation problems.
We provide the formulation, including relevant terminology and setup, for solving these problems using gradient flows.

Let $M$ denote a smooth $m$-dimensional Riemannian manifold. 
Suppose $X$, $Y$, and $Z$ are smooth vector fields in $M$, and define the covariant derivative of $Y$ with respect to $X$ by $D_X Y$. 
The Riemannian curvature tensor $R(\boldsymbol{\cdot}, \boldsymbol{\cdot}) \boldsymbol{\cdot}: TM \times TM \times TM \rightarrow TM$ is defined by the identity, $R(X,Y) Z=D_X D_Y Z- D_Y D_X Z- D_{[X,Y]} Z$, where $[X,Y]$ denotes the Lie bracket of the vector fields $X$ and $Y$.  
Denote by $D$ the Levi-Civita connection associated with $M$. 
We use the notation 
\begin{align}
\label{def:R_M}
R_M(\gamma)(\boldsymbol{\cdot},\boldsymbol{\cdot})\boldsymbol{\cdot}
=(R\circ\gamma) (\boldsymbol{\cdot},\boldsymbol{\cdot})\boldsymbol{\cdot}
\end{align} 
to represent the Riemannian curvature tensors at $\gamma\in M$.

Thanks to Nash's isometric embedding theorem, we may assume that $M$ is smoothly and isometrically embedded in 
$\mathbb{R}^n$ for some $n>m$. 
Denote by $\mathrm{I}_{\gamma}(\boldsymbol{\cdot},\boldsymbol{\cdot}):T_\gamma M\times T_\gamma M\rightarrow\mathbb{R}$ 
the first fundamental form of $M$ at $\gamma\in M$, 
and by 
$\Pi_{\gamma}(\cdot,\cdot): 
T_{\gamma}M\times T_{\gamma}M\rightarrow (T_{\gamma}M)^\perp$ 
the second fundamental form of $M$ at $\gamma\in M$. 
For differentiation of the first and second fundamental forms at $\gamma\in M$ with respect to the coordinates $(u_1,\cdots,u_n) \in \mathbb{R}^n$, they are expressed as 
$\partial^\sigma I_{\gamma}$ and $\partial^\sigma \Pi_{\gamma}$, respectively, 
where $\partial^{\sigma}=\partial_{u_1}^{\sigma_1} \cdots \partial_{u_n}^{\sigma_n}$ 
with $\sigma=(\sigma_1, \cdots, \sigma_n)$, $|\sigma|=\sum\limits_{k=1}^{n}\sigma_k$, and $\sigma_k\ge 0$ for each $k$. 
We let $C(M)>0$ be a universal constant for the upper bounds for the $\sup$-norm of any order of derivatives of the first and second fundamental forms of $M$, i.e., 
\begin{align}
\label{eq:C_infty-bdd_for_I+II}
\|\partial^{\sigma}\mathrm{I}_{M}\|_{C^{0}(M)}
\le C(M), 
~~~ 
\|\partial^{\sigma}\mathrm{\Pi}_{M}\|_{C^{0}(M)}
\le C(M), 
\end{align}
for any $\sigma$ with $|\sigma|\ge 0$.  
Since the universal constant $C(M)$ will only be used in the estimates in obtaining the short-time existence of this paper, it can be viewed as a locally required assumption on the smooth immersion of $M$ into a Euclidean space. 

\smallskip

When $\gamma: (a,b)\subset\mathbb{R} \rightarrow M \subset \mathbb{R}^n$ is a smooth map, 
denote the directional covariant derivative associated with the Levi-Civita connection $D$ by  $D_{\partial_x\gamma}$ or simply as $D_x$. 
The induced velocity 
$\partial_x \gamma: I \subset \mathbb{R}\rightarrow T_\gamma M$ 
and its restriction $\partial_x \gamma_l: I_l \subset \mathbb{R}\rightarrow T_\gamma M$ are denoted by  $\gamma_x$ and $\gamma_{l,x}$, respectively. 
The covariant differentiation can be expressed as 
\begin{align}
\label{eq:co-deri}
D_x V= 
\partial_x V- \Pi_\gamma(V, \partial_x\gamma), 
\end{align} 
where 
$V: (a,b) \rightarrow T_\gamma M \subset T_\gamma \mathbb{R}^n$ is a smooth vector field along $\gamma$, 
$\Pi_\gamma(\boldsymbol{\cdot},\boldsymbol{\cdot}):T_\gamma M\times T_\gamma M\rightarrow(T_\gamma M)^\perp$ 
represents the second fundamental form at 
$\gamma\in M\subset \mathbb{R}^n=\{(u_1, \ldots, u_n): u_j \in\mathbb{R}, \forall\, j\in\{1, \ldots, n\} \}$. 
We use \eqref{eq:co-deri} to convert covariant differentiation $D_x$ into partial differentiation $\partial_x$. 
From \eqref{eq:co-deri}, we can inductively deduce 
\begin{align}
\label{eq:co-deri-1} 
D_x^i \partial_x\gamma = \partial_x^{i+1}\gamma 
+ W_i(\partial_x^{i}\gamma, \cdots, \partial_x\gamma, \gamma), 
\end{align}  
where 
$W_i:\mathbb{R}^n \times \cdots \times \mathbb{R}^n \times M 
\cong \mathbb{R}^{in} \times M
\rightarrow \mathbb{R}^n$, $\forall\, i\in\mathbb{N}$,  
and $W_0(\gamma)=0$.  
From \eqref{eq:co-deri-1} 
and \eqref{eq:C_infty-bdd_for_I+II}, we have 
\begin{align}
\label{est:W_i-2nd-fund-form}
\|W_i\|_{C^{0}(\mathbb{R}^n \times \cdots \times \mathbb{R}^n \times M)} 
\le C(n,i, C(M)) 
\cdot \|\gamma\|_{C^{i}(M)}  
\text{.}
\end{align} 
Note that, when the domain of a continuously differentiable map is closed, e.g., $\gamma \in C^r([a,b]; M)$, or simply denoted as $C^r([a,b])$, where $r\in\mathbb{N}_{0}$, it means that its derivatives 
$\partial_{x}^\ell \gamma$ are continuous up to the boundary of $[a,b]$, $\forall\, \ell \in\{0,1,\cdots, r\}$.

\subsection{The Spline Interpolation on Riemannian Manifolds } 

For the problem of spline interpolation on Riemannian manifolds,  
let the set of admissible maps be 
\begin{align*} 
\Phi_{\mathcal{P}}
=&\big{ \{ }
\gamma:\bar I\rightarrow M 
\text{ \textbar} \, 
\gamma\in C^{2k-2}(\bar I), 
\gamma_{l}:=\gamma_{\lfloor \bar{I}_l} \in C^{2k+\varrho_1 }(\bar{I}_l), 
I=(x_0,x_q), I_l=(x_{l-1},x_l),  
\\ 
&~~~~~~~~~~~~~~~~~~ x_l=l \in \{0, 1, \ldots, q\}, 
\gamma(x_l)=p_l, \forall\, l \in \{0, 1, \ldots, q\}, 
\\ 
&~~~~~~~~~~~~~~~~~~ D^{\mu-1}_x \partial_x \gamma_l(x^{\ast})=v^{\mu}_{x^{\ast}}, \forall\, (l,x^{\ast}) \in \{(1,x_0), (q, x_q)\}, \forall\, \mu \in \{1, \ldots, k-1\} 
\big{ \} }, 
\end{align*} 
where $\varrho_1 \in (0,\infty) \setminus \mathbb{Z}$,  $v^{\mu}_{x^{\ast}}$ are constant vectors, and $k\ge 2$ is a positive integer. 
The image of each element in $\Phi_{\mathcal{P}}$ represents a piecewise smooth curve connecting the points in $\mathcal{P}$ in the order of their indices. 
Define the energy functional  
$\mathcal{E}_{k}^{\lambda}: 
\Phi_{\mathcal{P}}
\rightarrow [0,\infty)$ by 
\begin{align}
\label{eq:E_s-energy}
\mathcal{E}_{k}^\lambda [\gamma] 
&=\mathcal{E}_{k} [\gamma]+\lambda\cdot \mathcal{T} [\gamma],  
\end{align} 
where $\lambda\ge 0$ is a non-negative constant, $\mathcal{E}_k$ is defined in \eqref{eq:E-energy}, and $\mathcal{T}$ is a tension energy functional defined by 
\begin{align*} 
\mathcal{T} [\gamma] 
&= \sum\limits_{l=1}^{q} \frac{1}{2} 
\int_{I_l} | \gamma_{l,x}(x)|^2 \, dx. 
\end{align*}  
Here $|\cdot|=\sqrt{\langle\cdot,\cdot\rangle}$ denotes the norm in the tangent bundle $TM$, 
and $\langle \boldsymbol{\cdot},\boldsymbol{\cdot} \rangle$ is the inner product induced from the metric of $M$. 
When $\gamma\in \Phi_{\mathcal{P}}$,  
we apply the first variation formula in Lemma \ref{first-variation-higher-order} of \S\ref{Sec:Appendix} Appendix to obtain  
\begin{align} 
\nonumber 
&\frac{d}{d\varepsilon}_{\lfloor\varepsilon=0} 
\mathcal{E}_{k}^{\lambda}[\gamma^\epsilon]
= \sum_{l=1}^{q}\int_{I_l}\, \langle -\mathcal{L}_x^{2k}(\gamma_l), w \rangle \, dx +\sum^{k}_{\ell=1} \sum^{q}_{l=1}(-1)^{\ell-1}  
\bigg{\langle} D^{k-\ell}_{x}w, 
D^{k+\ell-2}_{x}\partial_{x} \gamma_{l} \bigg{\rangle} \bigg{|}^{x^{-}_{l}}_{x^{+}_{l-1}} 
\\ 
& ~~~~~~~~~~ +\sum^{k-1}_{\mu=2}\sum^{k-\mu}_{\ell=1}\sum^{q}_{l=1} (-1)^{\ell-1}  \bigg{\langle} 
D^{k-\mu-\ell}_{x} 
\left[R(w, \partial_{x} \gamma_{l}) D^{\mu-2}_{x}\partial_{x} \gamma_{l}\right], 
D^{k+\ell-2}_{x}\partial_{x} \gamma_{l} 
\bigg{\rangle} \bigg{|}^{x^{-}_{l}}_{x^{+}_{l-1}} 
\text{,} 
\label{eq:E-L-1}
\end{align} 
where $w$ denotes a collection of tangent vector fields, each defined on a corresponding arc $\gamma_{l}$. 
This collection is induced by a family of continuously differentiable admissible maps 
\begin{align*} 
\{\gamma^\epsilon\}_{\epsilon\in(-1,1)}\subset \Phi_{\mathcal{P}}   
\end{align*} 
satisfying 
$\gamma^0=\gamma$ and 
$w=\frac{d}{d\epsilon}_{\lfloor \epsilon=0} \gamma^\epsilon$.  
Note that since $\gamma\in \Phi_{\mathcal{P}}$, the condition 
$\gamma\in C^{2k-2}([x_0,x_q])$ ensures the so-called higher-order concurrency boundary conditions at the junction points in \eqref{eq:BC-higher-order-3}. 
Together with other boundary conditions in \eqref{eq:BC-higher-order-1}$\sim$\eqref{eq:BC-higher-order-3}, 
the sum of all boundary terms in \eqref{eq:E-L-1} vanishes due to cancellation.  
Note that the differential operator $\mathcal{L}_x^{2k}$ is given by 
\begin{align}
\label{Euler-eq-RPT}
\mathcal{L}_x^{2k}(\gamma_l)
:=(-1)^{k+1} D_{x}^{2k-1} \gamma_{l,x} 
+ \sum^{k}_{\mu=2} (-1)^{\mu+k+1} R\left(D^{2k-\mu-1}_{x}  \gamma_{l,x}, D^{\mu-2}_{x}  \gamma_{l,x} \right) \gamma_{l,x} +\lambda \cdot D_{x} \gamma_{l,x}
\text{,}
\end{align} 
which combines higher-order derivatives and curvature terms and  characterizes the Euler–Lagrange equations associated with the energy functional. 

Thus, if $\gamma_l $ is the critical point of $\mathcal{E}_{k}^{\lambda}$, then 
\begin{align}\label{1.1} 
\mathcal{L}_x^{2k}(\gamma_l)=0 
~~ \text{(intrinsic formulation)}  
~~ \text{ or }~~
\mathcal{L}_x^{2k}(\gamma_l) \perp T_{\gamma_l} M 
~~ \text{(extrinsic formulation)}. 
\end{align}

For any fixed $l$, $\gamma_l$ is called the geometric $k$-spline or Riemannian polynomial spline. As $k=2$, it is called a cubic spline or Riemannian cubic. The critical points of $\mathcal{E}_k^\lambda$ could also be viewed as poly-harmonic maps (as $k\ge2$) or biharmonic maps (as $k=2$) with tension energy. Moreover, we may call all of these critical points the higher-order Riemannian splines in the rest of the article (and Riemannian cubics as $k=2$). 
Thus, one could view the so-called Riemannian spline interpolations as the problem of finding the poly-harmonic maps or biharmonic maps $\gamma:[a,b]\rightarrow M$ such that $\gamma(x_l)=p_l\in M$, $\forall\, l\in \{0, \ldots, q\}$, and $a=x_0<x_1<\ldots<x_q=b$. 
 
\smallskip 

{\bf The Intrinsic Geometrical Parabolic System for Splines.} 
Let $\gamma_0 \in \Phi_{\mathcal{P}}$ be an initial curve; then, from the first variation formulae 
(see Lemma 
\ref{first-variation-higher-order} in \S\ref{Sec:Appendix} Appendix),    
the gradient flow of $\mathcal{E}_{k}^{\lambda}$ can be set up as, 
\begin{equation}
\label{eq:E_s-flow} 
\partial_t \gamma_{l}= \mathcal{L}_x^{2k}(\gamma_l), ~~~~~~~~~~~~~~~~~~~~~~~~~~~~~~~~~~~~~~~~~~~~~~~~~~~~~~~~~~~~~~
\forall ~ l\in\{1,\cdots,q\} 
\text{,}
\end{equation}
with the initial-boundary conditions, 
\begin{align} 
&\text{(initial datum)}
\nonumber
\\ 
&\label{eq:IC-higher-order} 
\gamma(0,x)=\gamma_{0}(x)\in \Phi_{\mathcal{P}}, 
~~~~~~~~~~~~~~~~~~~~~~~~~~~~~~~~~~~~~~~~~~~~~~~~~~~~~~~~~~~~~ x\in [x_0, x_q], 
\\
&\text{(Dirichlet boundary conditions)}
\nonumber
\\ 
&\label{eq:BC-higher-order-1} 
\gamma(t,x_{l})=p_{l}, ~~~~~~~~~~~~~~~~~~~~~~ l \in \{0, \ldots, q\}, ~~~~~~~~~~~~~~~~~~~~~~~~~~~~~~~~~~~~ t\in[0,T],  
\\
 &\label{eq:BC-higher-order-2} 
D_{x}^{\mu-1}\partial_x \gamma_l (t,x^{\ast})=v^{\mu}_{x^{\ast}} ~~~~~~~~~  (l,x^{\ast})\in \{(1, x_0), (q, x_q)\}, ~ 1\leq \mu \leq k-1, ~ t\in[0,T], 
\\ 
&\text{(The higher-order concurrency boundary conditions)} 
\nonumber
\\ 
&\label{eq:BC-higher-order-3}
[\Delta_{l} D_{x}^{\mu-1}\partial_x \gamma] (t)=0, 
~~~~~~~~~~~~~~ l \in \{1, \ldots, q-1\}, ~~~~~~~ 1 \leq \mu \leq 2k-2, ~ t\in[0,T], 
\end{align} 
where 
$\gamma_0$ denotes the initial data of $\gamma$; 
the boundary datum $\{v^{\mu}_{x^{\ast}}\}$ in 
\eqref{eq:BC-higher-order-2} are prescribed constant vectors; 
and 
\begin{align}\label{def:Delta_l}
[\Delta_{l} D_{x}^{\mu-1}\partial_x \gamma](t) 
:=D_{x}^{\mu-1}\partial_x \gamma_{l+1}(t,x_{l})
-D_{x}^{\mu-1}\partial_x \gamma_{l}(t,x_{l}). 
\end{align}  
The conditions, 
\eqref{eq:BC-higher-order-1} and \eqref{eq:BC-higher-order-3}, 
aim to keep $\gamma(t,\cdot)\in \Phi_{\mathcal{P}}$, 
for any fixed $t$.

\subsection{The Spline Interpolation versus Least-Squares Fitting on Riemannian Manifolds } 
\label{Subsec:2.2}

For the problem of spline interpolation versus least-squares fitting on Riemannian manifolds, let the set of admissible maps be 

\begin{align} 
\label{def:Theta_P} 
\Theta_{\mathcal{P}}
=& \big{ \{ } (\gamma, \chi_{1},\cdots,\chi_{q-1}) \text{ \textbar} \, 
\gamma:\bar I\rightarrow M, ~ \chi_{l}: \bar I_l \to M, 
I=(x_0,x_q), I_l=(x_{l-1},x_{l}), 
\\ \nonumber 
& ~~~~
x_l=l\in \{0, 1, \ldots, q\}, 
\gamma\in C^{2k-2}(\bar I), 
~ \gamma_{l}:=\gamma_{\lfloor \bar I_l}\in C^{2k+\varrho_1}(\bar I_l), ~ 
\chi_l \in C^{2+\varrho_2}(\bar I_l),  
\\ \nonumber
& 
\gamma(x_0)=p_0, \gamma(x_q)=p_q, 
\chi_{l}(x_{l-1})=p_{l}, \gamma_l(x_l)=\chi_{l}(x_l)=\gamma_{l+1}(x_l), \forall\, l \in \{1, \ldots, q-1\}, 
\\ \nonumber 
& ~~~~~~~~~~~~~~~~~~~~~~ 
D^{\mu-1}_x \partial_x \gamma_l(x^{\ast})=v^{\mu}_{x^{\ast}}, \forall\, \mu \in \{1, \ldots, k-1\}, 
(l,x^{\ast}) \in \{(1,x_0), (q, x_q)\} 
\big{\}}
\end{align} 
where $\varrho_j \in (0,\infty) \setminus \mathbb{Z}$, $[\varrho_1]=[\varrho_2]$, 
and $k\ge 2$ is a positive integer. 
The image of each element in $\Theta_{\mathcal{P}}$ represents a network connecting the points in $\mathcal{P}$ in the order of their indices.

When $(\gamma, \chi_{1}, \ldots, \chi_{q-1})\in \Theta_{\mathcal{P}}$,  
we apply the first variation formula in Lemma \ref{first-variation-higher-order} to obtain the first variation of the energy functional $\mathcal{E}_{k}^{\lambda,\sigma}$,   
\begin{align} 
\nonumber 
\frac{d}{d\varepsilon}_{\lfloor\varepsilon=0} & \mathcal{E}_{k}^{\lambda,\sigma}
[(\gamma^\epsilon, \chi_1^\epsilon, \ldots, \chi_{q-1}^\epsilon)] 
= \sum_{l=1}^{q}\int_{I_l}\, 
\langle -\mathcal{L}_x^{2k}(\gamma_l), w_{\lfloor \gamma_l} \rangle \, dx 
+\frac{1}{\sigma^2}\sum_{l=1}^{q-1}\int_{I_l}\, 
\langle -D_x \partial_x \chi_{l}, w_{\lfloor \chi_l} \rangle \, dx 
\\ \nonumber  
~~~ & +\sum^{k}_{\ell=1} \sum^{q}_{l=1}(-1)^{\ell-1}  
\bigg{\langle} D^{k-\ell}_{x}w_{\lfloor \gamma_l}, 
D^{k+\ell-2}_{x}\partial_{x} \gamma_{l} \bigg{\rangle} \bigg{|}^{x^{-}_{l}}_{x^{+}_{l-1}} 
\\ \nonumber 
~~~ &+\sum^{k-1}_{\mu=2}\sum^{k-\mu}_{\ell=1}\sum^{q}_{l=1} (-1)^{\ell-1}  \bigg{\langle} 
D^{k-\mu-\ell}_{x} 
\left[R(w_{\lfloor \gamma_l}, \partial_{x} \gamma_{l}) D^{\mu-2}_{x}\partial_{x} \gamma_{l}\right], 
D^{k+\ell-2}_{x}\partial_{x} \gamma_{l} 
\bigg{\rangle} \bigg{|}^{x^{-}_{l}}_{x^{+}_{l-1}} 
\\ 
&+\lambda \sum_{l=1}^{q} \langle w_{\lfloor \gamma_l},  \partial_{x} \gamma_{l} \rangle \bigg{\vert}^{x_{l}}_{x_{l-1}}+\frac{1}{\sigma^2} \sum_{l=1}^{q} \langle w_{\lfloor \chi_l},  \partial_{x} \chi_{l} \rangle \bigg{\vert}^{x_{l}}_{x_{l-1}} 
\text{,} 
\label{eq:E-L-2}
\end{align} 
where the operator $\mathcal{L}_x^{2k}(\gamma_l)$ is defined in \eqref{Euler-eq-RPT}, 
and  
$w=\frac{d}{d\epsilon}_{\lfloor \epsilon=0} (\gamma^\epsilon, \chi_1^\epsilon, \ldots, \chi_{q-1}^\epsilon)$ 
denotes the variation of the corresponding arc of the network,  
$(\gamma, \chi_1, \ldots, \chi_{q-1})$, 
induced by a family of continuously differentiable admissible maps   
\begin{align*} 
\{(\gamma^\epsilon, \chi_1^\epsilon, \ldots, \chi_{q-1}^\epsilon)\}_{\epsilon\in(-1,1)}\subset \Theta_{\mathcal{P}},  
\end{align*} 
satisfying the condition 
\begin{align*}
(\gamma^0, \chi_1^0, \ldots, \chi_{q-1}^0)=(\gamma, \chi_1, \ldots, \chi_{q-1}). 
\end{align*} 
Note, when $(\gamma, \chi_{1}, \ldots, \chi_{q-1}) \in \Theta_{\mathcal{P}}$, it implies that 
$\gamma \in C^{2k-2}([x_0, x_q])$, which is equivalent 
to satisfying both the concurrency boundary conditions given in \eqref{eq:BC-higher-order-2-fitting} and the higher-order concurrency boundary conditions at the junction points given in \eqref{eq:BC-higher-order-4-fitting}. 
Together with other boundary conditions in \eqref{eq:BC-higher-order-1-fitting}$\sim$\eqref{eq:BC-higher-order-5-fitting}, 
the sum of all boundary terms in \eqref{eq:E-L-2} vanishes due to cancellation. 

The critical point condition, obtained from the first variation being zero, indicates the balance of forces for $\gamma$ 
and $(\chi_{1}, \ldots, \chi_{q-1})$ under the influence of energy functionals. 
Thus, if $(\gamma, \chi_{1}, \ldots, \chi_{q-1})$ is the critical point, then 
\begin{align}
\label{1.2} 
\begin{cases}
\mathcal{L}_x^{2k}(\gamma_l)=0 
~\text{(intrinsic formulation)  }   
~\text{or }
\mathcal{L}_x^{2k}(\gamma_l) \perp T_{\gamma_l} M 
~ \text{(extrinsic formulation)},
\\
D_x \partial_x \chi_{l}=0 
~\text{(intrinsic formulation) }   
~\text{or } 
D_x \partial_x \chi_{l} \perp T_{\chi_l} M 
~ \text{(extrinsic formulation)}. 
\end{cases}
\end{align} 
This expression shows that the critical points of the energy functional $\mathcal{E}_{k}^{\lambda,\sigma}$ for the pair $(\gamma, \chi_{1}, \ldots, \chi_{q-1})$ in the admissible class of maps $\Theta_{\mathcal{P}}$ satisfy a system of coupled equations. The maps, $\gamma$ and $(\chi_{1}, \ldots, \chi_{q-1})$, are linked primarily through boundary conditions at the network's interior points, as they each possess their own energy functionals.

\bigskip

{\bf The Intrinsic Geometrical Parabolic System for Networks.} 
From the first variation formulae in 
Lemma \ref{first-variation-higher-order} and \eqref{1.2}, 
let the gradient flow of $\mathcal{E}_{k}^{\lambda, \sigma}$ be 
\begin{equation} 
\label{eq:E_s-flow-fitting} 
\begin{cases}
\partial_t \gamma_{l}= \mathcal{L}_x^{2k}(\gamma_l), ~~~~~~~~~~~~~~~~~~~~~~~~~~~~~~~~~~~~~~~~~~~~~~~~~~~~~~~~~ 
\forall ~ l\in\{1,\cdots,q\}, 
\\ 
\partial_t \chi_{l}= 
\sigma^2 D_x\partial_{x}\chi_{l}, ~~~~~~~~~~~~~~~~~~~~~~~~~~~~~~~~~~~~~~~~~~~~~~~~~
\forall ~ l\in\{1, \ldots, q-1\},
\end{cases}
\end{equation} 
with the initial-boundary conditions: 
\begin{align} 
\nonumber 
&\text{(initial datum)}
\\
&\label{eq:IC-higher-order-fitting} 
(\gamma, \chi_{1}, \ldots, \chi_{q-1})(0,\cdot)=(\gamma_0(\cdot), (\chi_{1,0}, \ldots, \chi_{q-1,0})(\cdot) )\in \Theta_{\mathcal{P}}, 
\\
\nonumber 
&\text{(Dirichlet boundary conditions)}
\\
&\label{eq:BC-higher-order-1-fitting} 
\gamma(t,x_{l})=p_{l}, ~~~~~~~~~~~~~~~~~~~~~~~~~~~~~~~~~~~~~~~~~~~~~~~~~~~~~~~~~~~~ l \in \{0,q\}, ~~ t\in[0,T],  
\\
&
\label{eq:BC-higher-order-3-fitting} 
D_{x}^{\mu-1}\partial_x \gamma_l (t,x^{\ast})=v^{\mu}_{x^{\ast}} ~~~~~~~~~  (l,x^{\ast})\in \{(1, x_0), (q, x_q)\}, ~ 1\leq \mu \leq k-1, ~ t\in[0,T], 
\\ 
&\label{eq:BC-2-order-1-fitting} 
\chi_{l}(t,x_{l-1})=p_{l}, ~~~~~~~~~~~~~~~~~~~~~~~~~~~~~~~~~~~~~~~~~~~~~~ l \in \{1,\ldots, q-1\}, ~~t\in[0,T],  
\\
\nonumber 
&\text{(concurrency boundary conditions at the junction points)}
\\
 &\label{eq:BC-higher-order-2-fitting}
\gamma_{l}(t,x_{l})=\chi_{l}(t,x_{l}) =\gamma_{l+1}(t,x_{l})
~~~~~~~~~~~~~~~~~~~~~~~~~~ l \in \{1, \ldots, q-1\}, ~~ t\in[0,T], 
\\ 
\nonumber 
&\text{(The higher-order concurrency boundary conditions at the junction points)}
\\ 
&\label{eq:BC-higher-order-4-fitting}
[\Delta_{l} D_{x}^{\mu-1}\partial_x \gamma](t)=0, 
~~~~~~~~~~~~~~~~~~ l \in \{1, \ldots, q-1\}, ~~ 1 \leq \mu \leq 2k-2, ~ t\in[0,T], 
\\
\nonumber 
&\text{(balancing conditions at the junction points)}
\\ 
&\label{eq:BC-higher-order-5-fitting}
(-1)^{k} [\Delta_{l} D_{x}^{2k-2}\partial_x \gamma](t)
+\frac{1}{\sigma^2}\partial_{x}\chi_{l}(t,x_{l})=0, 
~~~~~~~~~~~ l \in \{1, \ldots, q-1\}, ~~ t\in[0,T],
\end{align} 
where 
$\chi_{l,0}$ denotes the initial data of $\chi_{l}$ for each $l$, and the boundary data ${v^{\mu}_{x^{\ast}}}$, for all $x^{\ast} \in \{x_0, x_q\}$, are given constant vectors; and the quantity $\Delta_{l} D_{x}^{\mu-1} \partial_x \gamma$ is defined in \eqref{def:Delta_l}.

\bigskip

From the gradient flow equation \eqref{eq:E_s-flow-fitting} and the boundary conditions \eqref{eq:BC-higher-order-1-fitting}$\sim $\eqref{eq:BC-higher-order-5-fitting}, 
we derive the energy identity for \eqref{eq:E_s-energy-f}: 
\begin{equation} 
\label{eq:energy_ID_fitting}
\frac{d}{dt} \mathcal{E}^{\lambda, \sigma}_{k}[( \gamma, \chi_{1}, \ldots, \chi_{q-1})]= 
- \sum^{q}_{l=1} \int_{I_{l}} |\partial_t \gamma_l|^2 \,dx 
- \frac{1}{\sigma^4} \sum^{q-1}_{l=1} \int_{I_{l}} |\partial_t \chi_l|^2 \,dx. 
\end{equation}  
Hence, we may derive from the energy identity \eqref{eq:energy_ID_fitting} that,  $\forall\, t\ge 0$, 
\begin{equation}
\label{ineq:f_x-higher-order-f}
\begin{cases}
\sum\limits^{q}_{l=1}\|  \gamma_{l,x} \|_{L^2(I_{l})}^2 (t) 
\le \frac{2}{\lambda} \mathcal{E}^{\lambda, \sigma}_{k}
[( \gamma(0,\cdot), (\chi_{1}, \ldots, \chi_{q-1})(0,\cdot))], 
\\ 
\sum\limits^{q-1}_{l=1}\|  \chi_{l,x} \|_{L^2(I_{l})}^2 (t) 
\le 2 \sigma^{4} \mathcal{E}^{\lambda, \sigma}_{k}[(\gamma(0,\cdot),(\chi_{1}, \ldots, \chi_{q-1})(0,\cdot))]. 
\end{cases}
\end{equation}

\begin{rem}
\label{rem:new_bdd_L^2-d^{1}gamma}
The anonymous referee kindly suggested that Theorems \ref{thm:Main_Thm-fitting} and \ref{thm:Main_Thm-fitting-appl} may be extended to the case $\lambda \ge 0$ by replacing the uniform estimate of $\sum\limits_{l=1}^{q}\|\gamma_{l,x}\|_{L^2(I_{l})}^2 (t)$ obtained in \eqref{ineq:f_x-higher-order-f} with an alternative uniform bound. 
Such a bound plays a crucial role in deriving 
the G\"{o}nwall inequalities that ensure the long-time existence of solutions to the gradient flow. 
We outline the justification for this extension below and take this opportunity to express our sincere gratitude for the referee’s valuable suggestion.

Note that since $\gamma(t,\cdot):[x_0,x_q]=[0,q]\to M$ belongs to the class $C^{2k-2}([x_0,x_q])$ for every $t$, it follows that $\gamma \in C^{k}([x_0,x_q])$, as $k \ge 2$. 
By following a similar trick in the proof of \cite[Theorem~1.7]{CGP25}, 
we have 
\begin{align*}
&\left| 
\langle D^{k-2}_{x}\gamma_{x}, D^{k-2}_{x}\gamma_{x} \rangle  (\cdot,y)
-\langle D^{k-2}_{x}\gamma_{x}, D^{k-2}_{x}\gamma_{x} \rangle  (\cdot,x_0) 
\right| 
= 2 \left| \int_{x_0}^{y} \langle D^{k-1}_{x}\gamma_{x}, D^{k-2}_{x}\gamma_{x}\rangle \, dx 
\right| 
\\ 
& \le 2 
\left(\int_{x_0}^{y} |D^{k-1}_{x}\gamma_{x}|^2 \, dx \right)^{1/2} 
\cdot \left(\int_{x_0}^{y} |D^{k-2}_{x}\gamma_{x}|^2 \, dx \right)^{1/2} 
\\ 
&\le 2 \left(2 \cdot \mathcal{E}^{\lambda,\sigma}_{k}[(\gamma,\chi_1,\cdots,\chi_{q-1})] \right)^{1/2} 
\cdot \|D^{k-2}_{x}\gamma_{x}\|_{L^\infty(I)} \cdot \sqrt{q} 
\\ 
&\le 2 \left(2 \cdot \mathcal{E}^{\lambda,\sigma}_{k}(t=0) \right)^{1/2} 
\cdot \|D^{k-2}_{x}\gamma_{x}\|_{L^\infty(I)} \cdot \sqrt{q}
\, \text{,}
\end{align*} 
where $|\cdot|^2:=\langle\cdot,\cdot\rangle$ and 
$\mathcal{E}_{k}^{\lambda, \sigma}(t=0):=\mathcal{E}_{k}^{\lambda, \sigma} \big[(\gamma(0,\cdot), (\chi_1, \ldots, \chi_{q-1} ) (0,\cdot)) \big]$.   
By taking the supremum over $y\in I$ in the inequality above, we obtain 
\begin{align} 
\label{eq:sup_of_velocity^{k-1}}
\|D^{k-2}_{x}\gamma_{x}(t,\cdot)\|^2_{L^\infty(I)}  
\le \left(8 q \cdot \mathcal{E}^{\lambda,\sigma}_{k}(t=0) \right)^{1/2} 
\cdot 
\|D^{k-2}_{x}\gamma_{x}(t,\cdot)\|_{L^\infty(I)}  
+ \left|v_{x^\ast=0}^{k-1}\right|^2 
\, \text{,} 
\end{align} 
where $v_{x^\ast=0}^{k-1}=D^{k-2}_{x}\gamma_{x}$ is the boundary datum at $x^\ast=0$ and $t\in[0,\infty)$. 
From a straightforward algebraic manipulation of \eqref{eq:sup_of_velocity^{k-1}}, we obtain 
\begin{equation*}
\label{eq:L^infty-d^{k-1}gamma}
\|D^{k-2}_{x}\gamma_{x}(t,\cdot)\|_{L^\infty(I)} 
\le C(q, \mathcal{E}^{\lambda,\sigma}_{k}(t=0), v_{x^\ast=0}^{k-1})
\, , ~~~  
\forall\, t\in[0,\infty)
\text{,}
\end{equation*}
which in turn implies 
\begin{equation*}
\label{eq:L^2-d^{k-1}gamma}
\|D^{k-2}_{x}\gamma_{x}(t,\cdot)\|_{L^2(I)} 
\le C(q, \mathcal{E}^{\lambda,\sigma}_{k}(t=0), v_{x^\ast=0}^{k-1}) 
\, , ~~~  
\forall\, t\in[0,\infty) 
\text{.}
\end{equation*}
Proceeding inductively, we then derive 
\begin{equation}
\label{eq:L^2-d^{1}gamma}
\|\gamma_{x}(t,\cdot)\|_{L^2(I)} 
\le C(q, \mathcal{E}^{\lambda,\sigma}_{k}(t=0), v_{x^\ast=0}^{k-1}, \cdots, v_{x^\ast=0}^{1}) 
\, , ~~~  
\forall\, t\in[0,\infty)  
\text{.}
\end{equation} 
\end{rem}

\begin{rem}
\label{rem:BCs}
The boundary conditions appearing in the parabolic systems \eqref{eq:E_s-flow}$\sim$\eqref{eq:BC-higher-order-3} and 
\eqref{eq:E_s-flow-fitting}$\sim$\eqref{eq:BC-higher-order-5-fitting} are naturally induced by their respective sets of admissible maps, $\Phi_{\mathcal{P}}$ and $\Theta_{\mathcal{P}}$.

\begin{itemize}
\item
The higher-order concurrency boundary conditions 
in \eqref{eq:BC-higher-order-3} are imposed to ensure the $C^{2k-2}([x_0, x_q])$-regularity of the map $\gamma$. 

\item 
The concurrency boundary conditions in \eqref{eq:BC-higher-order-2-fitting} at the junction points  
are imposed to ensure the connectedness of the network $(\gamma, \chi_1, \ldots, \chi_{q-1})$ at all times. 

\item
The higher-order concurrency boundary conditions in \eqref{eq:BC-higher-order-4-fitting} are imposed to ensure the $C^{2k-2}([x_0, x_q])$-regularity of $\gamma$. 

\item
The balancing conditions at the junction points, as encoded in the boundary conditions \eqref{eq:BC-higher-order-1-fitting}$\sim$\eqref{eq:BC-higher-order-5-fitting}, arise from the first variation formula \eqref{eq:E-L-2} for the network. These conditions are designed to cancel the boundary terms that appear in the variation, thereby ensuring a well-posed variational structure.
\end{itemize}
\end{rem}

%%%%%%%%%%%%%%%%%%%%%%%%%%%%%%%%%%%%%%%%%%%%%%%%%%%%%%%%%%%%%%%%%%%%%%%%
%%%%%%%%%%%%%%%%%%%%%%%%%%%%%%%%%%%%%%%%%%%%%%%%%%%%%%%%%%%%%%%%%%%%%%%
%%\section{The spline interpolation versus least-squares fitting} 
%%%%%%%%%%%%%%%%%%%%%%%%%%%%%%%%%%%%%%%%%%%%%%%%%%%%%%%%%%%%%%%%%%%%%%%
%%%%%%%%%%%%%%%%%%%%%%%%%%%%%%%%%%%%%%%%%%%%%%%%%%%%%%%%%%%%%%%%%%%%%%%

\section{Gradient Flow for Networks on Riemannian Manifolds 
}
\label{Sec:Interpolation+Penalty}

In this section, we establish the existence of global classical solutions to the problem of spline interpolation versus least-squares fitting on Riemannian manifolds. Additionally, we demonstrate that the asymptotic limits, as time approaches infinity, correspond to solutions of the variational spline interpolation versus least-squares fitting on Riemannian manifolds. 
In other words, we give the proof of Theorem \ref{thm:Main_Thm-fitting} in this section.

%%%%%%%%%%%%%%%%%%%%%%%%%%%%%%%%%%%%%%%%%%%%%%%%%%%%%%%%%%%

\subsection{The Nonlinear Parabolic Systems for Networks} 
\label{Sec:NPSN}

As outlined in \S \ref{Subsec:2.2}, the variational spline interpolation versus least-squares fitting problem on Riemannian manifolds is initially formulated as an Intrinsic Geometrical Parabolic System for Networks. To enable the application of analytical tools from the theory of parabolic systems, we reformulate this intrinsic system into an Extrinsic Geometrical Parabolic System, and subsequently into a fully developed nonlinear parabolic system for networks, as detailed below. The short-time existence of solutions to the nonlinear parabolic system for networks crucially depends on this extrinsic formulation, whereas the long-time existence analysis is conducted within the intrinsic framework. To adapt the nonlinear evolution equations to the structure required by Solonnikov's theory, it is necessary to reparametrize certain curve segments in the network to reverse their orientation. To carry out this transformation, we introduce additional notation that allows for a systematic reorganization of the boundary conditions. This facilitates the recasting of the system into a parabolic framework, ensuring that the evolution equations satisfy the compatibility conditions necessary for applying Solonnikov's theory. 

\bigskip

{\bf The Extrinsic Geometrical Parabolic System for Networks.} 
To apply Solonnikov's theory to linear parabolic systems in deriving the short-time existence results for our nonlinear problem, we reformulate the intrinsic system \eqref{eq:E_s-flow-fitting}$\sim$\eqref{eq:BC-higher-order-5-fitting} in an extrinsic form by using \eqref{eq:co-deri} and \eqref{eq:co-deri-1} as below. 

\begin{align}
\label{eq:E_s-flow-extrinsic-chi}
&\partial_t \gamma_{l}= (-1)^{k+1} \partial_x^{2k}\gamma_{l}
+ F_{l} (\partial_x^{2k-1}\gamma_{l}, \ldots, \partial_x\gamma_{l}, \gamma_{l})~~~~~~\text{ in } (0,T) \times I_{l}, ~ l \in \{1, \ldots, q\}, 
\\ 
\label{eq:E_s-flow-extrinsic-chi-1} 
&\partial_t \chi_{l} 
= \sigma^2
\partial_x^{2}\chi_{l} 
+ \sigma^2 W_1(\partial_x\chi_{l}, \chi_{l}) 
~~~~~~~~~~~~~~~~~~~~~ \text{ in } (0,T) \times I_{l}, ~ l \in \{1, \ldots, q-1\}, 
\\
\label{eq:E_s-flow-extrinsic-IC-gamma} 
&\gamma_{l}(0,x)=\gamma_{l,0}(x), ~~~~~~~~~~~~~~~~~~~~~~~~~~~~~~~~~~~~~~~~~~~~~~~~~~~~~~~x \in I_{l}, ~ l \in \{1, \ldots, q\}, 
\\ 
\label{eq:E_s-flow-extrinsic-IC-chi} 
&\chi_{l}(0,x)=\chi_{l, 0}(x), ~~~~~~~~~~~~~~~~~~~~~~~~~~~~~~~~~~~~~~~~~~~~~~~~~~ x \in I_{l}, ~ l \in \{1, \ldots, q-1\}, 
\\ 
\label{eq:E_s-flow-extrinsic-chi-BC-1} 
&\gamma(t,x_{l})=p_{l}, ~~~~~~~~~~~~~~~~~~~~~~~~~~~~~~~~~~~~~~~~~~~~~~~~~~~~~~~~~~~~~~ l \in \{0,q\}, ~~ t\in[0,T],  
\\
\label{eq:E_s-flow-extrinsic-chi-BC-2} 
&\chi_{l}(t,x_{l-1})=p_{l}, ~~~~~~~~~~~~~~~~~~~~~~~~~~~~~~~~~~~~~~~~~~~~~~~ l \in \{1,\ldots, q-1\}, ~~t\in[0,T],  
\\
\label{eq:E_s-flow-extrinsic-chi-BC-3} 
&\partial_{x}^{\mu}\gamma_{l}(t,x^{\ast})=b^{\mu}_{l, x^{\ast}}, 
 ~~~~~~~~~ \mu \in \{1,\ldots, k-1\}, ~ (l, x^{\ast}) \in \{(1, x_0), (q, x_q)\}, ~ t\in[0,T], 
\\
\label{eq:E_s-flow-extrinsic-chi-BC-4}
&\gamma_{l}(t,x_{l})=\chi_{l}(t,x_{l}) =\gamma_{l+1}(t,x_{l}), 
~~~~~~~~~~~~~~~~~~~~~~~~~~ l \in \{1, \ldots, q-1\}, ~~ t\in[0,T], 
\\
\label{eq:E_s-flow-extrinsic-chi-BC-5}
&\partial_{x}^{\mu}\gamma_{l+1}(t, x_{l})-\partial_{x}^{\mu}\gamma_{l}(t, x_{l})=0,
~~ \mu \in \{1,\ldots, 2k-2\}, ~ l \in \{1, \ldots, q-1\}, 
~ t\in[0,T],
\\
\label{eq:E_s-flow-extrinsic-chi-BC-6} 
&
(-1)^{k}\left[\partial_{x}^{2k-1}\gamma_{l+1}(t, x_{l})-\partial_{x}^{2k-1}\gamma_{l}(t, x_{l}) \right] + \frac{1}{\sigma^2}\partial_{x}\chi_{l}(t, x_{l})=0, 
\\ \nonumber 
&~~~~~~~~~~~~~~~~~~~~~~~~~~~~~~~~~~~~~~~~~~~~~~~~~~~~~~
~~~~~~~~~~~~~~~~ l \in \{1, \ldots, q-1\}, ~ t\in[0,T], 
\end{align} 
where $\gamma_{l,0}$ denotes the initial data of $\gamma_{l}$, for each $l$, 
$$
W_1(\partial_x\chi_l,\chi_l)=-\Pi_{\chi_l}(\partial_x\chi_l, \partial_x\chi_l), 
$$ 
and 
\begin{align}
\label{eq:F_l} 
&F_l(\partial_x^{2k-1}\gamma_l, \ldots, \gamma_l) 
\\ \nonumber 
& = (-1)^{k+1} W_{2k-1}(\partial_x^{2k-1}\gamma_l, \ldots, \gamma_l) 
+\lambda \cdot \left( \partial_x^2\gamma_l+ W_{1}(\partial_x\gamma_l, \gamma_l) \right)
\\ \nonumber 
&~~~~ + \sum^{k}_{\mu=2} (-1)^{\mu+k+1} 
R\left(
\partial_x^{2k-\mu}\gamma_l
+W_{2k-\mu-1}(\partial_x^{2k-\mu-1}\gamma_l, \ldots, \gamma_l), 
\right.  
\\ \nonumber 
& ~~~~~~~~~~~~~~~~~~~~~~~~~~~~~~~~~~~~~~~~~~~~~~~~~~~~~~~~~~~~~~~~ 
\left. 
\partial_x^{\mu-1}\gamma_l
+W_{\mu-2}(\partial_x^{\mu-2}\gamma_l, \ldots, \gamma_l) 
\right)
\partial_x \gamma_l 
\text{,}
\end{align} 
each $W_i(\partial_x^{i}\gamma_l, \ldots, \gamma_l)$, $\forall\, i\in\mathbb{N}_0$, is defined in \eqref{eq:co-deri} and \eqref{eq:co-deri-1}, 
and each coefficient $b^{\mu}_{l, x^\ast}$ is a constant.

\smallskip

Notice that applying \eqref{eq:co-deri-1} together with the 
higher-order concurrency boundary conditions in \eqref{eq:BC-higher-order-4-fitting} ensures that, for each $\mu \in \{1, \cdots, 2k - 2\}$, the partial derivative $\partial_x^\mu \gamma(t, x)$ remains continuous at every $x \in \{x_1, \cdots, x_{q-1}\}$ for any time $t$. 
Additionally, we find that
$[\Delta_{l} D_{x}^{i-1}\partial_x \gamma](t)
=[\Delta_{l} \partial_{x}^{i} \gamma](t)$ for any $i\in\{1,\cdots,2k-2\}$. 
Thus, we may impose the boundary conditions in \eqref{eq:E_s-flow-extrinsic-chi-BC-5} as a replacement of the higher-order concurrency boundary conditions. 
Through this reformulation, terms involving covariant derivatives and intrinsic properties are expressed explicitly in terms of coordinates and the geometry of the embedding space.

\bigskip

{\bf The Parabolic System for Networks.} 
By utilizing the assumption $x_l=l$ for all $l\in\mathbb{Z}$, we define the parametrization as,
\begin{align}
\label{eq:parametrization-gamma}
g_{l}(t,y) = \gamma_{l}\left(t, (-1)^{l-1}y + x_{2\left[\frac{l}{2}\right]}\right),
\quad g_{l,0}(y) = \gamma_{l,0}\left((-1)^{l-1}y + x_{2\left[\frac{l}{2}\right]}\right), 
\end{align}  
\begin{align}
\label{eq:parametrization-chi}
h_{l}(t,y)=\chi_{l}(t, (-1)^{l-1}y+x_{2\left[\frac{l}{2}\right]}), 
\quad 
h_{l,0}(y)=\chi_{l,0}((-1)^{l-1}y+x_{2\left[\frac{l}{2}\right]}).   
\end{align} 
The orientation of $g_{l}$ and $h_{l}$ are obtained by reversing those of $\gamma_{l}$ and $\chi_{l}$ respectively, when $l$ is an even number. 
This transformation facilitates the application of Solonnikov's theory and the Banach Fixed Point Theorem to obtain the short-time existence. 
By applying \eqref{eq:parametrization-gamma} and \eqref{eq:parametrization-chi}, we can convert the evolution equations for 
$(\gamma, \chi_{1}, \ldots, \chi_{q-1})$ in 
\eqref{eq:E_s-flow-extrinsic-chi}$\sim$\eqref{eq:E_s-flow-extrinsic-chi-BC-6} 
into the nonlinear parabolic system for $(g,h)$: 
\begin{equation}
\label{AP-q-even-fitting}
\begin{cases}
\partial_t g_{l}= (-1)^{k+1}\cdot \partial_y^{2k} g_{l}
+G_l(\partial_y^{2k-1} g_{l}, \ldots, \partial_y g_{l}, g_{l}), 
~~~~~~~~~~~~~~~~ l \in\{1, \ldots, q\}, 
\\ 
\partial_t h_{l}= \sigma^2 \partial_y^{2} h_{l} 
+ \sigma^2 W_1(\partial_y h_l, h_l),  
~~~~~~~~~~~~~~~~~~~~~~~~~~~~~~~~~ 
l \in\{1, \ldots, q-1\}, 
\end{cases}
\end{equation} 
with the initial-boundary conditions: 
\begin{equation} 
\label{BC-q-even-fitting} 
\begin{cases}
\text{(initial conditions)} 
\\
g_{l}(0,y)= g_{l,0}(y),  ~~~~~~~~~~~~~~~~~~~~~~~~~~~~~~~~~~~~~~~~~~~~~~~~~~~~~~~~~~~ 
l\in \{1,\ldots, q\}, 
\\
h_{l}(0,y)= h_{l,0}(y), ~~~~~~~~~~~~~~~~~~~~~~~~~~~~~~~~~~~~~~~~~~~~~~~~~~~~~ 
l\in \{1,\ldots, q-1\}, 
\\ 
\text{(boundary conditions)} 
\\
g_{1}(t,0)= p_{0}, ~~~~ 
g_{q}(t, 2\{\frac{q}{2}\})=p_{q},  
\\ 
h_{l}(t, y^{\ast}_l)= p_{l}, 
~~~~~~~~~~~~~~~~~~~~~~~~~~~~~~~~~~~~~~ 
l \in \{1,\ldots, q-1\}, ~
y^{\ast}_l= 1- 2\{\frac{l}{2}\},  
\\
\partial_{y}^{\mu}g_{1}(t,0)= 
b^{\mu}_{1, x_0},   
~~ 
\partial_{y}^{\mu}g_{q}(t, 2\{\frac{q}{2}\})= 
(-1)^{(q-1)\cdot\mu} b^{\mu}_{q, x_q}, 
~~~~~~~ 
\mu \in \{1,\ldots, k-1\},  
\\ 
g_{l}(t, y^{\ast}_{l})= h_{l}(t, y^{\ast}_l) 
=g_{l+1}(t, y^{\ast}_{l}), 
~~~~~~~~~~~  
l \in \{1,\ldots, q-1\}, ~
y^{\ast}_l= 1- 2\{\frac{l+1}{2}\}, 
\\ 
\partial_{y}^{\mu-1}g_{l}(t, y^{\ast}_{l})
+ (-1)^{\mu-1} \partial_{y}^{\mu} g_{l+1}(t, y^{\ast}_{l})=0, 
\\ 
~~~~~~~~~~~~~~~~~~~~~~~~~~~
\mu \in \{1, \ldots, 2k-2\}, 
~ l \in \{1, \ldots, q-1\}, 
~ y^{\ast}_l= 1- 2\{\frac{l+1}{2}\},  
\\ 
\partial_{y}^{2k-1} g_{l}(t, y^{\ast}_{l})
+\frac{(-1)^{k+1}}{\sigma^2}\partial_{y}h_{l}(t, y^{\ast}_{l}) 
+\partial_{y}^{2k-1} g_{l+1}(t, y^{\ast}_{l})=0, 
\\ 
~~~~~~~~~~~~~~~~~~~~~~~~~~~~~~~~~~~~~~~~~~~~~~~~~~~~~~ 
l \in \{1, \ldots, q-1\}, ~
y^{\ast}_l= 1- 2\{\frac{l+1}{2}\}, 
\end{cases} 
\end{equation} 
where $t\in[0,T]$, $y\in[0,1]$, $\{\frac{b}{a}\}:=\frac{b}{a}-[\frac{b}{a}]\in[0,1)$, and 
\begin{align}\label{eq:G_l-F_l-gamma+chi}
G_l(\partial_y^{2k-1} g_{l}, \ldots, g_{l}):
=F_{l} \left((-1)^{(l-1)(2k-1)}\partial_y^{2k-1} g_{l}, \ldots, (-1)^{(l-1)\mu}\partial_y^{\mu} g_{l}, \ldots, g_{l}\right).   
\end{align}

\begin{rem}
To simplify the notation, the nonlinear parabolic system
\eqref{AP-q-even-fitting}$\sim$\eqref{BC-q-even-fitting}
can be formally written as  
\begin{equation}
\label{eq:Parabolic.Sys_simplified}
\begin{cases}
\partial_t \Gamma=\mathcal{E}(\Gamma), 
\\
\mathcal{B} \cdot \Gamma^{T}|_{[0,T]\times \partial[0,1] }=\Phi, 
\\ 
\mathcal{C} \cdot \Gamma^{T} |_{t=0}=\phi,
\end{cases}
\end{equation} 
where $\Gamma^{T}$ denotes the transpose of the vector $\Gamma$, $\mathcal{E}(\Gamma)$ denotes a spatial (elliptic) differential operator $\mathcal{E}$ acting on the vector $\Gamma=\Gamma(t,y)$, $\mathcal{B}=\mathcal{B}(y^\ast,\partial_y)$ denotes the matrix-valued boundary operator, 
$\Phi=\Phi(y^\ast)$, 
and $\mathcal{C}$ denotes the matrix encoding the initial conditions (an identity matrix) with $\phi=\phi(y)$. 
Each component $\Gamma_{j}$ of the vector $\Gamma=(\Gamma_1,\ldots,\Gamma_{n(2q-1)})$  
is a real-valued function that belongs to one of the following two sets:  
$$
G_{\ast}:=\{g_\nu^\mu: \nu=1, \ldots, q, ~ \mu=1, \ldots, n, \text{ where } g_\nu=(g_\nu^1,\ldots, g_\nu^n) \}
$$ 
or  
$$
H_{\ast}:=\{h_\nu^\mu: \nu=1, \ldots, q-1, ~ \mu=1, \ldots, n, \text{ where } h_\nu=(h_\nu^1,\ldots, h_\nu^n) \}. 
$$ 
Moreover, let us define 
$\Gamma^{g}=(\Gamma_{\sigma(1)},\ldots,\Gamma_{\sigma(nq)})$ and $\Gamma^{h}=(\Gamma_{\sigma(nq+1)},\ldots,\Gamma_{\sigma(n(2q-1))})$, 
where each $\Gamma_{\sigma(\ell)}\in G_\ast$ for $\ell=1,\ldots,nq$, and $\Gamma_{\sigma(\ell)}\in H_\ast$ for $\ell=nq+1,\ldots,n(2q-1)$. 
Here, $\sigma$ denotes a permutation in the symmetric group of degree $n(2q-1)$. 
Then, each component $\Gamma_j$ of the vector $\Gamma$ 
in the parabolic system $\partial_t \Gamma=\mathcal{E}(\Gamma)$ in \eqref{eq:Parabolic.Sys_simplified} can be written as one of the following two equations:  
\begin{align}
\label{eq:d_t(Gamma_j)=E(Gamma)}
\begin{cases}
\partial_t\Gamma_j=E_j(\partial_y^{2k}\Gamma^{g},\ldots,\Gamma^{g}), ~~~ \text{ if } ~ \Gamma_j\in G_\ast 
\text{,}
\\
\text{ or }
\\
\partial_t\Gamma_j=E_j(\partial_y^{2}\Gamma^{h},\ldots,\Gamma^{h}), 
~~~~ \text{ if } ~ \Gamma_j\in H_\ast 
\text{.}
\end{cases}
\end{align}   
\end{rem}

In the remainder of this subsection, we define the compatibility conditions for the nonlinear parabolic system \eqref{AP-q-even-fitting}$\sim$\eqref{BC-q-even-fitting}. This definition requires an explicit formulation of the boundary operator, denoted as $\mathcal{B}$, which is derived from the boundary conditions specified in \eqref{BC-q-even-fitting}. The explicit structure of $\mathcal{B}$ will also play a crucial role in verifying the complementary conditions for the associated coupled linear parabolic systems, introduced in \eqref{eq:higher-order-linear-g_l}$\sim$\eqref{eq:second-order-linear-h_l} in \S\ref{Sec:linear}, which is essential for applying Solonnikov's theory. 
Due to orientation reversals on certain parts of the network, the matrix representation of $\mathcal{B}$ depends on whether the number of spline components, $q$ is even or odd. To establish the required sufficient conditions, we undertake a detailed analysis of the matrix associated with $\mathcal{B}$. In what follows, we treat the cases of even and odd $q$ separately.

%%%%%%%%%%%%%%%%%%%%%%%%%%%%%%%%%%%%%%%%%%%%

\vspace{2cm}  %%%This command is used to prevent the figure below from overlapping.

\begin{figure}[h]
\setlength{\unitlength}{0.56 mm}
\begin{center}
\begin{picture}(120, 75)

{\color{red}
\put(0,25){\circle*{2}} %%%$p_0$
\put(35,65){\circle*{2}} %%%$g_1(1)$
\put(80,25){\circle*{2}} %%%$g_2(0)$
\put(35,75){\circle*{2}} %%%$p_1$
\put(80,15){\circle*{2}} %%%$p_2$
\put(120,60){\circle*{2}} %%%$p_3$
}

%%%5{\color{red}
\put(-9,17){$p_0=g_1(t,0)$}
\put(27,55){$g_1(t,1)$}
\put(70,31){$g_2(t,0)$}
\put(100,65){$p_3=g_3(t,1)$}
%%%%%%%%%%
\put(25,79){$p_1=h_{1}(t,0)$}
\put(71,09){$p_2=h_2(t,1)$}
%%%%%%%%%%
\put(15, 50){$g_1$}
\put(47, 41){$g_2$}
\put(102, 30){$g_3$}
%%%%%%%%%%
\put(40,66){$h_1$}
\put(84,21){$h_2$}
\put(85,-05){$M$}

%%%%plot curves from p_0 to g_1(1)
\qbezier(0,25)(2,64)(35,65)
%%%%plot curves from  g_1(1) to p_1
\qbezier(35,65)(35,67)(35,75)
%%%%plot curves from g_1(1) to g_2(0)
\qbezier(35,65)(54,64)(55,51)
\qbezier(55,51)(60,26)(80,25)
%%%%plot curves from g_2(x_2) to p_2
\qbezier(80,25)(80,22)(80,15)
%%%%plot curves from g_2(x_2) to p_3
\qbezier(80,25)(95,27)(100,40)
\qbezier(100,40)(105,57)(120,60)
%%%%%%%%%%%line manifold 1
\qbezier(-15,0)(-10,67)(10,85)
%%%%%%%%%%%line manifold 2
\qbezier(10,85)(75,120)(140,80)
%%%%%%%%%%%line manifold 3
\qbezier(140,80)(122,47)(120,-8)
%%%%%%%%%%%line manifold 4
\qbezier(120,-8)(75,20)(-15,0) 

%%%%%%%%%%%%%%%%%%%%%%%5
%%%%%%%%%%%%%%%%%%%%%%%%%
%%%%plot at p_0 
\qbezier(-3,26)(0,25)(3,24)
%%%%plot at p_1 
\qbezier(32,75)(35,75)(38,75)
%%%%plot at p_2 
\qbezier(77,15)(80,15)(83,15)
%%%%plot at p_3
\qbezier(119,57)(120,60)(121,63)

%%%%%%%%%%%%%%%%%%%%%%%5
%%%%%%%%%%%%%%%%%%%%%%%%%
%%%line at p_0 
\qbezier(-2.4,25.8)(-2.2,24.4)(-2.0,23)
\qbezier(-0.0,24.8)(0.2,23.4)(0.4,22)
\qbezier(2.4,23.8)(2.6,22.4)(2.8,21)
%%%%%%%plot at p_1 
\qbezier(32.4,76)(32.6,77)(33,78)
\qbezier(34.4,76)(34.6,77)(35,78)
\qbezier(36.6,76)(36.8,77)(37.2,78)
%%%%%%plot at p_2 
\qbezier(77.8,14)(77.6,13)(77.4,12)
\qbezier(79.8,14)(79.6,13)(79.4,12)
\qbezier(81.8,14)(81.6,13)(81.4,12)
%%%%%%plot at p_3
\qbezier(120.5,57.7)(121,57.4)(121.5,57.1)
\qbezier(120.7,59.3)(121.2,59.0)(121.7,58.7)
\qbezier(121.5,61.5)(122,61.2)(122.5,60.9)

{\color{blue}
%%%%%%%%plot curves from p_0 to g_1(1)
\qbezier(13,61)(16,61)(19,61)
\qbezier(17,55)(18,58)(19,61)
%%%%%%%%plot curves from g_1(1) to g_2(0)
\qbezier(60,41)(58,44)(56,47) 
\qbezier(56,40)(56,43)(56,47) 
%%%%%%%%plot curves from g_2(0) to p_3
\qbezier(100,34)(100,37)(100,40)
\qbezier(96,38)(98,39)(100,40)
%%%%%%plot curves from g_1(1) to p_1
\qbezier(35,69)(34,70)(33,71)
\qbezier(35,69)(36,70)(37,71)
%%%%%%%%plot curves from  g_2(0)  to p_2
\qbezier(80,19)(79,20)(78,21)
\qbezier(80,19)(81,20)(82,21)
}

\end{picture}
\vspace{0.3cm}
\caption{The orientation of $g$ and $h$ as $q=3$.} 
\end{center}
\end{figure}

%%%%%%%%%%%%%%%%%%%%%%%%%%%%%% 

\bigskip 
{\bf $\bullet$ When $q\ge 2$ is even.} 
\bigskip

Let  
\begin{align}
\label{eq:Gamma_(g,h)}
\Gamma=(\Gamma_{1}, \ldots, \Gamma_{n(2q-1)}):=(g_{1}, h_{1}, g_{2}, h_{2}, \ldots, g_{q}) \in\mathbb{R}^{n (2q-1)} 
\text{,}
\end{align}
where 
\begin{align}
\label{eq:(g,h)}
g_{\nu}=(g^{1}_{\nu}, \ldots, g^{n}_{\nu}), ~~ \nu=1, \ldots, q ,
~~~~ 
\text{ and }  
~~~~ 
h_{\nu}=(h^{1}_{\nu}, \ldots, h^{n}_{\nu}), ~~ 
\nu=1, \ldots, q-1.
\end{align} 
The boundary conditions in \eqref{BC-q-even-fitting} can be formally written as 
\begin{align*} 
\sum\limits_{j=1}^{n(2q-1)}\mathcal{B}_{ij}\left( y^\ast, \partial_{y} \right) \Gamma_{j}(t, y^{\ast})=\Phi_{ i }( y^{\ast}), 
~~  i \in \{1, \ldots, kn(2q-1)\}, 
~~ y^{\ast} \in \{0,1\}, 
\end{align*} 
or simply as 
\begin{align*} 
\mathcal{B}(y^{\ast}, \partial_{y}) \Gamma^{T}(t, y^{\ast})=\Phi(y^{\ast}) , ~~ y^{\ast} \in \{0,1\}, 
\end{align*} 
where the boundary operators $\mathcal{B}$, represented as diagonal block matrices, are denoted by $\mathcal{B}(y^{\ast}, \partial_{y})$ to highlight their dependence solely on 
$y^{\ast}$ and the differential operator $\partial_y$:  
\begin{equation*}
\label{matrix-B-bdry-q-even-fitting}
\begin{cases}
\mathcal{B}(y^\ast=0, \partial_{y})=
\text{diag}(B_0, \overbrace{Id_{n\times n}, B_2, Id_{n\times n}, B_2, \ldots, Id_{n\times n}}^{\frac{q}{2} \text{ many of }  Id_{n\times n} 
\text{ and } \frac{q}{2}-1 \text{ many of } B_{2}}, B_0), 
\\
\mathcal{B}(y^\ast=1, \partial_{y})
=\text{diag}(\overbrace{B_2, Id_{n\times n}, B_2, Id_{n\times n}, \ldots, Id_{n\times n}, B_2}^{\frac{q}{2} \text{ many of } B_{2}  \text{ and } \frac{q}{2}-1 \text{ many of } Id_{n\times n}}), 
\end{cases}
\end{equation*} 
with 
\begin{align}
\label{matrix-B_0}
B_{0}
=& B_0(\partial_{y}) 
=\begin{pmatrix}
Id_{n \times n}\\ 
\partial_{y} \cdot Id_{n \times n} \\  
\vdots  \\
\partial_{y} ^{k-1} \cdot Id_{n \times n}\\ 
\end{pmatrix}\text{,}
\end{align} 
\begin{align}  %%%y^{\ast}, 
\label{matrix-B_2^y-ast-fitting}
B_2=B_2(\partial_{y})
=\begin{pmatrix}   
Id_{n \times n}& -Id_{n \times n} & 0 \\ 
0 &Id_{n \times n}&-Id_{n \times n} \\ 
\partial_{y} \cdot Id_{n \times n}& 0 & \partial_{y}\cdot Id_{n \times n} \\ 
\partial_{y}^{2}\cdot Id_{n \times n}& 0 & -\partial_{y}^{2} \cdot Id_{n \times n} \\ 
\vdots  &\vdots  &\vdots \\ 
 \partial_{y}^{2k-2} \cdot Id_{n \times n}& 0 & - \partial_{y}^{2k-2} \cdot Id_{n \times n} \\ 
\partial_{y}^{2k-1}\cdot Id_{n \times n}& 
\frac{(-1)^{k+1}}{\sigma^2} \partial_{y} \cdot Id_{n \times n} &\partial_{y}^{2k-1} \cdot Id_{n \times n} \\ 
\end{pmatrix}; 
\end{align} 
and $\Phi=(\Phi_1, \ldots, \Phi_{nk(2q-1)})$ is a constant vector, given by
\begin{equation}
\label{definition-Phi-1-fitting}
\begin{cases}
\big( \Phi_1(0), \ldots, \Phi_{nk}(0) \big) 
=\big(p_0, b^{1}_{1,x_0}, \ldots, b^{k-1}_{1,x_0} \big) \in\mathbb{R}^{nk},
\\
\big( \Phi_{(2q-1)nk-nk+1}(0), \ldots, \Phi_{(2q-1)nk}(0) \big) 
\\ 
~~~~~~~~~~~~~~~~~~~~~~~~~~~ 
=\bigg(p_q, (-1)^{(q-1) \cdot 1} b^{1}_{q,x_q}, \ldots, (-1)^{(q-1) \cdot (k-1)} b^{k-1}_{q,x_q} \bigg) \in\mathbb{R}^{nk},
\\
\big( \Phi_{nk+(2nk+n)\nu+1}(0), \ldots, \Phi_{k+(2nk+n)\nu+n}(0) \big)=p_{\nu+1}\in\mathbb{R}^{n}, ~~~~~~~ \nu \leq q-1, 
\\
\big( \Phi_{2\nu nk+n(\nu-1)+1}(1), \ldots, \Phi_{2\nu nk+n(\nu-1)+n}(1) \big)=p_{\nu}\in\mathbb{R}^{n}, ~~~~~~~~~ \nu \leq q-1, 
\\
\Phi_i(y^{\ast})=0, ~~~~~~~~~~~~~~~~~~~~~~~~~~~~~~~~~~~~~~~~~~~~~~~~~~~~~~~~ \text{ for the rest of } i, ~ y^{\ast}.
\end{cases}
\end{equation} 
The notation $\Phi_i(y^{\ast})$ serves as a simplified form of $\Phi_i(t, y^{\ast})$, since each $\Phi_i(t, y^{\ast})$ is independent of $t$. 
Similarly, $B_2(y^\ast,\partial_y)=B_2(\partial_y)$ for any $y^\ast\in\{0,1\}$, since each 
$B_2(y^\ast,\partial_y)$ is independent of $y^\ast$.    
It's notable that the dimensions of the matrices  
$B_0$, $B_2$, and $\mathcal{B}$, introduced above, are $kn \times n$, $2nk \times 3n$, and $2nkq \times 2nq$, respectively.

%%%%%%%%%%%%%%%%%%%%%%%%%%%%%%%%%%%%%%%%%%%%%%%%%%%%%%%

\vspace{0.1  cm}

\begin{figure}[h]
\centering
\begin{tikzpicture}[scale=0.8]
  \matrix (magic) [matrix of nodes,left delimiter=(,right delimiter=)]
  {
\node{$B_0$};\draw(-0.5,-0.25) rectangle(0.5,0.5); &  & 0 \\
 & \node{$Id_{n\times n}$};\draw(-0.5,-0.25) rectangle(0.5,0.5); & \\
    0& & \node{$B_0$};\draw(-0.5,-0.25) rectangle(0.5,0.5); \\
  };
\end{tikzpicture}
\begin{tikzpicture}[scale=0.8]
  \matrix (magic) [matrix of nodes,left delimiter=(,right delimiter=)]
  {
\node{$B_0$};\draw(-0.5,-0.25) rectangle(0.5,0.5); &  & &  & 0 \\
&\node{$Id_{n\times n}$};\draw(-0.5,-0.25) rectangle(0.5,0.5); & & &  \\
  &  &\node{$B_2$};\draw(-1.5,-0.25) rectangle(1.5,0.5);
\draw[dashed] (0.5,0.5)--(0.5,0.25);  \draw[dashed] (0.5, 0.15)--(0.5,0.03);     \draw[dashed] (0.5,-0.25)--(0.5,-0.2);  
\draw[dashed] (-0.5,0.5)--(-0.5,0.25);  \draw[dashed] (-0.5, 0.15)--(-0.5,0.03);     \draw[dashed] (-0.5,-0.25)--(-0.5,-0.2);  
& & \\
    & & & \node{$Id_{n\times n}$};\draw(-0.5,-0.25) rectangle(0.5,0.5); & \\
    0& & && \node{$B_0$};\draw(-0.5,-0.25) rectangle(0.5,0.5);  \\
  };
\end{tikzpicture}
\caption{The matrix $\mathcal{B}(y^\ast=0, \partial_{y})$} when $q=2$ or $q=4$. 
\end{figure}

%%%%%%--------------------------------

\vspace{1.5 cm}

\begin{figure}[h]
\centering
\begin{tikzpicture}[scale=0.8]
  \matrix (magic) [matrix of nodes,left delimiter=(,right delimiter=)]
  {
\node{$B_2$};\draw(-1.5,-0.25) rectangle(1.5,0.5);
\draw[dashed] (0.5,0.5)--(0.5,0.25);  \draw[dashed] (0.5, 0.15)--(0.5,0.03);     \draw[dashed] (0.5,-0.25)--(0.5,-0.2);  
\draw[dashed] (-0.5,0.5)--(-0.5,0.25);  \draw[dashed] (-0.5, 0.15)--(-0.5,0.03);     \draw[dashed] (-0.5,-0.25)--(-0.5,-0.2);    \\
  };
\end{tikzpicture}
\begin{tikzpicture}[scale=0.8]
  \matrix (magic) [matrix of nodes,left delimiter=(,right delimiter=)]
  {
\node{$B_2$};\draw(-1.5,-0.25) rectangle(1.5,0.5);
\draw[dashed] (0.5,0.5)--(0.5,0.25);  \draw[dashed] (0.5, 0.15)--(0.5,0.03);     \draw[dashed] (0.5,-0.25)--(0.5,-0.2);  
\draw[dashed] (-0.5,0.5)--(-0.5,0.25);  \draw[dashed] (-0.5, 0.15)--(-0.5,0.03);     \draw[dashed] (-0.5,-0.25)--(-0.5,-0.2);  
& & 0\\
    & \node{$Id_{n\times n}$};\draw(-0.5,-0.25) rectangle(0.5,0.5); & \\
    0& &\node{$B_2$};\draw(-1.5,-0.25) rectangle(1.5,0.5);
\draw[dashed] (0.5,0.5)--(0.5,0.25);  \draw[dashed] (0.5, 0.15)--(0.5,0.03);     \draw[dashed] (0.5,-0.25)--(0.5,-0.2);  
\draw[dashed] (-0.5,0.5)--(-0.5,0.25);  \draw[dashed] (-0.5, 0.15)--(-0.5,0.03);     \draw[dashed] (-0.5,-0.25)--(-0.5,-0.2);   \\
  };
\end{tikzpicture}
\caption{The matrix $\mathcal{B}(y^\ast=1, \partial_{y})$} when $q=2$ or $q=4$. 
\end{figure}

%%%%%%%%%%%%%%%%%%%%%%%%%%%%%%%%%%%%%%%%%%%%%%%%%%%%

\bigskip 

{\bf $\bullet$ When $q\ge3$ is odd.} 
\bigskip 

As $q\ge3$ is odd, the boundary conditions in 
\eqref{BC-q-even-fitting} can be similarly written as 
\begin{align*} 
\mathcal{B}(y^{\ast}, \partial_{y})  \Gamma^{T}(t, y^{\ast})=\Phi(y^{\ast}), ~~ y^{\ast} \in \{0,1\}, 
\end{align*} 
where the diagonal block matrices $\mathcal{B}(y^{\ast}, \partial_{y})$ 
are given by 
\begin{align*} 
\begin{cases} 
\mathcal{B}(y^\ast=0, \partial_{y})=&\text{diag}(B_0, \overbrace{B_2, Id_{n \times n}, B_2, Id_{n \times n}, \ldots, B_2, Id_{n \times n}}
^{\frac{q-1}{2}  \text{ many of }  B_2 \text{ and } \frac{q-1}{2}  \text{ many of }  Id_{n \times n} }), \\
\mathcal{B}(y^\ast=1, \partial_{y})=&\text{diag}(\overbrace{Id_{n \times n}, B_2 , Id_{n \times n}, 
B_2,  \ldots, Id_{n \times n}, B_2}^{\frac{q-1}{2}  \text{ many of }  Id_{n \times n} \text{ and } \frac{q-1}{2}  \text{ many of }  B_2 }, B_0),
\end{cases}
\end{align*} 
with $B_0$ and $B_2$ given in \eqref{matrix-B_0} and \eqref{matrix-B_2^y-ast-fitting} respectively,
and $\Phi=(\Phi_1,\ldots,\Phi_{kn(2q-1)})$ 
are constant vectors given by 
\begin{equation}
\label{definition-Phi-2-fitting}
\begin{cases}
\big( \Phi_1(0), \ldots, \Phi_{nk}(0) \big)=\big(p_0, 
b^{1}_{1,x_0}, \ldots, b^{k-1}_{1,x_0} \big)\in \mathbb{R}^{nk}, 
\\
\big( \Phi_{kn(2q-1)-nk+1}(1), \ldots, \Phi_{kn(2q-1)}(1) \big)
\\ 
~~~~~~~~~~~~~~~~~~~~~~~~~~~~~~~ 
=\bigg(p_q, (-1)^{q-1}  b^{1}_{q,x_q}, \ldots, (-1)^{(q-1)(k-1)} b^{k-1}_{q,x_q} \bigg) \in \mathbb{R}^{nk}, \\
\big( \Phi_{n+2nk\nu+n(\nu-1)+1}(0), \ldots, \Phi_{n+2nk\nu+n(\nu-1)+n}(0) \big)=p_{\nu}\in \mathbb{R}^{n}, ~~ \nu \leq q-1, 
\\
\big( \Phi_{(2nk+n)\nu+1}(1), \ldots, \Phi_{(2nk+n)\nu+n}(1) \big)=p_{\nu+1}\in \mathbb{R}^{n}, ~~~~~~~~~~~~~~ \nu \leq q-1, 
\\
\Phi_i(y^{\ast})=0, ~~~~~~~~~~~~~~~~~~~~~~~~~~~~~~~~~~~~~~~~~~~~~~~~~~~~~~~~ \text{ for the rest of } i, ~ y^{\ast}. 
\end{cases}
\end{equation}

%%%%%%%%%%%%%%%%%%%%%%%%%%%%%%%%%%%%%%%%%%%%%%%%%%%%%%%%%%%%%%%%%%

\vspace{1.5 cm}

\begin{figure}[h]
\centering
\begin{tikzpicture}[scale=0.4]
  \matrix (magic) [matrix of nodes,left delimiter=(,right delimiter=)]
  {
\node{$B_0$};\draw(-0.5,-0.25) rectangle(0.5,0.5); &  & &  & 0 \\
&  \node{$B_2$};\draw(-1.5,-0.25) rectangle(1.5,0.5);
\draw[dashed] (0.5,0.5)--(0.5,0.25);  \draw[dashed] (0.5, 0.15)--(0.5,0.03);     \draw[dashed] (0.5,-0.25)--(0.5,-0.2);  
\draw[dashed] (-0.5,0.5)--(-0.5,0.25);  \draw[dashed] (-0.5, 0.15)--(-0.5,0.03);     \draw[dashed] (-0.5,-0.25)--(-0.5,-0.2);  & &  &  \\
    &  & \node{$Id_{n \times n}$};\draw(-0.5,-0.25) rectangle(0.5,0.5);   &  &     \\
&  &  & \node{$B_2$};\draw(-1.5,-0.25) rectangle(1.5,0.5);
\draw[dashed] (0.5,0.5)--(0.5,0.25);  \draw[dashed] (0.5, 0.15)--(0.5,0.03);     \draw[dashed] (0.5,-0.25)--(0.5,-0.2);  
\draw[dashed] (-0.5,0.5)--(-0.5,0.25);  \draw[dashed] (-0.5, 0.15)--(-0.5,0.03);     \draw[dashed] (-0.5,-0.25)--(-0.5,-0.2);  &  \\
    0 &  &   &  & \node{$Id_{n \times n}$};\draw(-0.5,-0.25) rectangle(0.5,0.5);   \\
  };
\end{tikzpicture}
\caption{The matrix $\mathcal{B}(y^\ast=0, t, \partial_{y}, \partial_{t})$ when $q=5$.} 
\end{figure}

\vspace{1.2  cm}

\begin{figure}[h]
\centering
\begin{tikzpicture}[scale=0.4]
  \matrix (magic) [matrix of nodes,left delimiter=(,right delimiter=)]
  {
\node{$Id_{n \times n}$};\draw(-0.5,-0.25) rectangle(0.5,0.5);  & &  &  & 0 \\
   &\node{$B_2$};\draw(-1.5,-0.25) rectangle(1.5,0.5);
\draw[dashed] (0.5,0.5)--(0.5,0.25);  \draw[dashed] (0.5, 0.15)--(0.5,0.03);     \draw[dashed] (0.5,-0.25)--(0.5,-0.2);  
\draw[dashed] (-0.5,0.5)--(-0.5,0.25);  \draw[dashed] (-0.5, 0.15)--(-0.5,0.03);     \draw[dashed] (-0.5,-0.25)--(-0.5,-0.2);  &  &     &   \\
 &  & \node{$Id_{n \times n}$};\draw(-0.5,-0.25) rectangle(0.5,0.5);  &  & \\
      &   &  & \node{$B_2$};\draw(-1.5,-0.25) rectangle(1.5,0.5);
\draw[dashed] (0.5,0.5)--(0.5,0.25);  \draw[dashed] (0.5, 0.15)--(0.5,0.03);     \draw[dashed] (0.5,-0.25)--(0.5,-0.2);  
\draw[dashed] (-0.5,0.5)--(-0.5,0.25);  \draw[dashed] (-0.5, 0.15)--(-0.5,0.03);     \draw[dashed] (-0.5,-0.25)--(-0.5,-0.2);& 
\\
0 &  & &  & \node{$B_0$};\draw(-0.5,-0.25) rectangle(0.5,0.5); 
\\
  };
\end{tikzpicture}
\caption{The matrix $\mathcal{B}(y^\ast=1, t, \partial_{y}, \partial_{t})$ when $q=5$.} 
\end{figure}

%%%%%%%%%%%%%%%%%%%%%%%%%%%%%%%%%%%%%%%%%%%%%%%%%%%%%%%%%

\begin{defi}
[The Compatibility Conditions of Order $\ell$
to the Parabolic System for Networks in 
\eqref{AP-q-even-fitting} and \eqref{BC-q-even-fitting}]
\label{def:cc-order-p-fitting} 
\text{ } 

Let $\ell \in \mathbb{N}_0$, $\alpha_j\in(0,1)$, and $\varrho_j=\ell+\alpha_j$, $j\in\{1,2\}$. 
We say that the initial datum $(g_0,h_0)$ with the regularity 
$g_{l,0} \in C^{2k+\varrho_1}([0,1])$ and 
$h_{l,0} \in C^{2+\varrho_2}([0,1])$  
fulfills the compatibility conditions of order $\ell$ to the nonlinear parabolic system 
\eqref{AP-q-even-fitting} and \eqref{BC-q-even-fitting}  
if $(g_0,h_0)$ satisfies 
all of the following differential equations at the boundaries 
$\{t=0\}\times \{0,1\}$, 
\begin{align} 
\label{compatibility-condition-fitting}
\sum\limits_{j=1}^{n(2q-1)}\mathcal{B}_{ij}\left( y^\ast, \partial_{y}\right) \partial_t^p  \Gamma_{j}(t, y^{\ast}) \bigg{| }_{t=0}= 
\begin{cases}
\Phi_i(y^{\ast}), ~ p=0, 
\\
0, ~~~~~~~~ p\ge 1, 
\end{cases}
~ i\in\{1, \ldots, kn(2q-1)\}  
\text{,}
\end{align} 
which are characterized as below: 
\begin{itemize} 
\item[(i)] 
Each component $\Gamma_{j}$ of the vector $\Gamma=(\Gamma_1,\ldots,\Gamma_{n(2q-1)})$   
is a real-valued function that belongs to either the set $G_{\ast}$ or the set $H_{\ast}$. 
For each fixed $i\in\{1, \ldots, kn(2q-1)\}$ 
appearing in \eqref{compatibility-condition-fitting}, the associated integer $p$ must satisfy the condition 
\begin{align}
\label{highest-order-diferential-operator}
\max_{j} \{t_j\cdot p+\sigma_i(j) \} \le \ell
\text{.}
\end{align}
Here, $\sigma_{i}(j):=\beta_{ij}- t_j$, 
where 
\begin{align*}
\begin{cases}
t_j=2k, ~~~ \text{ if } ~~~ \Gamma_j\in G_{\ast}, 
\\
t_j=2, ~~~~ \text{ if } ~~~ \Gamma_j\in H_{\ast},
\end{cases}
\end{align*} 
and $\beta_{ij}$ denotes the highest order of the spatial derivative $\partial_y$ appearing in the $(i,j)$-entry of the boundary operator matrix $\mathcal{B}$. 
\item[(ii)] 
The equations in \eqref{compatibility-condition-fitting} should be understood as differential equations in the spatial variable $y$. They are derived by replacing each time derivative $\partial_t$ with a spatial derivative $\partial_y$, using the relations provided in \eqref{AP-q-even-fitting}, or equivalently, by applying the formal identities in \eqref{eq:d_t(Gamma_j)=E(Gamma)}.
\end{itemize}
\end{defi}

\begin{defi}
[The Geometric Compatibility Conditions of Order $\ell$ to 
the Intrinsic Geometrical Parabolic System for Networks 
in \eqref{eq:E_s-flow-fitting}$\sim$
\eqref{eq:BC-higher-order-5-fitting} ] 
\label{def:geom-cc-order-p-fitting}
Let $x_{l}=l$, $l \in \{0, \ldots, q\}\subset\mathbb{Z}$. 
Suppose $\ell \in \mathbb{N}_0$, $\alpha_j\in(0,1)$, and $\varrho_j=\ell+\alpha_j$, $j\in\{1,2\}$. 
We say that the initial datum 
$(\gamma_0, \chi_{1,0}, \ldots, \chi_{q-1,0})$ with the regularity $\gamma_{l,0} \in C^{2k+\varrho_1}([x_{l-1},x_l])$ and 
$\chi_{l, 0} \in C^{2+\varrho_2}([x_{l-1},x_l])$ 
fulfills the geometric compatibility conditions of order $\ell$ to the parabolic system \eqref{eq:E_s-flow-fitting}$\sim$\eqref{eq:BC-higher-order-5-fitting} if the initial datum $(g_0, h_0)$, derived from converting $(\gamma_0, \chi_{1,0}, \ldots, \chi_{q-1,0})$ by using \eqref{eq:parametrization-gamma} and \eqref{eq:parametrization-chi}, satisfies the compatibility conditions of order $\ell$ in Definition \ref{def:cc-order-p-fitting}. 
\end{defi}

\begin{rem} 
[The Geometric Compatibility Conditions of Order Zero to 
\eqref{eq:E_s-flow-fitting}$\sim$\eqref{eq:BC-higher-order-5-fitting}] 
\label{remark-def:geo.cc.order-0-fitting} 
 
Suppose $x_{l}=l$, $l \in \{0, \ldots, q\}\subset\mathbb{Z}$, $\alpha_j\in(0,1)$, $j\in\{1,2\}$, as given previously,  
and $(\gamma_0, \chi_{1,0}, \ldots, \chi_{q-1,0})$ is the initial datum with the smoothness $\gamma_0\in C^{2k-2}([x_0, x_q])$, $\gamma_{l,0} \in C^{2k+\alpha_1}([x_{l-1}, x_{l}])$, $\chi_{l, 0} \in C^{2+\alpha_2}([x_{l-1}, x_{l}])$. 
\begin{enumerate}
\item[(i)] 
The initial datum $(\gamma_0, \chi_{1,0}, \ldots, \chi_{q-1,0})$ is said to fulfill the compatibility conditions of order zero to the parabolic system 
\eqref{eq:E_s-flow-fitting}$\sim$\eqref{eq:BC-higher-order-5-fitting} if the following holds: 
\begin{equation*} 
\begin{cases} 
\mathcal{L}_x^{2k}(\gamma_{0})(x_{l}) =0, ~~~~~~~~~~~~~~~~~~~~~~~~~~~~~~~~~~~~~~~~~~~~~~~~~~~~~~~~~~~ 
l \in \{0, q\}, 
\\
\sigma^2 D_{x}\partial_{x}\chi_{l,0}(x_{l-1})=0
~~~~~~~~~~~~~~~~~~~~~~~~~~~~~~~~~~~~~~~~~~~  
l \in \{1, \ldots, q-1\}, 
\\
\mathcal{L}_x^{2k}(\gamma_{l,0})(x_{l})
= \sigma^2 D_{x}\partial_{x}\chi_{l,0}(x_{l})
=\mathcal{L}_x^{2k}(\gamma_{l+1,0})(x_{l}), 
~~~~~~~~~ 
l \in \{1, \ldots, q-1\},
\\
\gamma_{0}(x_{l})=p_{l}, ~~~~~~~~~~~~~~~~~~~~~~~~~~~~~~~~~~~~~~~~~~~~~~~~~~~~~~~~~~~~~~~~~ 
l \in \{0,q\}, 
\\
\chi_{l, 0}(x_{l-1})=p_{l}, 
~~~~~~~~~~~~~~~~~~~~~~~~~~~~~~~~~~~~~~~~~~~~~~~~~~~ 
l \in \{1,\ldots, q-1\}, 
\\ 
D_{x}^{\mu-1}\partial_x\gamma_{l,0} (x^{\ast})=v^{\mu}_{x^{\ast}} ~~~~~~~~~~~~~~~~ 
1\leq \mu \leq k-1, ~ (l, x^{\ast}) \in \{(1, x_0), (q, x_q)\}, 
\\
\gamma_{l,0}(x_{l})=\chi_{l, 0}(x_{l})=\gamma_{l+1,0}(x_{l})
~~~~~~~~~~~~~~~~~~~~~~~~~~~~~~~~~~ 
l \in \{1, \ldots, q-1\},
\\
[\Delta_{l} D_{x}^{\mu-1}\partial_x\gamma_{0}]=0, 
~~~~~~~~~~~~~~~~~~~~~~~~~~~ 
1 \leq \mu \leq 2k-2, ~ l \in \{1, \ldots, q-1\},
\\ 
(-1)^{k} [\Delta_{l}  D_{x}^{2k-2}\partial_x\gamma_{0}]
+\frac{1}{\sigma^2}\partial_{x}\chi_{l,0}(x_{l})=0, 
~~~~~~~~~~~~~~~~~~~~~ 
l \in \{1, \ldots, q-1\}, 
\end{cases}
\end{equation*}
where the boundary datum $\{v^{\mu}_{x^{\ast}}\}$ are constant vectors. 

\item[(ii)] 
For the boundary conditions 
\eqref{eq:BC-higher-order-1-fitting}, \eqref{eq:BC-2-order-1-fitting}, and \eqref{eq:BC-higher-order-2-fitting}, 
the highest order of differential operator $\partial_y$ in $\mathcal{B}_{i j}$ in \eqref{compatibility-condition-fitting} is equal to zero, i.e., $\beta_{i j}=0$. 
Thus, in this case, $\sigma_i(j)=-t_j$, and the compatibility conditions of order zero in \eqref{highest-order-diferential-operator} holds when $p=0$ or $p=1$ in \eqref{compatibility-condition-fitting}. 
For the remaining boundary conditions, 
\eqref{eq:BC-higher-order-3-fitting}, \eqref{eq:BC-higher-order-4-fitting}, and \eqref{eq:BC-higher-order-5-fitting}, 
the highest order of differential operator $\partial_y$ in $\mathcal{B}_{i j}$ in \eqref{compatibility-condition-fitting} is greater than or equal to $1$, i.e., $\beta_{i j} \geq 1$. 
From the definition, the order $p$ of the time derivative $\partial_t$ in \eqref{compatibility-condition-fitting} must satisfy 
\begin{align*}
0 \geq \max_{j} \{t_j\cdot p+\sigma_i(j) \} \geq \max_{j} \{t_j\cdot (p-1)+1 \}
\end{align*} 
in order for the compatibility condition of order zero in \eqref{highest-order-diferential-operator} to be satisfied.  
This implies that $p=0$, since $t_j$ is equal to either $2k>0$ or $2$. 
\end{enumerate}

\end{rem}

%%%%%%%%%%%%%%%%%%%%%%%%%%%%%%
%%%%%%%%%%%%%%%%%%%%%%%%%%%%%%
\subsection{The Induced Linear Problem}
\label{Sec:linear} 

To establish local existence for the nonlinear parabolic system \eqref{eq:E_s-flow-fitting}$\sim$\eqref{eq:BC-higher-order-5-fitting}, we begin by analyzing the corresponding linearized system and verifying the conditions required for applying Solonnikov’s theory of linear parabolic systems (see Theorem \ref{Solonnikov-theorem} in \S\ref{subsec:Solonnikov}). 
These requirements include Assumption \ref{assumption:complementary-condition} and the compatibility conditions specified in Definition \ref{def:compatibility-conditions-general-equa}.
We recall that compatibility conditions of order $\ell$ are required to ensure that solutions to the linear parabolic system attain the corresponding level of regularity up to the initial time on compact domains.

Let 
\begin{align*}
Z^{T}:=
\left(C^{\frac{2k+\alpha_1}{2k}, 2k+\alpha_1} [0,T] \times [0,1]\right)^{q} 
\times \left(C^{\frac{2+\alpha_2}{2}, 2+\alpha_2} [0,T] \times [0,1]\right)^{q-1}   
\end{align*}
be the Banach space associated with the norm 
\begin{align*}
\norm{(g,h)}_{Z^{T} }:=\sum\limits^{q}_{l=1}\norm{g_{l}}_{C^{\frac{2k+\alpha_1}{2k},2k+\alpha_1}([0,T] \times [0,1])}
+\sum\limits^{q-1}_{l=1}\norm{h_{l}}_{C^{\frac{2+\alpha_2}{2},2+\alpha_2}([0,T] \times [0,1])} 
\end{align*} 
(see \cite[Theorems 1.8.2 and 1.8.6]{M98}). 
Let $(g_0,h_0)$ be obtained from modifying the orientation of 
$(\gamma_0, \chi_{1, 0}, \ldots, \chi_{q-1, 0})\in \Theta_{\mathcal{P}}$, as defined in \eqref{eq:parametrization-gamma} and 
\eqref{eq:parametrization-chi}. 
For any $T>0$, 
define 
$X^{T}_{(g_0,h_0)}\subset Z^{T}$ by  
\begin{align*}
X^{T}_{(g_0,h_0)}:=&\big{ \{ } (g,h)\in Z^{T}: 
g_{l}(0, \cdot)=g_{l,0}(\cdot), h_{l}(0, \cdot)=h_{l, 0}(\cdot), \forall\, l\big{ \} }, 
\end{align*} 
and endow $X^{T}_{(g_0,h_0)}$ with the norm  $\|\cdot\|_{X^{T}_{(g_0,h_0)}}=\|\cdot\|_{Z^{T} }$.  
Let the subset $B_K\subset X^{T}_{(g_0,h_0)}$ be  
$$
B_K=\{(g,h) \in X^{T}_{(g_0,h_0)}: \norm{(g,h)}_{X^{T}_{(g_0,h_0)}} \leq K\}, 
$$ 
which is a bounded, closed, and convex subset of the Banach space $Z^{T}$.

Since the orders of parabolicity for $g_l$ and $h_l$ of the nonlinear parabolic system \eqref{AP-q-even-fitting}$\sim$\eqref{BC-q-even-fitting} are different, we decompose the linearized system into two subsystems, allowing Solonnikov's framework (see Theorem \ref{Solonnikov-theorem}) to be applied appropriately, as detailed below:

\begin{equation}
\label{eq:higher-order-linear-g_l}
\begin{cases}
\partial_t g_{l}+ (-1)^{k}\cdot \partial_y^{2k} g_{l}
=G_l(\partial_y^{2k-1} \bar{g}_{l}, \ldots, \partial_y \bar{g}_{l}, \bar{g}_{l}), 
~~~~~~~~~~~~~~~~~~ 
l \in\{1, \ldots, q\}, 
\\ 
g_{l}(0,y)= g_{l,0}(y), 
~~~~~~~~~~~~~~~~~~~~~~~~~~~~~~~~~~~~~~~~~~~~~~~~~~~~~~~~~ 
l\in \{1, \ldots, q\}, 
\\
g_{1}(t,0)= p_{0}, 
~~~ 
g_{q}(t, 2\{\frac{q}{2}\})=p_{q},  
\\ 
\partial_{y}^{\mu}g_{1}(t,0)= 
b^{\mu}_{1, x_0},   
~~ 
\partial_{y}^{\mu}g_{q}(t, 2\{\frac{q}{2}\})= 
(-1)^{(q-1)\cdot\mu} b^{\mu}_{q, x_q}, 
~~~~~~ 
\mu \in \{1, \ldots, k-1\},  
\\ 
g_{l}(t, y^{\ast}_{l})
=g_{l+1}(t, y^{\ast}_{l}), 
~~~~~~~~~~~~~~~~~~~~~~~~
l \in \{1, \ldots, q-1\}, 
~
 y^{\ast}_l= 1- 2\{\frac{l+1}{2}\},  
\\ 
\partial_{y}^{\mu-1}g_{l}(t,  y^{\ast}_{l})
+ (-1)^{\mu-1} \partial_{y}^{\mu} g_{l+1}(t, y^{\ast}_{l})=0, 
\\ 
~~~~~~~~~~~~~~~~~~~~~~~~~~~
\mu \in \{1, \ldots, 2k-2\}, 
~ 
l \in \{1, \ldots, q-1\}, y^{\ast}_{l}= 1- 2\{\frac{l+1}{2}\},  
\\ 
\partial_{y}^{2k-1} g_{l}(t, y^{\ast}_{l})
+\partial_{y}^{2k-1} g_{l+1}(t, y^{\ast}_{l})=\frac{(-1)^{k}}{\sigma^2}\partial_{y} \bar{h}_{l}(t, y^{\ast}_l), 
\\ 
~~~~~~~~~~~~~~~~~~~~~~~~~~~~~~~~~~~~~~~~~~~~~~~~~~~~~ 
l \in \{1, \ldots, q-1\}, ~
 y^{\ast}_{l}= 1- 2\{\frac{l+1}{2}\}, 
\end{cases}
\end{equation} 
and 
\begin{equation}
\label{eq:second-order-linear-h_l}
\begin{cases}
\partial_t h_{l}= \sigma^2 \partial_y^{2} h_{l} 
+ \sigma^2 W_1(\partial_y \bar{h}_l, \bar{h}_l), 
\\ 
h_{l}(0,y)= h_{l,0}(y), 
\\
h_{l}(t, y^{\ast}_l)= p_{l}, 
~~~~~~~~~~~~~~~~~~~~~~~~~~~~~~~~~~~~~~~~~~~~~~~~~~~~~~~~~~~~~ 
y^{\ast}_{l}= 1- 2\{\frac{l}{2}\},   
\\
 h_{l}(t, y^{\ast}_l)=g_{l}(t, y^{\ast}_{l}),
~~~~~~~~~~~~~~~~~~~~~~~~~~~~~~~~~~~~~~~~~~~~~~~~~~~~ 
y^{\ast}_{l}= 1- 2\{\frac{l+1}{2}\}, 
\end{cases}
\end{equation} 
where $l \in \{1, \ldots, q-1\}$, $t\in[0,T]$, $y\in[0,1]$, $\{\frac{b}{a}\}:=\frac{b}{a}-[\frac{b}{a}]\in[0,1)$, $(\bar{g}, \bar{h}) \in X^{T}_{(g_0,h_0)}$. 

\bigskip

These two linear parabolic systems \eqref{eq:higher-order-linear-g_l} and \eqref{eq:second-order-linear-h_l} are coupled through their boundary data. In particular, the boundary datum in \eqref{eq:second-order-linear-h_l} involves $g_l(\cdot,y^\ast_l)$, obtained by restricting the solution of \eqref{eq:higher-order-linear-g_l} to the boundary $[0,T]\times \{y^\ast_l\}$, and is treated as a prescribed function. Despite this coupling, which imparts a nonlinear character to the overall system when considered as a whole, 
we still refer to \eqref{eq:higher-order-linear-g_l}$\sim$\eqref{eq:second-order-linear-h_l} as a coupled linear parabolic system, since the underlying parabolic equations themselves are linear.

To prove the existence of classical solutions to the coupled linear parabolic system, \eqref{eq:higher-order-linear-g_l}$\sim$\eqref{eq:second-order-linear-h_l} in this subsection, we first apply Solonnikov's theory to establish the existence of a solution $g_l$ to the linear parabolic system \eqref{eq:higher-order-linear-g_l} (see Lemma \ref{lem:linear-system-g-l}). With $g_l$ obtained, we then apply Solonnikov's theory again to prove the existence of a solution $h_l$ to the linear parabolic system \eqref{eq:second-order-linear-h_l} (see Lemma \ref{lem:linear-system-h-l}). 
A significant technical obstacle in this analysis in this subsection concerns the validation of complementarity conditions—a crucial requirement for ensuring well-posedness. The fulfillment of these conditions is fundamentally dependent upon the formulation of the prescribed boundary conditions.

Note that, in this approach, we do not obtain full higher regularity for $g_l$ and $h_l$; specifically, we only achieve 
$g_{l}\in C^{\frac{2k+\varrho_1}{2k}, 2k+\varrho_1}([0,T]\times [0,1])$, $\forall\, l$, for the linear equation \eqref{eq:higher-order-linear-g_l}, and $h_{l} \in C^{\frac{2+\varrho_2}{2}, 2+\varrho_2}([0,T]\times [0,1])$, $\forall\, l$, for the linear equation \eqref{eq:second-order-linear-h_l}, where $\varrho_1, \varrho_2\in (0,1)$. The existence of solutions to \eqref{eq:higher-order-linear-g_l} relies on verifying the compatibility conditions of order $\ell=[\varrho_1]=[\varrho_2]$. However, when $\ell$ is large, these sufficient conditions may fail to hold. This difficulty stems from the presence of the boundary term $\partial_y \bar{h}_l$ in the linearized parabolic system \eqref{eq:higher-order-linear-g_l}. 
For instance, when $\ell$ is large, verifying the compatibility conditions of order $\ell$ requires differentiating the boundary conditions in time, which leads to time-derivatives of $\partial_y \bar{h}_l$. Since $\bar{h}_l$ is a prescribed map, these time derivatives cannot be converted into the spatial-derivatives of $\partial_y \bar{h}_l$, as required by the compatibility conditions of order $\ell$ described in Definition \ref{def:cc-order-p-fitting}.

The following two lemmas aim to provide the existence of the coupled linear parabolic systems, \eqref{eq:higher-order-linear-g_l} and \eqref{eq:second-order-linear-h_l}.

\begin{lem}[Solutions to the linear parabolic system \eqref{eq:higher-order-linear-g_l}]
\label{lem:linear-system-g-l} 
Let $\lambda\in [0,\infty)$, $\sigma \in (0,\infty)$, $\alpha_j\in(0,1)$, $j \in \{1, 2\}$ such that $\alpha_1>k\alpha_2$. 
Suppose that $(g_0,h_0)$ is the initial datum for the nonlinear parabolic system, \eqref{AP-q-even-fitting}$\sim$\eqref{BC-q-even-fitting}, where  
$g_0=(g_{1,0},\cdots,g_{q,0})$,   
$h_0=(h_{1,0}, \ldots, h_{q-1,0})$, 
and each $g_{l,0}$ and $h_{l,0}: [0,1] \to \mathbb{R}^n$ satisfies the regularity conditions $g_{l,0}\in C^{2k+\alpha_1}([0,1])$ and $h_{l,0} \in C^{2+\alpha_2}([0,1])$ for all $l$. 
Moreover, assume that $(g_0, h_0)$ satisfies the  compatibility conditions of order zero, as defined in Definition \ref{def:cc-order-p-fitting}, for the nonlinear parabolic system \eqref{AP-q-even-fitting}$\sim$\eqref{BC-q-even-fitting}. 
Then, for any $T>0$ and $(\bar{g}, \bar{h}) \in X^{T}_{(g_0,h_0)}$, there exists a unique solution $g_l$ 
to the linear parabolic system \eqref{eq:higher-order-linear-g_l} such that 
$g_{l}\in C^{\frac{2k+\alpha_1}{2k}, 2k+\alpha_1}([0,T]\times [0,1])$,  
$\forall\, l \in \{1, \ldots, q\}$, and  
\begin{align}
\label{estimate-for-g_l}
\nonumber
\sum\limits^{q}_{l=1} & \norm{g_{l}}_{C^{\frac{2k+\alpha_1}{2k},2k+\alpha_1}([0,T] \times [0,1])} \leq \bar{C}_0\cdot \bigg{(} \sum^{q}_{l=1}
\norm{G_l(\partial_y^{2k-1} \bar{g}_{l},
\ldots,\bar{g}_{l})}_{C^{\frac{\alpha_1}{2k},\alpha_1}([0,T] \times [0,1])} \\ \nonumber
&~~~~~~~~~~~~~~~~~~~~~~~~~~~~~+\frac{1}{\sigma^2} \sum_{\substack{l=1, \cdots, q-1 \\ y^{\ast}_l =1-2 \big\{ \frac{l+1}{2} \big\} }} \|\partial_y \bar{h}_l (\cdot, y^{\ast}_l)\|_{C^{\frac{1+\alpha_1}{2k}}([0,T])} +|p_0|+|p_q|\\
&~~~~~~~~~~~~~~~~~~~~~~~~~~+\sum\limits_{\mu=1}^{k-1} \left( |b^{\mu}_{1,0}|+ |b^{\mu}_{q, 2 \{ \frac{q}{2} \}}|   \right)+\sum^{q-1}_{l=1}\Vert g_{l,0} \Vert_{C^{2k+\alpha_1}([0,1])}\bigg{ )},
\end{align}
where the constant $\bar{C}_0>0$ is independent of the non-homogeneous terms and the initial-boundary datum in the linear parabolic system, \eqref{eq:higher-order-linear-g_l}. 
\end{lem}

\begin{proof} 

The proof relies on the application of Solonnikov's theory, which requires verifying the sufficient conditions stated in Assumption \ref{assumption:complementary-condition} and Theorem \ref{Solonnikov-theorem}. 

Observe that the left-hand side of the linear parabolic system 
\eqref{eq:higher-order-linear-g_l} 
can be written as a matrix multiplication 
$\mathcal{L}(y, t, \partial_{y}, \partial_{t})g^{T},$ 
where $g=(g_{1}, \ldots, g_{q}) \in\mathbb{R}^{nq}$, $g_{\nu}=(g^{1}_{\nu}, \ldots, g^{n}_{\nu})$, 
$\forall\, \nu\in\{1, \ldots, q \}$,  
$\mathcal{L}(y, t, \partial_{y}, \partial_{t})
= \text{diag} \big(\mathcal{L}_{1 1}, \ldots, 
\mathcal{L}_{nq \, nq}\big)$ 
is a diagonal $nq\times nq$ matrix, 
and 
\begin{equation*}
\mathcal{L}_{\ell \ell}(y,t,\partial_{y},\partial_{t})=
\partial_{t}+(-1)^{k}\partial_{y}^{2k}, 
~~~~~~\ell=(l-1)n+j, ~~ j \in \{1, \ldots, n\},~~l \in \{1, \ldots, q\}.
\end{equation*}  
Thus, we obtain  
\begin{equation*}
\mathcal{L}_{\ell \ell}(y, t, \mathrm{i} \xi, p)=
p+\xi^{2k}, ~~~~~~~~~~~~~~~~~~~~~~~
\ell=(l-1)n+j, ~~ j \in \{1, \ldots, n\},~~l \in \{1, \ldots, q\}. 
\end{equation*} 
Note that as following the notation in Solonnikov theory \cite{Solonnikov65}, the principal part $\mathcal{L}_0$ of the matrix $\mathcal{L}$ coincides with $\mathcal{L}$, i.e., $\mathcal{L}_0=\mathcal{L}$, in this article. 
Hence, we let  
\begin{align*}
L(y, t, \mathrm{i}\xi, p):= \det\mathcal{L}_0(y, t, \mathrm{i} \xi, p)
=
\left(p+ \xi^{2k}\right)^{nq}
\text{,}
\end{align*} 
and   
\begin{align*}
\mathcal{\hat{L}}_0 (y, t, \mathrm{i} \xi, p)
:=L(y, t, \mathrm{i}\xi, p)\mathcal{L}_0^{-1}(y, t, \mathrm{i}\xi, p) 
= \text{diag} \big(\widehat{\mathcal{L}}_{1 1}^0, \ldots, \widehat{\mathcal{L}}_{nq \, nq}^0 \big) 
\text{,}
\end{align*} 
where 
\begin{equation*}
\mathcal{\hat{L}}_{\ell \ell}^0=
(p+ \xi^{2k})^{nq-1}, 
~~~~~~ \ell=(l-1)n+j, j \in \{1, \ldots, n\}, l \in \{1, \ldots, q\}.
\end{equation*}

\smallskip 

{\bf $\bullet$ The parabolicity conditions.} 
(see \cite[Page 8]{Solonnikov65} or Definition \ref{def:parabolicity-condition}.)

\smallskip 
 
From the definition, the roots of the polynomial 
$L(x, t, \mathrm{i}\xi, p)$ 
satisfy 
$$ p=-\xi^{2k} 
~~\text{ with multiplicity } nq, 
~~~~~~~~\xi \in \mathbb{R}. 
$$ 
Therefore, by definition, the system is uniformly parabolic.

\smallskip 

{\bf $\bullet$ The compatibility conditions of the initial datum at the boundaries.} 
(see Definition \ref{def:cc-order-p-fitting}; for general parabolic systems, see  
\cite[Page 98]{Solonnikov65} or Definition \ref{def:compatibility-conditions-general-equa}.)

\smallskip 

Since the boundary operators for $g$ in \eqref{eq:higher-order-linear-g_l} are linear and 
$g_{l}(0,\cdot)=\bar{g}_{l}(0,\cdot)=g_{l,0}$, $\bar{h}_{l}(0,\cdot)=h_{l,0}$, $\forall\, l$, 
we may conclude that $g_0$ also fulfills the compatibility conditions of order zero to the linear equation \eqref{eq:higher-order-linear-g_l}, if $(g_0,h_0)$ also fulfills the compatibility conditions of order zero to the nonlinear parabolic system in Definition 
\ref{def:cc-order-p-fitting}.

\smallskip 

{\bf $\bullet$ The polynomial $M^{+}$.} 
(see \cite[Page 11]{Solonnikov65} or Definition \ref{def:complementary-condition} and Remark \ref{Remark:Complementary-condition}.) 

\smallskip 

As $\text{Re} \text{ } p \geq 0$, and $p \ne 0$, we write 
\begin{align}\label{defintion:p}
p=|p|e^{\mathrm{i} \theta_{p}}, 
~~~~~~~~\text{ with }  
-\frac{1}{2}\pi \leq \theta_{p} \leq \frac{1}{2}\pi, 
\end{align}
and define 
\begin{align*} 
&\xi_{\mu}(y^{\ast}, p)
= d_p \cdot e^{\mathrm{i} \frac{\mu}{k}\pi},  
~~~~~~~~~~~~~~ \mu \in \{1, \ldots, 2k\}, 
\end{align*} 
where 
\begin{align} 
\label{definition:d_p}
&d_p= |p|^{\frac{1}{2k}}\cdot e^{\mathrm{i} \frac{\theta_p-\pi}{2k}}.
\end{align} 
Observe from its definition that the polynomial 
$L=L(y, t, \mathrm{i}\xi, p)$ in variable $\xi$ has 
$knq$ roots with  positive real parts 
and  $knq$ roots with negative real parts. 
The roots with positive real parts are 
$\{  \xi_{\mu}(y^{\ast}, p): \mu=1, \ldots, k \}$  
with multiplicity $nq$ for each fixed $\mu$.  
The roots with negative real parts are 
$\{ \xi_{\mu}(y^{\ast}, p): \mu=k+1, \ldots, 2k \}$  
with multiplicity $nq$ for each fixed $\mu$.  
Thus, we obtain  
\begin{align*}
L=\prod_{\mu=1}^{2k}\left(\xi-\xi_{\mu}(y^{\ast}, p)\right)^{nq} 
\text{.}
\end{align*}
From the definition of polynomial $M^{+}(y^{\ast}, \xi, p)$, 
we derive   
\begin{align*} 
M^{+}(y^{\ast}, \xi, p)
=\prod\limits_{\mu=1}^{k} \left(\xi-\xi_{\mu}(y^{\ast}, p)\right) ^{nq} . 
\end{align*}

\smallskip

{\bf $\bullet$ The complementary conditions along the boundaries $[0,T]\times \partial{I}$.} 
(see \cite[Page 11]{Solonnikov65} or Definition \ref{def:complementary-condition}.)

\smallskip 

The boundary conditions in \eqref{eq:higher-order-linear-g_l} can be rewritten as 
\begin{align*}
\mathcal{B}(y^{\ast}, \partial_{y}) 
g^{T}(t, y^{\ast})=\Phi^{\bar{h}}(t, y^{\ast}), 
~~~y^{\ast} \in \{0,1\}. 
\end{align*} 
The matrix $\mathcal{B}(y^{\ast}, \partial_{y})$ represents a simplified notation of 
$\mathcal{B}_0(y^{\ast}, t, \partial_{y}, \partial_{t})$ in Solonnikov's theory (e.g., see Definition \ref{def:complementary-condition}), when $\mathcal{B}_0$ is independent of $t$ and $\partial_t$, 
and is given by 
\begin{align*}
\begin{cases}
(i) \text{ when $q$ is even:} 
\\ 
\mathcal{B}(y^\ast=0, \partial_{y})=
\text{diag}(B_0, \overbrace{B_1, \ldots, B_1}^{\frac{q}{2}-1}, B_0), 
~~~
\mathcal{B}(y^\ast=1, \partial_{y})=\text{diag}(\overbrace{B_1, \ldots, B_1}^{\frac{q}{2}}), 
\\ 
\\ 
(ii) \text{ when $q$ is odd:}  
\\
\mathcal{B}(y^{\ast}=0, \partial_{y})=\text{diag}(B_0, \overbrace{B_1, \ldots, B_1}^{\frac{q-1}{2}}),
 ~~~~
\mathcal{B}( y^{\ast}=1, \partial_{y})=\text{diag}(\overbrace{B_1, \ldots, B_1}^{\frac{q-1}{2}}, B_0), 
\end{cases}
\end{align*}  
where   
\begin{align*} 
B_{0}
=& B_{0}(\partial_{y})
=\begin{pmatrix}
Id_{n \times n}
\\ 
\partial_{y} \cdot Id_{n \times n} 
\\  
\vdots  
\\
\partial_{y} ^{k-1} \cdot Id_{n \times n}
\\ 
\end{pmatrix}\text{,}
\\ 
B_1
=&B_{1}(\partial_{y})
=\begin{pmatrix}
Id_{n \times n}&-Id_{n \times n}
\\ 
\partial_{y}\cdot Id_{n \times n}&  \partial_{y} \cdot Id_{n \times n}
\\ 
\vdots & \vdots
\\ 
 \partial_{y}^{2k-3}\cdot Id_{n \times n}
 & \partial_{y}^{2k-3} \cdot Id_{n \times n}
 \\ 
 \partial_{y}^{2k-2}\cdot Id_{n \times n}& -\partial_{y}^{2k-2} \cdot Id_{n \times n}
 \\ 
 \partial_{y}^{2k-1}\cdot Id_{n \times n}& \partial_{y}^{2k-1} \cdot Id_{n \times n}
 \\ 
\end{pmatrix}.
\end{align*} 
Hence, $\mathcal{B}(y^{\ast}, \partial_{y})$ are diagonal block matrices. Moreover, $\Phi^{\bar{h}}$ is defined by
\begin{equation}\label{definition:Phi-bar-h-q-even}
\begin{cases}
\big( \Phi^{\bar{h}}_1(0), \ldots, \Phi^{\bar{h}}_{nk}(0) \big)=\big(p_0, 
b^{1}_{1,x_0}, \ldots, b^{k-1}_{1,x_0} \big)\in \mathbb{R}^{nk}, 
\\
\big( \Phi^{\bar{h}}_{kn(q-1)-nk+1}(0), \ldots, \Phi^{\bar{h}}_{kn(q-1)}(0) \big)
\\ 
~~~~~~~~~~~~~~~~~~~~~~~~~~~~~~~~~~~~~~ 
=\bigg(p_q, (-1)^{q-1}  b^{1}_{q,x_q}, \ldots, (-1)^{(q-1)(k-1)} b^{k-1}_{q,x_q} \bigg) \in \mathbb{R}^{nk}, 
\\
\big( \Phi^{\bar{h}}_{nk+n(2k-1)\nu+1}(0), \ldots, \Phi^{\bar{h}}_{nk+n(2k-1)\nu+n}(0) \big)=\frac{(-1)^k}{\sigma^2} \partial_y \bar{h}_l(t,0) \in \mathbb{R}^{n}, 
\\
~~~~~~~~~~~~~~~~~~~~~~~~~~~~~~~~~~~~~~~~~~~~~~~~~~~~~~~~~~~~~~~~~~~~~~~~~~~~
\nu \leq \frac{q}{2}-1, 1 \leq l \leq q-1, 
\\
\big( \Phi^{\bar{h}}_{n(2k-1)\nu+1}(1), \ldots, \Phi^{\bar{h}}_{n(2k-1)\nu+n}(1) \big)=\frac{(-1)^k}{\sigma^2} \partial_y \bar{h}_l(t,1) \in \mathbb{R}^{n}, 
~ 
\nu \leq \frac{q}{2}, 1 \leq l \leq q-1 ,
\\
\Phi^{\bar{h}}_i(y^{\ast})=0, 
~~~~~~~~~~~~~~~~~~~~~~~~~~~~~~~~~~~~~~~~~~~~~~~~~~~~~~~~~~~~~~~~~ 
\text{ for the rest of } i, ~ y^{\ast}, 
\end{cases}
\end{equation}
as $q$ is even, and 
\begin{equation}\label{definition:Phi-bar-h-q-odd}
    \begin{cases}
        \big( \Phi^{\bar{h}}_1(0), \ldots, \Phi^{\bar{h}}_{nk}(0) \big)=\big(p_0, 
b^{1}_{1,x_0}, \ldots, b^{k-1}_{1,x_0} \big)\in \mathbb{R}^{nk}, 
\\
\big( \Phi^{\bar{h}}_{kn(q-1)-nk+1}(1), \ldots, \Phi^{\bar{h}}_{kn(q-1)}(1) \big)
\\ 
~~~~~~~~~~~~~~~~~~~~~~~~~~~~~~~~~~~~~~~~~ 
=\bigg(p_q, (-1)^{q-1}  b^{1}_{q,x_q}, \ldots, (-1)^{(q-1)(k-1)} b^{k-1}_{q,x_q} \bigg) \in \mathbb{R}^{nk}, 
\\
\big( \Phi^{\bar{h}}_{nk+n(2k-1)\nu+1}(0), \ldots, \Phi^{\bar{h}}_{nk+n(2k-1)\nu+n}(0) \big)=\frac{(-1)^k}{\sigma^2} \partial_y \bar{h}_l(t,0) \in \mathbb{R}^{n}, 
\\
~~~~~~~~~~~~~~~~~~~~~~~~~~~~~~~~~~~~~~~~~~~~~~~~~~~~~~~~~~~~~~~~~~~~~~~~~~~~~~~~~ 
\nu \leq \frac{q-1}{2}, 1 \leq l \leq q-1 , 
\\
\big( \Phi^{\bar{h}}_{n(2k-1)\nu+1}(1), \ldots, \Phi^{\bar{h}}_{n(2k-1)\nu+n}(1) \big)=\frac{(-1)^k}{\sigma^2} \partial_y \bar{h}_l(t,1) \in \mathbb{R}^{n}, ~ \nu \leq \frac{q-1}{2}, 1 \leq l \leq q-1 ,
\\
\Phi^{\bar{h}}_i(y^{\ast})=0, 
\Phi^{\bar{h}}_i(y^{\ast})=0, 
~~~~~~~~~~~~~~~~~~~~~~~~~~~~~~~~~~~~~~~~~~~~~~~~~~~~~ 
\text{ for the rest of } i, ~ y^{\ast},  
    \end{cases}
\end{equation}
as $q$ is odd.

To verify the complementary conditions as defined in Solonnikov's theory, it suffices to show that the rows of the matrix  
\begin{align*}
\mathcal{A}(y^{\ast}, t, \mathrm{i}\xi, p)=\mathcal{B}_0(y^{\ast}, \mathrm{i} \xi) \hat{\mathcal{L}}_0(y^{\ast}, t, \mathrm{i} \xi, p)
\end{align*} 
are linearly independent modulo $M^{+}(y^{\ast}, \xi, p)$, where $\text{Re}\{p\} \geq 0$, and $p \ne 0$. 
Here, $\mathcal{B}_0(y^{\ast}, \mathrm{i} \xi)$ denotes a simplified notation for $\mathcal{B}_0(y^{\ast}, t, \mathrm{i} \xi, p)$, since $\mathcal{B}_0$ is independent of both $t$ and $p$ throughout this article.  
Moreover, since the matrix $\mathcal{B}_0(y^{\ast}, \mathrm{i} \xi)$ is composed of block matrices regardless of whether $q$ is even or odd, it suffices to verify the complementary conditions for each individual block at the boundary. 
Note that $\mathcal{A}=\mathcal{A}(y^{\ast}, t, \mathrm{i}\xi, p)$ is itself a diagonal block matrix, as $\hat{\mathcal{L}}_0=\hat{\mathcal{L}}_0(y^{\ast}, t, \mathrm{i}\xi, p)$ is diagonal, and $\mathcal{B}_0=\mathcal{B}_0(y^{\ast}, \mathrm{i}\xi)$ consists of two types of block matrices: 
$B_0(\mathrm{i}\xi)$ and $B_1(\mathrm{i}\xi)$.  
Therefore, it suffices to verify the complementary conditions at $y^{\ast}=0$ (or at $y^\ast=1$) separately for the following block matrices: 
\begin{align*}
\mathcal{A}_1 :=B_1 \cdot \begin{pmatrix}
\mathcal{\hat{L}}_{11}^0 Id_{n \times n} 
\\ 
\mathcal{\hat{L}}_{11}^0 Id_{n \times n} 
\\ 
\end{pmatrix}.
\end{align*}

Observe that 
\begin{align*}
\mathcal{A}_0 
= B_{0}(y^\ast=0, \mathrm{i}\xi) \cdot \mathcal{\hat{L}}_{11}^0(y^\ast=0,t,\mathrm{i} \xi , p) 
Id_{n \times n}
=\begin{pmatrix}
\mathcal{\hat{L}}_{11}^0 \cdot Id_{n\times n} 
\\ 
\mathcal{\hat{L}}_{11}^0 \cdot \mathrm{i} \xi \cdot Id_{n \times n} 
\\ 
\mathcal{\hat{L}}_{11}^0 \cdot (\mathrm{i} \xi)^{2} \cdot Id_{n \times n} 
\\
\vdots 
\\
\mathcal{\hat{L}}_{11}^0 \cdot (\mathrm{i} \xi)^{k-1} \cdot Id_{n \times n} 
\\
\end{pmatrix}.
\end{align*} 
To verify the linear independence of the rows of matrix $\mathcal{A}_0$ modulo $M^{+}(y^\ast=0, \xi, p)$, we need to show that 
\begin{align}
\label{comple-xi-higher-order}
\omega \cdot \mathcal{A}_0(y^\ast=0, t, \mathrm{i}\xi, p)=0 
~\text{  mod }  M^{+}(y^\ast=0, \xi, p) 
~~ \Longrightarrow ~~ \omega=(\omega_{1}, \ldots, \omega_{kn})=0. 
\end{align} 
The algebraic equation \eqref{comple-xi-higher-order} 
can be written as 
\begin{align*} 
\left(\sum\limits^{k-1}_{\mu=0}(\mathrm{i}  \xi)^{\mu} \cdot \omega_{\mu n+j}\right) (p+ \xi^{2k})^{nq-1} =0 
~ \text{ mod }  M^{+}(y^\ast=0, \xi, p), 
~~ \forall\, j \in \{1, \ldots, n\}. 
\end{align*} 
By dividing both sides of the above equation by  $\prod\limits_{\mu=1}^{k} \left(\xi-\xi_{\mu}(y^{\ast}=0, p)\right)^{nq-1}$, we obtain 
\begin{align*}
\left(\sum\limits^{k-1}_{\mu=0}(\mathrm{i}  \xi)^{\mu} \cdot \omega_{\mu n+j}\right) \prod\limits_{\mu=k+1}^{2k} \left(\xi-\xi_{\mu}(y^{\ast}=0, p)\right)^{nq-1}  =0 ~ \text{ mod }  \prod\limits_{\mu=1}^{k} \left(\xi-\xi_{\mu}(y^{\ast}=0, p)\right),
\end{align*}
$\forall\, j \in \{1, \ldots, n\}$. 
This equation is equivalent to 
\begin{align}
\label{eq-first-m-column}
\omega_{j}+\mathrm{i}  \xi  \omega_{n+j}+(\mathrm{i}  \xi)^{2}  \omega_{2n+j}
+\cdots +(\mathrm{i}  \xi)^{k-1}  \omega_{(k-1)n+j}=0 
~ \text{ mod } \prod\limits^{k}_{\mu=1}\left(\xi-\xi_{\mu}(y^\ast=0,p)\right)
\text{,}
\end{align}
$\forall\, j \in \{1, \ldots, n\}$, 
since the polynomial $\prod\limits_{\mu=k+1}^{2k} \left(\xi-\xi_{\mu}(y^{\ast}, p)\right)^{nq-1}$ is not divisible by the polynomial 
$\prod\limits_{\mu=1}^{k} \left(\xi-\xi_{\mu}(y^{\ast}, p)\right)$.  
By substituting $\xi=\xi_{\mu}$ into \eqref{eq-first-m-column}, 
we obtain 
\begin{align}
\label{system-first-m-column} 
\bar{\omega}_{j} \cdot C
=0, 
\end{align} 
where 
\begin{align*} 
\bar{\omega}_{j}=
(\omega_{j}, \omega_{n+j}, \omega_{2n+j},\cdots, \omega_{(k-1)n+j}), 
\end{align*} 
and 
\begin{align*} 
C= \begin{pmatrix}
1&\mathrm{i} \xi_{1}&(\mathrm{i} \xi_{1})^{2}&\cdots&(\mathrm{i} \xi_{1})^{k-1} \\  
1&\mathrm{i} \xi_{2}&(\mathrm{i} \xi_{2})^{2}&\cdots&(\mathrm{i} \xi_{2})^{k-1} \\  
1&\mathrm{i} \xi_{3}&(\mathrm{i} \xi_{3})^{2}&\cdots&(\mathrm{i} \xi_{3})^{k-1} \\   
\vdots&\vdots&\vdots&\ddots&\vdots \\   
1&\mathrm{i} \xi_{k}&(\mathrm{i} \xi_{k})^{2}&\cdots&(\mathrm{i} \xi_{k})^{k-1} \\   
\end{pmatrix}. 
\end{align*} 
Notice that the matrix $C$ is the so-called Vandermonde matrix with the property, 
\begin{align*}
\det C
=\prod\limits_{1 \leq \mu< \nu \leq k}(\mathrm{i} \xi_{\nu}-\mathrm{i} \xi_{\mu}) \ne 0.
\end{align*}
This implies that any solution to 
\eqref{system-first-m-column} is trivial, i.e., $\bar{\omega}_{j}=0$, $\forall\, j \in \{1, \ldots, n\}$. 
Thus, \eqref{comple-xi-higher-order} holds.

Next, we show that 
\begin{align}
\label{comple-xi-y-1-fitting} 
\omega \cdot \mathcal{A}_1 =0  
~ \text{ mod }  M^{+}(y^\ast=0, \xi, p)
~ \Longrightarrow ~ \omega=(\omega_{1}, \ldots,\omega_{2kn})=0,  
\end{align} 
where 
\begin{align*}
\mathcal{A}_1 
=
\begin{pmatrix}
Id_{n \times n} \cdot \mathcal{\hat{L}}_{11}^0& -Id_{n \times n}   \cdot \mathcal{\hat{L}}_{11}^0 
\\ 
\mathrm{i} \xi \cdot Id_{n \times n} \cdot \mathcal{\hat{L}}_{11}^0 & \mathrm{i} \xi \cdot Id_{n \times n} \cdot \mathcal{\hat{L}}_{11}^0
\\ 
(\mathrm{i} \xi)^{2} \cdot  Id_{n \times n} \cdot \mathcal{\hat{L}}_{11}^0& -(\mathrm{i} \xi)^{2} \cdot Id_{n \times n} \cdot \mathcal{\hat{L}}_{11}^0 
\\ 
\vdots  &\vdots 
\\ 
 (\mathrm{i} \xi)^{2k-2} \cdot Id_{n \times n} \cdot  \mathcal{\hat{L}}_{11}^0&  - (\mathrm{i} \xi)^{2k-2} \cdot  Id_{n \times n} \cdot \mathcal{\hat{L}}_{11}^0
 \\ 
(\mathrm{i} \xi)^{2k-1}\cdot Id_{n \times n} \cdot \mathcal{\hat{L}}_{11}^0 &(\mathrm{i} \xi)^{2k-1} \cdot  Id_{n \times n} \cdot \mathcal{\hat{L}}_{11}^0
\\ 
\end{pmatrix}. 
\end{align*} 
From a direct computation like above, 
the sufficient condition in 
\eqref{comple-xi-y-1-fitting} is equivalent to 
\begin{align*} 
\begin{cases}
\sum\limits_{\mu=0}^{2k-1} (\mathrm{i} 
\xi )^{\mu}\cdot \omega_{\mu n+j} =0 
~~~~~~~~~~~~~~~~~~~~~~~ \text{mod } \prod\limits^{k}_{\mu=1}\left(\xi-\xi_{\mu}(y^\ast=0,p)\right),  
\\
\sum\limits_{\mu=0}^{2k-1}
(-1)^{\mu-1}(\mathrm{i}\xi )^{\mu}\cdot\omega_{\mu n+j}=0 
~~~~~~~~~~~~ \text{mod } \prod\limits^{k}_{\mu=1}\left(\xi-\xi_{\mu}(y^\ast=0,p)\right),
\end{cases} 
\Rightarrow 
\omega=0. 
\end{align*} 
By substituting $\xi=\xi_r$, for all $r\in\{1, \ldots, k\}$, 
in these two equations, we obtain 
\begin{equation*}
\begin{cases}
\sum\limits_{\mu=0}^{2k-1} (\mathrm{i} \xi_{r})^{\mu} \cdot \omega_{ \mu n+j} =0, 
\\
\sum\limits_{\mu=0}^{2k-1} (-1)^{\mu-1} (\mathrm{i} \xi_{r})^{\mu} \cdot  \omega_{\mu n+j} =0 ,
\end{cases} 
~~ \forall\, r\in \{1, \ldots, k\}, ~~ \forall\, j \in \{1, \ldots, n\}. 
\end{equation*} 
By subtraction and addition between these two equations, we obtain 
\begin{equation}\label{D+E-1-fitting}
\begin{cases}
2\sum\limits_{\mu=0}^{k-1} (\mathrm{i}\cdot  \xi_{r})^{2\mu} \cdot \omega_{2\mu n+j} =0, 
\\ 
2\sum\limits_{\mu=1}^{k} (\mathrm{i}\cdot  \xi_{r})^{2\mu-1} \cdot \omega_{(2\mu-1) n+j} =0, 
\end{cases} 
~~~~~~~~~ \forall\, r\in \{1, \ldots, k\}, ~~ \forall\, j \in \{1, \ldots, n\}. 
\end{equation} 
The first equation in \eqref{D+E-1-fitting} can be written as 
\begin{align}
\label{eq:Dw=0}
\bar{\omega}_{j} \cdot D=0 
\end{align} 
where 
$\bar{\omega}_{j}$ is defined as 
\begin{align*}
\bar{\omega}_{j}=(\omega_{j}, ~~ \mathrm{i}^{2} \cdot \omega_{2n+j}, ~~ \mathrm{i}^{4} \cdot \omega_{4n+j}, \ldots, ~~ \mathrm{i}^{2(k-1)} \cdot \omega_{(2k-2)n+j}), 
\end{align*}
and 
\begin{align}
\label{def:D} 
D=\begin{pmatrix}
1&\xi_{1}^2&\xi_{1}^4&\cdots&\xi_{1}^{2k-2} 
\\ 
1&\xi_{2}^2&\xi_{2}^4&\cdots&\xi_{2}^{2k-2} 
\\ 
1&\xi_{3}^2&\xi_{3}^4&\cdots&\xi_{3}^{2k-2} 
\\ 
\vdots&\vdots&\vdots&\ddots&\vdots 
\\ 
1&\xi_{k}^2&\xi_{k}^4&\cdots&\xi_{k}^{2k-2} 
\\ 
\end{pmatrix} 
\end{align} 
is the so-called Vandermonde matrix with the property, 
\begin{align}
\label{eq:det_D}
\det{D}=\prod_{1 \leq \mu <\nu \leq k}\left(\xi_{\nu}^2-\xi_{\mu}^2\right) 
\ne 0, 
\end{align} 
where $\xi_{\mu} \ne \xi_{\nu}, \forall\, \mu\ne \nu$.  
Hence, any solution to \eqref{eq:Dw=0} is trivial. 
It follows that the first equation in \eqref{D+E-1-fitting} can be readily solved, yielding     
\begin{align*} 
 \omega_{2\mu n+j}=0, 
~~~~~~ \forall\, \mu \in \{0, 1, \ldots, k-1\}, \, j\in \{1, \ldots, n\}. 
\end{align*}
The second equation in \eqref{D+E-1-fitting} can be rewritten as  
\begin{equation*} 
\tilde{\omega} \cdot E=0, 
\end{equation*} 
where 
\begin{equation*}
\tilde{\omega} =\begin{pmatrix}
\mathrm{i} \cdot \omega_{n+j}, &\mathrm{i}^3 \omega_{3n+j}, &\mathrm{i}^5 \omega_{5n+j}, &\ldots, & \mathrm{i}^{2k-1} \omega_{(2k-1)n+j}\\ 
\end{pmatrix},
\end{equation*}
and 
\begin{align*}
E=\begin{pmatrix} 
\xi_{1}& \xi_{1}^3&\cdots& \xi_{1}^{2k-3}&\xi_{1}^{2k-1} 
\\ 
\xi_{2}&\xi_{2}^3&\cdots&\xi_{2}^{2k-3}& \xi_{2}^{2k-1} 
\\ 
\xi_{3}&\xi_{3}^3&\cdots&\xi_{3}^{2k-3}& \xi_{3}^{2k-1} 
\\ 
\vdots&\vdots&\ddots&\vdots&\vdots
\\ 
\xi_{k}& \xi_{k}^3&\cdots&\xi_{k}^{2k-3}&\xi_{k}^{2k-1} 
\end{pmatrix}.
\end{align*} 
Note that 
$$
\det{E}=\underset{1 \leq j \leq k}{\prod} \xi_j \cdot \det{D} \ne 0. 
$$ 
This implies that 
\begin{align*} 
 \omega_{(2\mu-1)n+j}=0, 
~~~~~~ \forall\, \mu \in \{1, \ldots, k\}, \, j\in \{1, \ldots, n\}. 
\end{align*} 
Combining the above, we conclude that 
$\omega=(\omega_1,\cdots,\omega_{2kn})=0$.
Therefore, the complementary conditions at the boundary are satisfied.

\smallskip 

{\bf $\bullet$ The complementary conditions of the initial datum.} 
(see \cite[Page 12]{Solonnikov65} or Definition \ref{def:complementary-condition}.) 

\smallskip 
  
The matrix associated with the initial conditions has the form 
\begin{align*}
\mathcal{C}_0(y,\partial_{y},\partial_{t})=Id_{nq \times nq}.
\end{align*} 
It follows from the definitions of $\mathcal{C}_0(y, 0, p)$ and $\hat{\mathcal{L}}_0(y, t, \mathrm{i}\xi, p)$
that the matrix 
\begin{align*}
\mathcal{D}(y,p)=\mathcal{C}_0(y, 0, p)  \mathcal{\hat{L}}_0(y, 0, 0, p)= \text{diag}(\underbrace{p^{nq-1}, \ldots, p^{nq-1}}_{nq  \text{ elements } }) 
\end{align*} 
has rows that are linearly independent modulo $p^{nq}$.
Therefore, the complementary conditions for the initial data $g_0$ are satisfied.

\smallskip 

{\bf $\bullet$ The smoothness of coefficients.} 

\smallskip 

From applying Lemmas \ref{lem:RemarkB1} and \ref{lem:tech2} to the assumption,  
$\bar{g}_l \in C^{\frac{2k+\alpha_1}{2k}, 2k+\alpha_1}([0,T] \times [0,1])$,  
the coefficients in the linear parabolic system \eqref{eq:higher-order-linear-g_l} fulfill 
\begin{equation*}
G_l(\partial_y^{2k-1} \bar{g}_{l}, \ldots, \bar{g}_{l}) 
\in C^{\frac{1+\alpha_1}{2k}, 1+\alpha_1}([0,T] \times [0,1]) \subset C^{\frac{\alpha_1}{2k}, \alpha_1}([0,T] \times [0,1]), ~~~~\forall\, l \in \{1, \ldots, q\},
\end{equation*} 
and 
\begin{equation*}
\bar{h}_l (\cdot, y^{\ast}_l) 
\in C^{\frac{2+\alpha_2}{2}}([0,T]) \subset C^{\frac{1+\alpha_1}{2k}}([0,T]), ~~~~\forall\, y^{\ast}_l=1-2 \bigg\{ \frac{l+1}{2} \bigg\}, ~ l \in \{1, \ldots, q-1\}.
\end{equation*} 
By assumption we have $g_{l,0} \in C^{2k+\alpha_1}([0,1])$, $\forall\, l\in \{1, \ldots, q-1\}$. 
Hence, the required smoothness of coefficients in applying 
Theorem \ref{Solonnikov-theorem} in the H\"{o}lder space  $C^{\frac{2k+\alpha_1}{2k}, 2k+\alpha_1}([0,T] \times [0,1])$ is fulfilled. 

\bigskip 

We have now verified all the required sufficient conditions stated in Assumption \ref{assumption:complementary-condition} and 
Theorem \ref{Solonnikov-theorem}. 
Consequently, the results including the estimate \eqref{estimate-for-g_l} follow directly by applying 
the inequality 
\begin{align}\label{cauchy-inequality}
|a_1| + \cdots + |a_n| \leq \sqrt{n(a_1^2 + \cdots + a_n^2)}
\text{,} 
\end{align}
and Theorem \ref{Solonnikov-theorem} with the specific choice of data as follows:

Initial datum: $\phi=g_0$,

Boundary datum: 
$\Phi=\Phi^{\bar{h}}$, as given by \eqref{definition:Phi-bar-h-q-even} and \eqref{definition:Phi-bar-h-q-odd},

Inhomogeneous lower-order term: 
\begin{align*}
f=\bigg( G_1(\partial_y^{2k-1} \bar{g}_{1}, \ldots, \bar{g}_{1}), \ldots, G_q(\partial_y^{2k-1} \bar{g}_{q}, \ldots, \bar{g}_{q})  \bigg)
\end{align*} 
in the linear parabolic system \eqref{eq:Solonnikov}. 
\end{proof}

\begin{lem}[Solutions to the linear parabolic system \eqref{eq:second-order-linear-h_l}]
\label{lem:linear-system-h-l}
Suppose all the assumptions in Lemma \ref{lem:linear-system-g-l}  are satisfied, and $g_l$ is a solution to \eqref{eq:higher-order-linear-g_l}. 
Then, for any $T>0$, there exists a unique solution $h_l$ to the linear parabolic system \eqref{eq:second-order-linear-h_l} such that $h_{l}\in C^{\frac{2+\alpha_2}{2}, 2+\alpha_2}([0,T]\times [0,1])$, $\forall\, l \in \{1, \ldots, q-1\}$, and 
\begin{align}\label{estimate-for-h_l}
\sum\limits^{q}_{l=1} 
& \norm{h_{l}}_{C^{\frac{2+\alpha_2}{2},2+\alpha_2}([0,T] \times [0,1])} \leq 
\bar{C}_0
\cdot \bigg{(} \sigma^2 \sum^{q-1}_{l=1}\norm{ W_1(\partial_y \bar{h}_l, \bar{h}_l)}_{C^{\frac{\alpha_2}{2},\alpha_2}([0,T] \times [0,1])}) 
\\ \nonumber
&~~~~~~~~~~~~~~~
+\sum^{q-1}_{l=1}\, |p_{l}|
+\sum_{\substack{l=1, \cdots, q-1 \\ y^{\ast}_l =1-2 \big\{ \frac{l+1}{2} \big\} }} 
\|g_l (\cdot, y^{\ast}_l)\|_{C^{\frac{2+\alpha_2}{2}}([0,T])}  +\sum^{q-1}_{l=1}\Vert h_{l,0} \Vert_{C^{2+\alpha_2}([0,1])}\bigg{ )},
\end{align}
where the constant $\bar{C}_0>0$ is independent of the non-homogeneous terms and the initial-boundary datum in the linear parabolic system, \eqref{eq:second-order-linear-h_l}. 
\end{lem}

\begin{proof} 

Observe that the left-hand side of the linear parabolic system 
\eqref{eq:second-order-linear-h_l} 
can be expressed in the form of a matrix multiplication:  
$\mathcal{L}(y, t, \partial_{y}, \partial_{t})h^{T},$ 
where $h=(h_1, \ldots, h_{q-1}) \in \mathbb{R}^{n(q-1)}$, with $h_\nu=(h_\nu^1, \ldots, h_\nu^{n})$,  
and 
$\mathcal{L}(y, t, \partial_{y}, \partial_{t})
=\text{diag} \big(\mathcal{L}_{1 1}, \ldots, \mathcal{L}_{n(q-1) \, n(q-1)} \big) 
$ 
is a diagonal matrix of size $n(q-1)\times n(q-1)$ matrix. Each diagonal entry is given by 
\begin{equation*}
\mathcal{L}_{\ell \ell}(y,t,\partial_{y},\partial_{t})=
\partial_{t}- \sigma^2 \partial_{y}^{2}, 
~~~~~~~~~~ 
\ell=(l-1) n+j, j \in \{1, \ldots, n\}, ~ l \in \{1, \ldots, q-1\}.
\end{equation*}  
Thus, we obtain  
\begin{equation*}
\mathcal{L}_{\ell \ell}(y, t, \mathrm{i} \xi, p)=
p+ \sigma^2 \xi^{2}, 
~~~~~~~~~~~~~~~~
\ell=(l-1)n+j, ~ j \in \{1, \ldots, n\}, ~ l \in \{1, \ldots, q-1\}.
\end{equation*} 
Note that, in this article, the principal part $\mathcal{L}_0$ of the matrix $\mathcal{L}$ coincides, i.e., $\mathcal{L}_0=\mathcal{L}$. 
Hence, we let  
\begin{align*}
L(y, t, \mathrm{i}\xi, p):= \det\mathcal{L}_0(y, t, \mathrm{i} \xi, p)
=\left(p+ \sigma^2 \xi^{2}\right)^{n(q-1)}\text{,}
\end{align*} 
and   
\begin{align*}
\mathcal{\hat{L}}_0 (y, t, \mathrm{i} \xi, p)
:=L(y, t, \mathrm{i}\xi, p)\mathcal{L}_0^{-1}(y, t, \mathrm{i}\xi, p) 
= \text{diag} \big(\widehat{\mathcal{L}}_{1 1}^0, \ldots, \widehat{\mathcal{L}}_{n(q-1) \, n(q-1)}^0 \big) 
\text{,}
\end{align*} 
where 
\begin{equation}\label{eq:L_ll^0}
\mathcal{\hat{L}}_{\ell \ell}^0=
(p+ \sigma^2 \xi^{2})^{n(q-1)-1}, 
~\ell=(l-1)n+j, ~ j \in \{1, \ldots, n\}, ~l \in \{1, \ldots, q-1\}.
\end{equation}

\smallskip 

{\bf $\bullet$ The parabolicity condition.} 
(\cite[Page 8]{Solonnikov65} or Definition \ref{def:parabolicity-condition}.)

\smallskip 

From the definition, the roots of the polynomial 
$L(x, t, \mathrm{i}\xi, p)$ satisfy
$$ p=-\sigma^2 \xi^{2} ~~\text{ with multiplicity } n(q-1), 
~~~\xi \in \mathbb{R}. 
$$
Therefore, the system is uniformly parabolic by the definition.

\smallskip 

{\bf $\bullet$ The compatibility conditions of the initial datum at the boundaries.} 
(see Definition \ref{def:cc-order-p-fitting}; for general parabolic systems, see 
\cite[Page 98]{Solonnikov65} or Definition \ref{def:compatibility-conditions-general-equa}.

\smallskip 

Since the boundary operators for $h$ in \eqref{eq:second-order-linear-h_l} are linear, the compatibility conditions of order zero for the linear parabolic system  \eqref{eq:second-order-linear-h_l} (See Definition \eqref{def:compatibility-conditions-general-equa}) are satisfied if 
\begin{equation}
\label{eq:cc_for_h_l}
\begin{cases}
h_{l,0}(y^{\ast}_l)= p_{l}, 
~~~~~~~~~~~~~~~~~~~~~~~~~~~~~~~~~~~~~~~~~~~~~~~~~~~~~~~~~~~~~~~ 
y^{\ast}_l= 1- 2\{\frac{l}{2}\},   
\\
 h_{l,0}(y^{\ast}_l)=g_{l}(0, y^{\ast}_l), 
~~~~~~~~~~~~~~~~~~~~~~~~~~~~~~~~~~~~~~~~~~~~~~~~~~~~
y^{\ast}_l= 1- 2\{\frac{l+1}{2}\},
\\
\sigma^2 \partial_y^{2} h_{l,0}(y^{\ast}_l) 
+ \sigma^2 W_1(\partial_y \bar{h}_{l}(0, y^{\ast}_l), \bar{h}_{l}(0, y^{\ast}_l))=0, 
~~~~~~~~~~~~~~~~~~~ 
y^{\ast}_l= 1- 2\{\frac{l}{2}\},  
\\
\sigma^2 \partial_y^{2} h_{l,0}(y^{\ast}_l) 
+ \sigma^2 W_1(\partial_y \bar{h}_{l}(0, y^{\ast}_l), \bar{h}_{l}(0, y^{\ast}_l) )=\partial_t g_l(t,y^{\ast}_l)_{|_{t=0}}, 
~
y^{\ast}_l= 1- 2\{\frac{l+1}{2}\},
\end{cases} 
\end{equation} 
for all $l \in \{1, \ldots, q-1\}$. 
Below, we only verify the last equation in \eqref{eq:cc_for_h_l}, since the assumption,  
$(g_0, h_0)$ satisfies the  compatibility conditions of order zero for the nonlinear parabolic system \eqref{AP-q-even-fitting}$\sim$\eqref{BC-q-even-fitting}, 
trivially assures the validity of the first three equations in \eqref{eq:cc_for_h_l}. 

Since $g_l$ is assumed to be a solution to \eqref{eq:higher-order-linear-g_l}, it satisfies
\begin{align*}
\partial_t g_l=(-1)^{k+1}\cdot \partial_y^{2k} g_{l}
+G_l(\partial_y^{2k-1} \bar{g}_{l}, \ldots, \partial_y \bar{g}_{l}, \bar{g}_{l}). 
\end{align*} 
In view of the initial conditions 
$g_{l}(0, \cdot)= g_{l,0}(\cdot )$ and $\bar{g}_{l}(0, \cdot)= g_{l,0}(\cdot )$, together with the smoothness of $g_l$ and $\bar{g}_l$,  we obtain 
\begin{align}
\label{eq:d_t(g_l)}
\partial_t g_l(t,y^{\ast}_l)_{|_{t=0}}=(-1)^{k+1}\cdot \partial_y^{2k} g_{l,0}(y^{\ast}_l)+G_l(\partial_y^{2k-1} g_{l,0}(y^{\ast}_l), \ldots, \partial_y g_{l,0}(y^{\ast}_l), g_{l,0}(y^{\ast}_l)).
\end{align} 
Since the pair $(g_0, h_0)$ satisfies the compatibility conditions of order zero for the nonlinear parabolic system \eqref{AP-q-even-fitting}$\sim$\eqref{BC-q-even-fitting}, equation \eqref{eq:d_t(g_l)} confirms that the final condition in \eqref{eq:cc_for_h_l} holds.
We therefore conclude that $h_0$ satisfies the compatibility conditions of order zero for the linear parabolic system \eqref{eq:second-order-linear-h_l}. 

\smallskip 

{\bf $\bullet$ The polynomial $M^{+}$.} 
(\cite[Page 11]{Solonnikov65} or see Definition \ref{def:complementary-condition} and Remark \ref{Remark:Complementary-condition}.) 

\smallskip 

As $\text{Re } p \geq 0$, and $p \ne 0$, we write 
\begin{align*}
p=|p| e^{\mathrm{i} \theta_p}, ~~~ -\frac{\pi}{2} \leq \theta_p \leq \frac{\pi}{2}, 
\end{align*}
and define 
\begin{align*}
\zeta_\nu(y^{\ast},p)=\frac{\sqrt{|p|}}{|\sigma|} e^{\mathrm{i} \bigg( \frac{\theta_p}{2}+\frac{(2\nu-1)\pi}{2} \bigg)}, 
~~~~\nu \in \{1, 2\}.
\end{align*}

Observe from the definition that the polynomial 
$L=L(y, t, \mathrm{i}\xi, p)$, viewed as a function of $\xi$ has $2n(q-1)$ roots. 
Among them, those with positive real parts are denoted by 
$\zeta_{1}(y^{\ast}, p)$ with multiplicity $n(q-1)$,
while those with negative real parts are denoted by 
$\zeta_{2}(y^{\ast}, p)$, also with multiplicity $n(q-1)$. 
Thus, we obtain  
$$ L=\prod_{\nu=1}^{2}\left(\xi-\zeta_{\nu}(y^{\ast}, p)\right)^{n(q-1)}. $$ 
From the definition of polynomial $M^{+}(y^{\ast}, \xi, p)$, 
we conclude   
\begin{align*} 
M^{+}(y^{\ast}, \xi, p)
=\left(\xi-\zeta_{1}(y^{\ast}, p)\right)^{n(q-1)} . 
\end{align*}

\smallskip 

{\bf $\bullet$ The complementary conditions along the boundaries $[0,T]\times \partial{I}$.} 
(\cite[Page 11]{Solonnikov65} or Definition \ref{def:complementary-condition}.)

\smallskip

The boundary conditions in \eqref{eq:second-order-linear-h_l} can be rewritten as 
\begin{align*}
\mathcal{B}_0(y^\ast,\partial_y) h^{T}(t,y^{\ast})= Id_{n(q-1) \times n(q-1)} h^{T}(t,y^{\ast})=\Phi^{g}(t,y^{\ast}), 
~~~ y^\ast\in\{0,1\},    
\end{align*}
where 
\begin{equation}\label{definition-Phi-bar-g}
\Phi^{g}(t,y^{\ast})=
\begin{cases}
\big(p_1, g_2(t,0), p_3, \ldots, g_{q-2}(t,0), p_{q-1}   \big), 
~~~~~~~~~\text{ as } y^{\ast}=0, ~~ q \text{ odd},
\\
\big(p_1, g_2(t,0), p_3, \ldots, p_{q-2}, g_{q-1}(t,0)   \big), 
~~~~~~~~\text{ as } y^{\ast}=0, ~~ q \text{ even},
\\
\big(g_1(t,1), p_2, \ldots, g_{q-2}(t,1),  p_{q-1} \big), ~~~~~~~~~~~~~\text{ as } y^{\ast}=1, ~~ q \text{ odd},
\\ 
\big( g_1(t,1), p_2, \ldots, p_{q-2}, g_{q-1}(t,1)   \big), 
~~~~~~~~~~~~~\text{ as } y^{\ast}=1, ~~ q \text{ even}.
\end{cases}
\end{equation} 
To verify the complementary conditions, it suffices to show that the rows of the matrix 
\begin{align*}
\mathcal{A}(y^{\ast}, t, \mathrm{i}\xi, p)=&\mathcal{B}_0(y^{\ast},  \mathrm{i}\xi) \cdot \widehat{\mathcal{L}}_0(y^{\ast}, t, \mathrm{i}\xi, p)=Id_{n(q-1) \times n(q-1)} \widehat{\mathcal{L}}_0(y^{\ast}, t, \mathrm{i}\xi, p) \\
=&\text{diag} \big(\widehat{\mathcal{L}}_{1 1}^0, \ldots, \widehat{\mathcal{L}}_{n(q-1) \, n(q-1)}^0 \big) 
\end{align*} 
are linearly independent modulo $M^{+}(y^{\ast}, \xi, p)$, 
where $\widehat{\mathcal{L}}_{\ell \ell}^0$ is defined in \eqref{eq:L_ll^0}, $\text{Re}\{p\} \geq 0$, and $p \ne 0$. 
To verify the linear independence of the rows of matrix $\mathcal{A}$ modulo $M^{+}(y^\ast, \xi, p)$, it suffices to show that   
\begin{align}
\label{comple-zeta-second-order}
\omega \cdot \mathcal{A}(y^\ast, t, \mathrm{i}\xi, p)=0 
~\text{  mod }  M^{+}(y^\ast, \xi, p) 
~~ \Longrightarrow ~~ \omega=(\omega_{1}, \ldots, \omega_{n(q-1)})=0. 
\end{align} 
Note that the algebraic equation \eqref{comple-zeta-second-order} 
can be written as 
\begin{align*} 
(p+ \sigma^2 \xi^{2})^{n(q-1)-1} \omega_{j} =0 
~ \text{ mod }  M^{+}(y^\ast, \xi, p), 
~~ \forall\, j \in \{1, \ldots, n(q-1)\}.  
\end{align*}  
This is equivalent to 
\begin{align}\label{eq:xi-zeta}
    \left(\xi-\zeta_{2}(y^{\ast}, p)\right)^{n(q-1)-1} \omega_j =0 ~~~~ \text{ mod }  \xi-\zeta_{1}(y^{\ast}, p), 
~~ \forall\, j \in \{1, \ldots, n(q-1)\},
\end{align} 
since the polynomial $\left(\xi-\zeta_{2}(y^{\ast}, p)\right)^{n(q-1)-1}$ is not divisible by the term, $\xi-\zeta_{1}(y^{\ast}, p)$. 
By substituting $\xi=\zeta_{1}(y^{\ast}, p)$ into both sides of \eqref{eq:xi-zeta}, 
we obtain 
\begin{align*}
\omega_j = 0, 
~~~~\forall\, j  \in \{1, \ldots, n(q-1) \}, 
\end{align*}
which establishes \eqref{comple-zeta-second-order}.

\smallskip 

{\bf $\bullet$ The complementary conditions of the initial datum.} 
(\cite[Page 12]{Solonnikov65} or Definition \ref{def:complementary-condition}.) 

\smallskip 

Since the matrix associated with the initial conditions has the form 
\begin{align*}
\mathcal{C}_0(y,\partial_{y},\partial_{t})=Id_{n(q-1) \times n(q-1)},
\end{align*} 
it follows from the definitions of $\mathcal{C}_0(y, 0, p)$ and $\mathcal{\hat{L}}_0(y,t,\mathrm{i}\xi,p)$ 
that the matrix 
\begin{align*}
\mathcal{D}(y,p)=\mathcal{C}_0(y, 0, p) \mathcal{\hat{L}}_0 (y, 0, 0, p)
= \text{diag}\big(\underbrace{p^{n(q-1)-1}, \ldots, p^{n(q-1)-1}}_{n(q-1)  \text{ elements} } \big)  
\end{align*} 
has rows that are linearly independent modulo $p^{n(q-1)}$. 
Therefore, the initial datum $h_0$ satisfies the complementary conditions.

\smallskip 

{\bf $\bullet$ The smoothness of coefficients.} 

\smallskip 

By applying Lemmas \ref{lem:RemarkB1} and \ref{lem:tech2}, and noting that $\frac{\alpha_1}{k}>\alpha_2$, 
it follows from the assumption $\bar{h}_l \in C^{\frac{2+\alpha_2}{2}, 2+\alpha_2}([0,T] \times [0,1])$ for all $l$ that the coefficients in the linear parabolic system \eqref{eq:second-order-linear-h_l} satisfy 
\begin{equation*}
W_1(\partial_y \bar{h}_{l}, \bar{h}_{l})
\in C^{\frac{1+\alpha_2}{2}, 1+\alpha_2}([0,T] \times [0,1]) \subset C^{\frac{\alpha_2}{2}, \alpha_2}([0,T] \times [0,1]), 
~~~~\forall\, l \in \{1, \ldots, q-1\}
\text{.}
\end{equation*} 
By applying Lemma \ref{lem:RemarkB1} to the smooth solution $g_l$
obtained in Lemma \ref{lem:linear-system-g-l}, we conclude that the boundary data fulfill 
\begin{align*}
    g_l(\cdot, y_l^{\ast}) \in C^{\frac{2k+\alpha_1}{2k}}([0,T]) \subset C^{\frac{2+\alpha_2}{2}}([0,T]), ~~~~~ y_l^{\ast}=1-2\bigg\{ \frac{l+1}{2} \bigg\}, l=1, \ldots, q-1.
\end{align*}
On the other hand, the assumption on the initial data ensures that  $h_{l,0} \in C^{2+\alpha_2}([0,1])$, $\forall\, l\in \{1, \ldots, q-1\}$.

\smallskip 

Since we have verified all the required sufficient conditions stated in Assumption \ref{assumption:complementary-condition} and 
Theorem \ref{Solonnikov-theorem}, 
the results including the estimate \eqref{estimate-for-h_l} follow directly by applying \eqref{cauchy-inequality}, and Theorem \ref{Solonnikov-theorem} with the specific choice of data:

Initial datum: 
\begin{align*}
\phi=\big(\phi_1, \ldots, \phi_{n(q-1)}  \big)=&\big( h_{1,0}, h_{2,0}, \ldots, h_{q-1,0} \big),  
\end{align*} 

Boundary datum: 
$\Phi^{g}$, as given by \eqref{definition-Phi-bar-g}, 

Inhomogeneous lower-order term: 
\begin{align*}
f=\bigg(  W_1(\partial_y \bar{h}_{1}, \bar{h}_{1}),  \ldots, W_{q-1}(\partial_y \bar{h}_{q-1},\bar{h}_{q-1}) \bigg).
\end{align*} 
\end{proof}

We are now ready to derive the following theorem. 

\begin{teo}[Solutions to the coupled linear parabolic systems, \eqref{eq:higher-order-linear-g_l}$\sim$\eqref{eq:second-order-linear-h_l}]
\label{thm:STE_for_linear-fitting} 
Let $\lambda\in [0,\infty)$, $\sigma \in (0,\infty)$, $\alpha_j\in(0,1)$, $j \in \{1, 2\}$ such that $\alpha_1>k\alpha_2$. 
Suppose that $(g_0,h_0)$ is the initial datum of the coupled linear parabolic systems, \eqref{eq:higher-order-linear-g_l}$\sim$\eqref{eq:second-order-linear-h_l}, 
where $g_0=(g_{1,0}, \ldots, g_{q,0})$,   
$h_0=(h_{1,0}, \ldots, h_{q-1,0})$, and  
$g_{l,0}$ and $h_{l,0}: [0,1] \to\mathbb{R}^n$ possess the regularity $g_{l,0}\in C^{2k+\alpha_1}([0,1])$ and  
$h_{l,0} \in C^{2+\alpha_2}([0,1])$, respectively, for all $l$. 
Moreover, assume that $(g_0, h_0)$ fulfills the compatibility conditions of order zero, as defined in Definition \ref{def:cc-order-p-fitting}, for the nonlinear parabolic system, \eqref{AP-q-even-fitting}$\sim$\eqref{BC-q-even-fitting}.   
Then, for any $T\in(0,1)$ and $(\bar{g}, \bar{h}) \in X^{T}_{(g_0,h_0)}$, there exists a unique solution $(g, h)\in X^{T}_{(g_0,h_0)}$ 
to the linear parabolic systems, \eqref{eq:higher-order-linear-g_l}$\sim$\eqref{eq:second-order-linear-h_l}, 
such that 
$g_{l}\in C^{\frac{2k+\alpha_1}{2k}, 2k+\alpha_1}([0,T]\times [0,1])$, 
$h_{l} \in C^{\frac{2+\alpha_2}{2}, 2+\alpha_2}([0,T]\times [0,1])$, 
$\forall\, l$, and  
\begin{align}\label{estimate:(g,h)}
\nonumber
&\norm{(g, h)}_{ X^{T}_{(g_0, h_0)} } 
\leq  C_0\cdot \bigg{(} \sum^{q}_{l=1}
\norm{G_l(\partial_y^{2k-1} \bar{g}_{l},
\ldots, \bar{g}_{l})}_{C^{\frac{\alpha_1}{2k},\alpha_1}([0,T] \times [0,1])} 
\\ \nonumber
&~~~~~~~~~~~~~~~~~~~~~~~~~~~~~   
+\sigma^2 \sum^{q-1}_{l=1}\norm{ W_1(\partial_y \bar{h}_l, \bar{h}_l)}_{C^{\frac{\alpha_2}{2},\alpha_2}([0,T] \times [0,1])}+\sum^{q}_{l=0}| p_{l}|
\\ \nonumber
&~~~~~~~~~~~~~~~~~~~~~~ 
+ \frac{1}{\sigma^2} 
\sum_{\substack{l=1, \cdots, q-1 \\  y^{\ast}_l =1-2 \big\{ \frac{l+1}{2} \big\} }} 
\|\partial_y \bar{h}_l (\cdot, y^{\ast}_l)\|_{C^{\frac{1+\alpha_1}{2k}}([0,T])}   +\sum\limits_{\mu=1}^{k-1} \left( |b^{\mu}_{1,0}|+ |b^{\mu}_{q, 2 \{ \frac{q}{2} \}}|   \right)   
\\
&~~~~~~~~~~~~~~~~~~~~~~~~~~~~ 
+\sum^{q}_{l=1}\Vert g_{l,0} \Vert_{C^{2k+\alpha_1}([0,1])}+\sum^{q-1}_{l=1}\Vert h_{l,0} \Vert_{C^{2+\alpha_2}([0,1])}\bigg{ )},
\end{align} 
where $C_0>0$ is independent of the non-homogeneous terms and the initial-boundary datum in the coupled linear parabolic system, 
\eqref{eq:higher-order-linear-g_l}$\sim$\eqref{eq:second-order-linear-h_l}. 
\end{teo} 

\begin{proof} 
The existence of a solution $(g, h)\in X^{T}_{(g_0,h_0)}$  to the coupled linear parabolic systems, \eqref{eq:higher-order-linear-g_l}$\sim$\eqref{eq:second-order-linear-h_l} follows from Lemmas \ref{lem:linear-system-g-l} and \ref{lem:linear-system-h-l}. 
Moreover,  
\begin{align*}
\nonumber
&\norm{(g, h)}_{ X^{T}_{(g_0, h_0)} } 
\leq  \bar{C}_0\cdot \bigg{(} \sum^{q}_{l=1}
\norm{G_l(\partial_y^{2k-1} \bar{g}_{l},
\ldots, \bar{g}_{l})}_{C^{\frac{\alpha_1}{2k},\alpha_1}([0,T] \times [0,1])} 
\\  \nonumber
&~~~~~~~~ 
+\sigma^2 \sum^{q-1}_{l=1}\norm{ W_1(\partial_y \bar{h}_l, \bar{h}_l)
 }_{C^{\frac{\alpha_2}{2},\alpha_2}([0,T] \times [0,1])}+\sum^{q}_{l=0}| p_{l}|
\\  \nonumber
&~~~~~~~~
+ \sum_{\substack{l=1, \cdots, q-1 \\ y^{\ast}_l =1-2 \big\{ \frac{l+1}{2} \big\} }}
\|g_l(\cdot, y^{\ast}_l)\|_{C^{\frac{2+\alpha_2}{2}}([0,T])} 
+\frac{1}{\sigma^2} 
\sum_{\substack{l=1, \cdots, q-1 \\  y^{\ast}_l =1-2 \big\{ \frac{l+1}{2} \big\} }} 
\|\partial_y \bar{h}_l (\cdot, y^{\ast}_l)\|_{C^{\frac{1+\alpha_1}{2k}}([0,T])}      
\\
&~~~~~~~~ +\sum\limits_{\mu=1}^{k-1} \left( |b^{\mu}_{1,0}|+ |b^{\mu}_{q, 2 \{ \frac{q}{2} \}}|   \right)+\sum^{q}_{l=1}\Vert g_{l,0} \Vert_{C^{2k+\alpha_1}([0,1])}+\sum^{q-1}_{l=1}\Vert h_{l,0} \Vert_{C^{2+\alpha_2}([0,1])}\bigg{ )}.
\end{align*} 
From the assumption,  
$\alpha_2<\frac{\alpha_1}{k}$, $T\in(0,1)$, 
and the definition of parabolic H\"{o}lder space, we have 
\begin{align*}
\nonumber
& \| g_l (\cdot, y^{\ast}_l) \|_{C^{\frac{2+\alpha_2}{2} }([0,T])} 
\\  \nonumber
=& \| g_l (\cdot, y^{\ast}_l) \|_{C^{0}([0,T])}+\| \partial_t g_l (\cdot, y^{\ast}_l) \|_{C^{0}([0,T])}+[\partial_t g_l(\cdot, y^{\ast}_l) ]_{\frac{\alpha_2}{2}, t } 
\\ 
\leq &  \| g_l (\cdot, y^{\ast}_l) \|_{C^{0}([0,T])}+\| \partial_t g_l (\cdot, y^{\ast}_l) \|_{C^{0}([0,T])}+[\partial_t g_l(\cdot, y^{\ast}_l) ]_{\frac{\alpha_1}{2k}, t } T^{\frac{\alpha_1}{2k}-\frac{\alpha_2}{2}} 
\\
\leq & \norm{g_{l}}_{C^{\frac{2k+\alpha_1}{2k},2k+\alpha_1}([0,T] \times [0,1])}.
\end{align*} 
The proof of \eqref{estimate:(g,h)} follows directly from the two inequalities above by setting $C_0=\max \{2 \bar{C}_0^2, \bar{C}_0 \}$. 
\end{proof}

\begin{rem} 
To keep the proof of Theorem \ref{thm:STE_for_linear-fitting} concise, we establish the existence of solutions to the coupled linear parabolic system, \eqref{eq:higher-order-linear-g_l}$\sim$\eqref{eq:second-order-linear-h_l} only for $T \in (0,1)$, even though $T$ may be taken arbitrarily in $(0,\infty)$ 
in Lemmas \ref{lem:linear-system-g-l} and \ref{lem:linear-system-h-l}.
\end{rem}

\subsection{Local Solutions of Theorem \ref{thm:Main_Thm-fitting}} 
\label{Sec:STE}

To establish the existence of local solutions in Theorem \ref{thm:Main_Thm-fitting}, we apply the Banach Fixed Point Theorem, making use of the a priori estimates provided in Theorem \ref{thm:STE_for_linear-fitting} in \S\ref{Sec:linear}. 
It is important to note that the regularity of solutions up to the initial time at the boundary points is closely tied to the so-called geometric compatibility conditions imposed on the initial data. These conditions play a crucial role in ensuring well-posedness and are discussed in detail below. 

\begin{teo}[The short-time existence in Theorem \ref{thm:Main_Thm-fitting}] 
\label{thm:existence-RS-even-fitting} 
Let $\lambda\in [0,\infty)$, $\sigma \in (0,\infty)$ and 
$\varrho_j\in (0,\infty)\setminus \mathbb{N}$, $\forall\, j\in \{1, 2\}$, with $\ell:=[\varrho_1]=[\varrho_2]\in\mathbb{N}_0$ and fractional parts satisfying $\{\varrho_1\}>k\{\varrho_2\}$. 
Assume that 
$(g_0,h_0)$ is the initial datum of the nonlinear parabolic system, 
\eqref{AP-q-even-fitting}$\sim$\eqref{BC-q-even-fitting}. 
Moreover, suppose that the initial datum, 
$g_{l,0}\in C^{2k+\varrho_1} \left([0,1]\right)$ and 
$h_{l,0}\in C^{2+\varrho_2}([0,1])$, 
satisfy the compatibility conditions of order $\ell$,  
as defined in Definition \ref{def:cc-order-p-fitting}, for all $l$. 
Then, the following statements hold. 
\begin{itemize}
\item[(i)] 
There exists a positive number 
$t_0=t_0\left(n, \{\varrho_1\}, \{\varrho_2\}, \lambda, \sigma, g_0, h_0, M\right)>0$ 
such that $(g, h)\in X^{t_0}_{(g_0,h_0)}$ 
is the unique solution to \eqref{AP-q-even-fitting} under the initial-boundary conditions specified in \eqref{BC-q-even-fitting}.

\item[(ii)]  
The solution $(g,h)$ fulfills the regularity 
\begin{equation*} 
\begin{cases}
g_l  \in C^{\frac{2k+\varrho_1}{2k},2k+\varrho_1}\left([0,t_0] \times [0,1]\right)\bigcap 
C^{\infty}\left((0,t_0]\times [0,1]\right), 
~~~~~~~~~ l\in\{1, \ldots, q\}, 
\\
h_l  \in C^{\frac{2+\varrho_2}{2},2+\varrho_2}\left([0,t_0] \times [0,1]\right)\bigcap 
C^{\infty}\left((0,t_0]\times [0,1]\right), 
~~~~~~ l\in\{1, \ldots, q-1\}.  
\end{cases}
\end{equation*} 

\item[(iii)] 
Moreover, 
the induced map $\gamma: [x_0, x_q]\rightarrow M \subset \mathbb{R}^n$ 
defined by \eqref{eq:parametrization-gamma} 
satisfies  
$\gamma(t,\cdot) \in C^{2k-2}([x_0, x_q]), ~ \forall\, t \in [0,t_0]$. 
\end{itemize} 
\end{teo}

\begin{rem} 
The dependence of the constant $t_0$ in Theorem \ref{thm:existence-RS-even-fitting} can be expressed more explicitly as follows: 
\begin{align*}
t_0=t_0 \bigg(n, \{\varrho_1\}, \{\varrho_2\}, \lambda, \sigma, 
\sum\limits^{q}_{l=1} \Vert g_{l,0}\Vert_{C^{2k+\{\varrho_1\}}([0,1])}, 
\sum\limits^{q-1}_{l=1} \Vert h_{l,0}\Vert_{C^{2+\{\varrho_2\}}([0,1])}, 
K_1, 
\|R_M\|_{C^{1}(M)}, 
C(M)
\bigg)
\end{align*}
where 
\begin{align*} 
K_1 &= 2 C_0 \cdot \bigg{(} \sum^{q}_{l=1} 
\Vert G_l(\partial_y^{2k-1}g_{l,0}, \ldots, g_{l,0})  \Vert_{C^{\{\varrho_1\}}([0,1])}+\sigma^2\sum^{q-1}_{l=1} 
\Vert  W_1(\partial_y h_{l,0}, h_{l,0}) \Vert_{C^{\{\varrho_2\}}([0,1])}
\\
& ~~~~~~~~~~~~~~~ 
+\sum\limits_{l=0}^{q} |p_{l}|
+\sum\limits_{\mu=1}^{k-1}(|b^{\mu}_{1,0}|+|b^{\mu}_{q,2\{\frac{q}{2}\}}|)
+\sum\limits^{q}_{l=1}\Vert g_{l,0} \Vert_{C^{2k+ \{\varrho_1\}} }([0,1]) \\
&~~~~~~~~~~~~~~~~+\big(  1+\frac{1}{\sigma^2} \big) \sum\limits^{q-1}_{l=1}\Vert h_{l,0} \Vert_{C^{2+\{\varrho_2\}}([0,1])} \bigg). 
\end{align*} 
Note that $R_{M}$ represents the Riemannian curvature tensor, as defined in \eqref{def:R_M}, the constant $C(M)$ is a universal bound for the 
$C^\infty$-norm of the derivatives of the first and second fundamental forms of $M$, as specified in \eqref{eq:C_infty-bdd_for_I+II}, 
and $C_0$ is a positive constant, which is independent of the initial-boundary data given in \eqref{BC-q-even-fitting}. 
\end{rem}

\begin{rem}
\label{rem:STE_1_universal_constant} 
In the argument for the short-time existence, the constants involved, including 
$C_0$, are independent of $m$, the dimension of the Riemannian manifold $M$. This independence stems from the fact that $M$ can be isometrically embedded into $\mathbb{R}^n$ with $m<n$, allowing the use of algebraic identities and triangle inequalities in the analysis. However, for the  argument for the long-time existence, the situation differs. The constants in the estimates that lead to Gr\"{o}nwall inequalities depend on $m$ because this formulation is intrinsic to $M$. Consequently, the isometric embedding of $M$ into $\mathbb{R}^n$ with $m<n$ does not influence these constants in the argument for the long-time existence.  
\end{rem}

\begin{proof} 

To keep notation clean, the H\"{o}lder exponents $\{\varrho_1\}$ and $\{\varrho_2\}$ will be denoted by $\alpha_1$ and $\alpha_2$, respectively, throughout the rest of this article, where $\alpha_1=\{\varrho_1 \}$ and $\alpha_2=\{\varrho_2 \}$.  

\smallskip 

The proof is structured as follows: 

\smallskip 

{\bf Existence of Local Solutions}: In Part (i), we apply Solonnikov's theory in conjunction with the Banach Fixed Point Theorem to establish the existence of local solutions to \eqref{AP-q-even-fitting} within the space $X_{(g_0,h_0)}^{t_0}$ for some $t_0>0$,  
subject to the initial-boundary conditions specified in \eqref{BC-q-even-fitting}.

{\bf Higher-Order Smoothness}: In Part (ii), we employ a bootstrapping argument to demonstrate the higher-order smoothness of $(g,h)$, expressed as: 
\begin{equation*} 
\begin{cases}
g_l \in C^{\frac{2k+\varrho_1}{2k},2k+\varrho_1} \left([0,t_0] \times [0,1]\right)\bigcap 
C^{\infty}\left((0,t_0]\times [0,1]\right), 
~~~~~~~~~~~~~~~ l\in\{1, \ldots, q\}, 
\\
h_l \in C^{\frac{2+\varrho_2}{2},2+\varrho_2} \left([0,t_0] \times [0,1]\right)\bigcap 
C^{\infty}\left((0,t_0]\times [0,1]\right), ~~~~~~~~~~~~ l\in\{1, \ldots, q-1\}.
\end{cases}
\end{equation*} 
 
{\bf Regularity of $\gamma$}: In Part (iii), we conclude the global regularity properties of $\gamma$. We show that $\gamma$ maintains regularity throughout the entire space-time domain.

%%%%%%%%%%%%%%%%%%%%%%%%%%%%%%%%%%%%%%%%%%%%%%%%%%%%%%%%%%%%%%
%%%%%%%%%%%%%%%%%%%%%%%%%%%%%%%   Self-maps  %%%%%%%%%%%%%%%%%
%%%%%%%%%%%%%%%%%%%%%%%%%%%%%%%%%%%%%%%%%%%%%%%%%%%%%%%%%%%%%%

\begin{itemize}

\item[\bf (i).] {\bf Existence of Local Solutions}: 
The existence of solution to \eqref{AP-q-even-fitting} in the space $X^{t_0}_{(g_0,h_0)}$ with the initial-boundary conditions \eqref{BC-q-even-fitting}.

\bigskip 

{\bf $\bullet$ Self-maps, i.e., $\exists$ $t_1 >0$ so that 
$\mathcal{G}\left( X^{T}_{(g_0,h_0)} \cap B_{K_1}\right) \subset X^{T}_{(g_0,h_0)} \cap B_{K_1}$, 
$\forall\, T\in(0, t_1)$.}

By applying Theorem \ref{thm:STE_for_linear-fitting} to the linear parabolic systems, \eqref{eq:higher-order-linear-g_l} and \eqref{eq:second-order-linear-h_l}, we introduce the operator
\begin{align*} 
\nonumber
\mathcal{G}:  X^{T}_{(g_0,h_0)} \cap B_K 
&\to 
X^{T}_{(g_0,h_0)} \cap B_K 
\\
(\bar{g}, \bar{h})
&\mapsto  (g,h)
\text{,}
\end{align*}
where $(g,h)$ is a solution to \eqref{eq:higher-order-linear-g_l} and \eqref{eq:second-order-linear-h_l}  
for any sufficiently small $T>0$. 
In the subsequent analysis, we will demonstrate that 
$\mathcal{G}$ acts as a self-map on $B_{K_1}$ by selecting an appropriate $K_1>0$. 
By applying the triangle inequalities in the 
H\"{o}lder spaces, \eqref{eq:C_infty-bdd_for_I+II}, 
\eqref{est:W_i-2nd-fund-form}, 
and the technical lemmas, Lemmas \ref{lem:RemarkB1}$\sim$\ref{lem:T^beta-norm0}, with $v=\bar{g}_l-g_{l,0}$ or $v=\bar{h}_l-h_{l,0}$ therein as appropriate, we obtain 
\begin{align}
\label{estimate-Riemmanian-curv-tensor} 
\nonumber
\| R_{M}( \bar{g}_l) - R_{M}(g_{l,0}) \|_{ C^{\frac{\alpha_1}{2k}, \alpha_1} ([0,T]\times [0,1])} \leq &  \|R_M\|_{C^{1}(M)}  \| \bar{g}_l-g_{l,0}\|_{ C^{\frac{\alpha_1}{2k}, \alpha_1} ([0,T] \times [0,1])} \\ 
\leq & C  \left(n, K_1, \|R_{M}\|_{C^{1}(M)} \right) \cdot T^{\beta_1} 
\text{,}
\end{align} 
and 
\begin{align}
\label{estimate-W_i} 
\nonumber 
&\|W_i(\partial_y^{i}\bar{g}_l, \ldots, \bar{g}_l)- W_i(\partial_y^{i}g_{l,0}, \ldots, g_{l,0})\|_{ C^{\frac{\alpha_1}{2k}, \alpha_1} ([0,T] \times [0,1])} 
\\ \nonumber
&\leq C(n, K_1) \cdot 
\|W_i \|_{C_{\text{loc}}^{1}(\mathbb{R}^n \times \cdots \times \mathbb{R}^n \times M) } \cdot \sum_{j \leq i} \| \partial_y^{j}\bar{g}_l-\partial_y^{j}g_{l,0}\|_{ C^{\frac{\alpha_1}{2k}, \alpha_1} ([0,T] \times [0,1])} 
\\
& \leq  C(n, K_1, C(M))\cdot 
T^{\beta_1}, 
\end{align} 
for any $i \in\{1, \cdots, 2k-1\}$, and 
$\beta_1=\min \big{ \{ } \frac{\alpha_1}{2k},\frac{1-\alpha_1}{2k} \big{ \} } $. 
From \eqref{estimate-Riemmanian-curv-tensor}, \eqref{estimate-W_i}, \eqref{eq:C_infty-bdd_for_I+II}, \eqref{est:W_i-2nd-fund-form}, 
and \eqref{eq:G_l-F_l-gamma+chi}, 
one can apply the triangle inequalities in the 
H\"{o}lder spaces, the algebraic identity
\begin{align}
\label{identity-a^n-b^n}
a^p-b^p=(a-b)(a^{p-1}+a^{p-2} b+\cdots+a b^{p-2}+b^{p-1}), 
~~~~~~ p \in \{2,3,\cdots\},  
\end{align}
and the technical lemmas, 
Lemmas \ref{lem:RemarkB1}$\sim$\ref{lem:T^beta-norm0}, 
to obtain 
\begin{align*} 
& \sum\limits^{q}_{l=1}\Vert G_l(\partial_y^{2k-1} \bar{g}_{l}, \ldots, \bar{g}_{l})
-G_l(\partial_y^{2k-1} g_{l,0}, \ldots, g_{l,0}) 
\Vert_{ C^{\frac{\alpha_1}{2k}, \alpha_1}([0,T] \times [0,1])} 
\\  \nonumber 
\leq & 
C\left(g_0 , K_1, C(M) 
\right) \sum\limits^{q}_{l=1} \| R_{M}( \bar{g}_l) - R_{M}(g_{l,0}) \|_{ C^{\frac{\alpha_1}{2k}, \alpha_1} ([0,T]\times [0,1])} 
\\  \nonumber
&~ +C \left(g_0, K_1,  \|R_M\|_{C^{1}(M)}  \right) \cdot 
\\  \nonumber
&~~~~~~~~~~~~~~~~ \sum\limits_{i \leq 2k-1}\sum\limits^{q}_{l=1}  \|W_i(\partial_y^{i}\bar{g}_l, \ldots, \bar{g}_l)- W_i(\partial_y^{i}g_{l,0}, \ldots, g_{l,0})\|_{ C^{\frac{\alpha_1}{2k}, \alpha_1} ([0,T] \times [0,1])}  
\\  \nonumber
&~~ + C \left(\lambda, g_0, K_1, C(M), \|R_M\|_{C^{0}(M)}\right) 
 \sum\limits_{j \leq 2k-1} \sum\limits_{l=1}^{q} \| \partial_y^{j}\bar{g}_l - \partial_y^{j}g_{l,0}\|_{ C^{\frac{\alpha_1}{2k}, \alpha_1}([0,T] \times [0,1])} 
\\ 
\leq & C\left(n, \lambda, g_0, K_1, M \right) \cdot T^{\beta_1} 
\text{. }
\end{align*} 
Note that we have used the simplified notation $C \left(g_0 , K_1,  M \right)$ 
to indicate that the constant $C$ depends on 
$\sum\limits^{q}_{l=1}\Vert g_{l,0} \Vert_{ C^{2k+\alpha_1} }([0,1])$, $\|R_M\|_{C^{1}(M)}$, and $C(M)$. 
Throughout the following analysis, we adopt this same notational convention to streamline the presentation of constants. 

By applying the triangle inequalities in the 
H\"{o}lder spaces, \eqref{est:W_i-2nd-fund-form}, 
and the technical lemmas, Lemmas \ref{lem:RemarkB1}$\sim$\ref{lem:T^beta-norm0}, with $v=\bar{g}_l-g_{l,0}$ or $v=\bar{h}_l-h_{l,0}$ therein, 
we can derive the estimate:    
\begin{align*}
\nonumber
&\|W_1(\partial_y\bar{h}_l, \bar{h}_l)- W_1(\partial_y h_{l,0}, h_{l,0})\|_{ C^{\frac{\alpha_2}{2}, \alpha_2} ([0,T] \times [0,1])} 
\\ \nonumber
&~~~~~ \leq C(n, K_1) \cdot 
\|W_1 \|_{C_{\text{loc}}^{1}(\mathbb{R}^n \times \cdots \times \mathbb{R}^n \times M) } \cdot \sum_{j \leq 1} \| \partial_y^j \bar{h}_l-\partial_y^j h_{l,0}\|_{ C^{\frac{\alpha_2}{2}, \alpha_2} ([0,T] \times [0,1])} 
\\ 
& ~~~~~
\leq  C(n, K_1, C(M)) \cdot T^{\beta_2} 
\text{,}
\end{align*}
where 
$\beta_2=\min \big{ \{ } \frac{\alpha_2}{2}, \frac{1-\alpha_2}{2} \big{ \} }$. 

By applying the inequality \eqref{interpolation-inequality-x-1} and the estimate \eqref{eq:normapp1} in Lemma \ref{lem:T^beta-norm0} with parameters $\ell=0$, $\alpha'=\frac{1+\alpha_1}{k}$, and $\alpha=\alpha_2$, we obtain 
\begin{align*}
\| \partial_y\bar{h}_l (\cdot, y^{\ast}_l)- \partial_y h_{l,0} (y^{\ast}_l) \|_{C^{\frac{1+\alpha_1}{2k} }([0,T])} 
&\leq \| \partial_y \bar{h}_l - \partial_y h_{l,0} \|_{C^{\frac{1+\alpha_2}{2}, 1+\alpha_2 }([0,T] \times [0,1])} 
\\ 
&\leq 2CK_1 T^{\beta_2}, 
\end{align*}
where $C>0$ is a constant.

Drawing on the inequality established in Theorem \ref{thm:STE_for_linear-fitting} from the theory of linear parabolic systems, and applying the triangle inequality in the H\"{o}lder spaces, we obtain the following estimate: 
\begin{align*} 
&\norm{(g,h)}_{X^{T}_{(g_0,h_0)}} 
\\  \nonumber 
&\leq  C_{0}\cdot \sum\limits^{q}_{l=1} 
\Vert G_l(\partial_y^{2k-1} \bar{g}_{l}, \ldots, \bar{g}_{l})
-G_l(\partial_y^{2k-1} g_{l,0}, \ldots, g_{l,0}) 
\Vert_{C^{\frac{\alpha_1}{2k}, \alpha_1}([0,T] \times [0,1])} 
\\ \nonumber 
& ~~+C_{0}\cdot \sigma^2 \sum\limits^{q-1}_{l=1} 
\Vert W_1(\partial_y \bar{h}_{l}, \bar{h}_{l})
-W_1(\partial_y h_{l,0}, h_{l,0}) 
\Vert_{C^{\frac{\alpha_2}{2}, \alpha_2} ([0,T] \times [0,1]) } 
\\  \nonumber
&~~ +C_0 \cdot 
\sum_{\substack{l=1, \cdots, q-1 \\ y^{\ast}_l =1-2 \big\{ \frac{l+1}{2} \big\} }}  
\frac{1}{\sigma^2} \cdot \| \partial_y\bar{h}_l (\cdot, y^{\ast}_l)- \partial_y h_{l,0} (y^{\ast}_l) \|_{C^{\frac{1+\alpha_1}{2k} }([0,T])} 
\\
&~~+C_0 \cdot  
\bigg( 
\sum^{q}_{l=1}\norm{  G_l(\partial_y^{2k-1} g_{l,0}, \ldots, g_{l,0})}_{ C^{\alpha_1} ([0,1]) } 
 + \sigma^2 \sum^{q-1}_{l=1}\norm{ W_1(\partial_y h_{l,0}, h_{l,0})}_{C^{\alpha_2}( [0,1])} 
\\ \nonumber   
& 
~~~~~~~~~~~~~~+ \sum\limits_{l=0}^{q} |p_{l}| 
 +\sum\limits_{\mu=1}^{k-1} \left( |b^{\mu}_{1,0}|+ |b^{\mu}_{q, 2 \{ \frac{q}{2} \}}|   \right)
+\sum^{q}_{l=1}
\Vert g_{l,0}\Vert_{C^{2k+\alpha_1}([0,1])} 
\\
&~~~~~~~~~~~~~~+ \big(1+\frac{1}{\sigma^2} \big) \sum^{q-1}_{l=1}\Vert h_{l,0} \Vert_{ C^{2+\alpha_2} ([0,1])} \bigg) 
\\  \nonumber 
&\leq  C_1 T^{\beta} 
+C_0 \cdot  
\bigg( 
\sum^{q}_{l=1}\norm{  G_l(\partial_y^{2k-1} g_{l,0}, \ldots, g_{l,0})}_{C^{\alpha_1}([0,1])} 
\\ \nonumber   
& 
~~~~~~~~~~~~~
+\sigma^2 \sum^{q-1}_{l=1}\norm{W_1(\partial_y h_{l,0}, h_{l,0})}_{C^{\alpha_2}([0,1])}
+ \sum\limits_{l=0}^{q} |p_{l}| 
 +\sum\limits_{\mu=1}^{k-1} \left( |b^{\mu}_{1,0}|
 + |b^{\mu}_{q, 2 \{ \frac{q}{2} \}}| \right)  
\\
&~~~~~~~~~~~~~~~~~~~~~~~~~~~~~~~~
+\sum^{q}_{l=1}\Vert g_{l,0} \Vert_{C^{2k+\alpha_1}([0,1])} 
+ \big(1+\frac{1}{\sigma^2} \big) \sum^{q-1}_{l=1}\Vert h_{l,0} \Vert_{C^{2+\alpha_2}([0,1])} \bigg), 
\end{align*} 
where $T\in(0,1)$ is sufficiently small, 
$C_1 =C_1 \left(\lambda, n, \sigma, g_0, h_0, K_1, M \right)$, 
and $\beta:=\min\{\beta_1, \beta_2\}$.  
By letting 
$K_1 \in (0, \infty)$ fulfilling 
\begin{align*} 
\frac{K_1}{2}=& C_0 \cdot \bigg{(} \sum^{q}_{l=1} 
\Vert G_l(\partial_y^{2k-1}g_{l,0}, \ldots, g_{l,0})  \Vert_{ C^{\alpha_1} ([0,1])}
+ \sigma^2 \sum^{q-1}_{l=1} 
\Vert  W_1(\partial_y h_{l,0}, h_{l,0}) \Vert_{C^{\alpha_2}([0,1])}
\\  
&~~~~~~~~~~~+ \sum\limits_{l=0}^{q} |p_{l}|
+\sum\limits_{\mu=1}^{k-1}(|b^{\mu}_{1,0}|+|b^{\mu}_{q,2\{\frac{q}{2}\}}|)
+\sum^{q}_{l=1}\Vert g_{l,0} \Vert_{ C^{2k+\alpha_1}([0,1])} 
\\
& ~~~~~~~~~+\big(1+\frac{1}{\sigma^2} \big) \sum^{q-1}_{l=1}\Vert h_{l,0} \Vert_{C^{2+\alpha_2}([0,1])} \bigg) 
\end{align*}
and by choosing $t_1>0$ fulfilling 
\begin{align*}
C_1 t_1^{\beta} \leq \frac{K_1}{2} , 
\end{align*} 
we conclude that $\norm{(g,h)}_{X^{t_1}_{(g_0,h_0)}}\leq K_1$. 
Thus, 
$$\mathcal{G}\left(X^{T}_{(g_0,h_0)} 
\cap B_{K_1}\right) \subset X^{T}_{(g_0,h_0)} \cap B_{K_1}, 
~~~~~~\forall\, T \in (0,t_1]. $$

\bigskip

%%%%%%%%%%%%%%%%%%%%%%%%%%%%%%%%%%%%%%%%%%%%%%%%%%%%%%%%%%%%%%%%%%%%%%%%%%%%%%
%%%%%%%%%%%%%%%%%%%%%%%%%%%   Contraction-maps   %%%%%%%%%%%%%%%%%%%%%%%%%%%%%
%%%%%%%%%%%%%%%%%%%%%%%%%%%%%%%%%%%%%%%%%%%%%%%%%%%%%%%%%%%%%%%%%%%%%%%%%%%%%%

{\bf $\bullet$ Contraction maps.} 

We now prove that the mapping
$$
\mathcal{G}: X^{T}_{(g_0,h_0)} \cap B_{K_1} \to X^{T}_{(g_0,h_0)} \cap B_{K_1}
$$ 
is a contraction map for sufficiently small $T>0$. Specifically, we show that there exists 
$t_0 \in (0,t_1]$ such that for all $T\in(0,t_0]$ 
and all 
$$
(\bar{g}, \bar{h}), (\bar{e}, \bar{f})\in X^{T}_{(g_0,h_0)} \cap B_{K_1}, 
$$
the following estimate holds: 
\begin{align}
\label{est:contraction-map-gamma+chi}
\norm{(g,h)-(e,f)}_{X^{T}_{(g_0,h_0)}} 
\leq C_{2} \, T^{\beta} \,  \norm{(\bar{g}, \bar{h})-(\bar{e}, \bar{f})}_{X^{T}_{g_0}}
<\norm{(\bar{g}, \bar{h})-(\bar{e}, \bar{f})}_{X^{T}_{g_0}},  
\end{align} 
where 
$(g,h)=\mathcal{G} \big( (\bar{g}, \bar{h}) \big)$, $(e,f)=\mathcal{G} \big( (\bar{e}, \bar{f}) \big)$, and the exponent $\beta=\min\{\beta_1, \beta_2\}$. 
The constant $C_2 = C_2 \left( \lambda, n, \sigma, g_0, h_0, 
K_1, \|R_M\|_{C^{1}(M)}, C(M) \right)$ is positive and depends only on the specified data.

Observe from 
\eqref{eq:co-deri}, \eqref{eq:co-deri-1}, \eqref{BC-q-even-fitting}, 
\eqref{eq:higher-order-linear-g_l}, and \eqref{eq:second-order-linear-h_l} that the difference $g-e$ fulfills 
the linear parabolic system:  
\begin{equation}
\label{eq:linear-g-e}
\begin{cases} 
\partial_t ( g_{l}-e_{l})+(-1)^{k} \partial_y^{2k}( g_{l}-e_{l})=
G_l(\partial_y^{2k-1} \bar{g}_{l}, \ldots, \bar{g}_{l})-G_l(\partial_y^{2k-1} \bar{e}_{l}, \ldots, \bar{e}_{l}), 
\\
~~~~~~~~~~~~~~~~~~~~~~~~~~~~~~~~~~~~~~~~~~~~~~~~~~~ 
\text{ in } (0,T) \times (0,1), ~ l \in\{1, \ldots, q\}, 
\\
(g_{l}-e_{l})(0,y)= 0, 
~~~~~~~~~~~~~~~~~~~~~~~~~~~~~~~~~~~~~~~~~~~~~~~~~~~~ 
l \in \{1, \ldots, q\}, 
\\
(g_{1}-e_{1})(t, 0)=0, 
~~~ 
(g_{q}-e_{q}) \big{( } t, 2\big{ \{ } \frac{q}{2} \big{ \} } \big{ ) }=0, 
\\
\partial_{y}^{\mu}(g_{1}-e_{1})(t, 0)=0, 
~~ \partial_{y}^{\mu}(g_{q}-e_{q}) \big{( } t, 2\big{ \{ } \frac{q}{2} \big{ \} } \big{ ) }=0, 
~~~~~~ 
\mu \in \{1, \ldots, k-1\}, 
\\
(g_{l}-e_{l})(t, y^{\ast}_{l})= (g_{l+1}-e_{l+1})(t, y^{\ast}_{l}),   
~ l \in \{1, \ldots, q-1\}, 
y^{\ast}_{l}= 1- 2\{\frac{l+1}{2}\}, 
\\
\partial_{y}^{\mu}\left(g_{l}-e_{l}\right) (t,y^{\ast}_{l})
+(-1)^{\mu-1}\partial_{y}^{\mu} \left(g_{l+1}-e_{l+1}\right)(t,y^{\ast}_{l})=0,  
\\
~~~~~~~~~~~~~~~~~~~~~
\mu \in \{1, \ldots, 2k-2\}, 
~ l \in \{1, \ldots, q-1\}, ~
y^{\ast}_{l}= 1- 2\{\frac{l+1}{2}\},  
\\ 
\partial_{y}^{2k-1}(g_{l}-e_{l})(t,y^{\ast}_l)+\partial_{y}^{2k-1}( g_{l+1}-e_{l+1})(t,y^{\ast}_{l})= \frac{(-1)^{k}}{\sigma^2}  \partial_{y} (\bar{h}_{l}-\bar{f}_{l})(t,y^{\ast}_{l}), 
\\
~~~~~~~~~~~~~~~~~~~~~~~~~~~~~~~~~~~~~~~~~~~~~~~~~ 
l \in \{1, \ldots, q-1\}, 
y^{\ast}_{l}= 1- 2\{\frac{l+1}{2}\},  
\end{cases}
\end{equation} 
and the difference $h-f$ fulfills the linear parabolic system:   
\begin{equation}
\label{eq:linear-h-f}
\begin{cases} 
\partial_t ( h_{l}-f_{l})- \sigma^2  \partial_y^{2}( h_{l}-f_{l})
= \sigma^2 W_1(\partial_y \bar{h}_l, \bar{h}_l)
- \sigma^2 W_1(\partial_y \bar{f}_l, \bar{f}_l),  
\\
~~~~~~~~~~~~~~~~~~~~~~~~~~~~~~~~~~~~~~~~~~~~ 
\text{ in } (0,T) \times (0,1), l \in\{1, \ldots, q-1\}, 
\\
(h_{l}-f_{l})(0,y)= 0,  
~~~~~~~~~~~~~~~~~~~~~~~~~~~~~~~~~~~~~~~~~~~ 
l \in \{1, \ldots, q-1\}, 
\\
(h_{l}-f_{l})(t, y^{\ast}_l)= 0, 
~~~~~~~~~~~~~~~~~~~~~ 
l \in \{1, \ldots, q-1 \}, 
~ y^{\ast}_l=1-2 \big{ \{ } \frac{l}{2} \big{ \} }, 
\\
(h_{l}-f_{l})(t, y^{\ast}_l)=(g_{l}-e_{l})(t, y^{\ast}_l),  
~~ 
l \in \{1, \ldots, q-1\}, ~
y^{\ast}_l= 1- 2\{\frac{l+1}{2}\},
\end{cases}
\end{equation} 
where $t \in [0,T]$ and $y \in [0,1]$.

Given that 
$(g,h)=\mathcal{G}\big( (\bar{g}, \bar{h}) \big)$ and $(e,f)=\mathcal{G}\big( (\bar{e}, \bar{f}) \big) $, 
with the initial data satisfying 
$\bar{g}_{l}(0,\cdot)=g_{l,0}(\cdot)$, $\bar{e}_{l}(0,\cdot)=g_{l,0}(\cdot)$, $\bar{h}_{l}(0,\cdot)=h_{l,0}(\cdot)$, $\bar{f}_{l}(0,\cdot)=h_{l,0}(\cdot)$, 
and assuming these data fulfill the compatibility conditions of order zero for \eqref{eq:higher-order-linear-g_l} and \eqref{eq:second-order-linear-h_l},
it follows that the initial data of $g-e$ and $h-f$ are identically zero and also satisfy the compatibility conditions of order zero for the linear parabolic systems \eqref{eq:linear-g-e} and \eqref{eq:linear-h-f}, respectively.
Therefore, by applying the theory of linear parabolic systems as stated in Theorem \ref{thm:STE_for_linear-fitting}, we conclude that 
$g-e$ is the unique solution to \eqref{eq:linear-g-e} and 
$h-f$ is the unique solution to \eqref{eq:linear-h-f}. Moreover, we obtain the following estimate: 
\begin{align}
\label{est:(g,h)-(e,f)}
&\norm{(g,h)-(e,f)}_{X^{T}_{(g_0,h_0)}} 
\\ \nonumber 
&\leq C_0 \cdot \left(\sum\limits^{q}_{l=1} 
\norm{ G_l(\partial_y^{2k-1} \bar{g}_{l}, \ldots, \bar{g}_{l})-G_l(\partial_y^{2k-1} \bar{e}_{l}, \ldots, \bar{e}_{l})}_{C^{\frac{\alpha}{2k}, \alpha}([0,T] \times [0,1])}
\right) 
\\ \nonumber 
& ~~~~~~~~~~~~~~~ 
+C_0 \cdot \left( \sigma^2 \sum\limits^{q}_{l=1} 
\norm{ W_1(\partial_y \bar{h}_l, \bar{h}_l)-W_1(\partial_y \bar{f}_l, \bar{f}_l) }_{C^{\frac{\alpha}{2}, \alpha}([0,T] \times [0,1])}
\right) 
\\ \nonumber
&~~~~~~~~~~~~~~+\frac{C_0 }{\sigma^2}  \sum_{\substack{l=1, \cdots, q-1 \\ y^{\ast}_l =1-2 \big\{ \frac{l+1}{2} \big\} }} 
\|\partial_y \bar{h}_l(\cdot, y^{\ast}_l) -\partial_y\bar{f}_l(\cdot, y^{\ast}_l)\|_{C^{\frac{1+\alpha_1}{2k}}([0,T])} 
\text{.}
\end{align} 
Thus, by applying the algebraic identity 
\eqref{identity-a^n-b^n}, the triangle inequality in the H\"{o}lder spaces, and leveraging \eqref{eq:C_infty-bdd_for_I+II} and \eqref{est:W_i-2nd-fund-form} along with the technical lemmas, 
Lemmas \ref{lem:RemarkB1}$\sim$\ref{lem:T^beta-norm0}, where we set $v=\bar{g}_l-\bar{e}_l$ or $v=\Bar{h}_l-\Bar{f}_l$ therein 
as appropriate, we can establish the following desired estimates: 
\begin{align*}
\nonumber
& \| R_{M}( \bar{g}_l) - R_{M} (\bar{e}_l) \|_{ C^{\frac{\alpha_1}{2k}, \alpha_1} ([0,T]\times [0,1])} 
\\
& ~~~~~~~~~~~ \leq  
C \left(n, K_1,  \|R_M\|_{C^{1}(M)} \right) 
\cdot  T^{\beta_1} 
\cdot \| \bar{g}_l-\bar{e}_l \|_{ C^{\frac{2k+\alpha_1}{2k}, 2k+\alpha_1}([0,T]\times [0,1])}, 
\\ \nonumber 
& \|W_i(\partial_y^{i}\bar{g}_l, \ldots, \bar{g}_l)- W_i(\partial_y^{i}\bar{e}_l, \ldots, \bar{e}_l)\|_{ C^{\frac{\alpha_1}{2k}, \alpha_1}([0,T] \times [0,1])} 
\\ \nonumber 
& ~~~~~~~~~~~~ \leq  
C\left(n, K_1, C(M) \right) \cdot T^{\beta_1} \cdot \| \bar{g}_l-\bar{e}_l \|_{ C^{\frac{2k+\alpha_1}{2k}, 2k+\alpha_1}([0,T]\times [0,1])}
\text{.} 
\end{align*} 
By applying 
\eqref{eq:F_l}, \eqref{eq:C_infty-bdd_for_I+II}, \eqref{est:W_i-2nd-fund-form}, 
and the technical lemmas, Lemmas \ref{lem:RemarkB1}$\sim$\ref{lem:T^beta-norm0}, we obtain 
\begin{align} 
\label{eq:G-estimate-3}
&\norm{ G_l(\partial_y^{2k-1} \bar{g}_{l}, \ldots, \bar{g}_{l})-G_l(\partial_y^{2k-1} \bar{e}_{l}, \ldots, \bar{e}_{l})}_{  C^{\frac{\alpha_1}{2k}, \alpha_1}([0,T] \times [0,1])} 
\\ \nonumber
& \leq C \left( K_1, C(M)  
\right) \sum\limits^{q}_{l=1} \| R_{M}( \bar{g}_l) - R_{M}(\bar{e}_{l}) \|_{ C^{\frac{\alpha_1}{2k}, \alpha_1}([0,T]\times [0,1])} 
\\ \nonumber
& ~~  +C \left( K_1, \|R_M\|_{C^{1}(M)}\right) \cdot 
\\ \nonumber 
& ~~~~~~~~~~~~~~~~~~~~~ 
\sum_{\substack{l\in\{1, \ldots, q\} \\ i\in\{0, \ldots, 2k-1\} }}
\|W_i(\partial_y^{i}\bar{g}_l, \ldots, \bar{g}_l)- W_i(\partial_y^{i}\bar{e}_{l}, \ldots, \bar{e}_{l})\|_{ C^{\frac{\alpha_1}{2k}, \alpha_1}([0,T] \times [0,1])} 
\\ \nonumber
& ~~ +C \left(\lambda,\|R_M\|_{C^{0}(M)}, C(M) \right) 
\sum_{\substack{l\in\{1, \ldots, q\} \\ j\in\{0, \ldots, 2k-1\} }} 
\|\partial_y^{j}\bar{g}_l-\partial_y^{j}\bar{e}_{l}\|_{C^{\frac{\alpha_1}{2k}, \alpha_1}([0,T] \times [0,1])} 
\\ \nonumber
& \leq  C\left(n, \lambda, K_1, \|R_M\|_{C^{1}(M)}, C(M) 
\right) 
\cdot 
\|\bar{g}_l- \bar{e}_l\|_{ C^{\frac{2k+\alpha_1}{2k}, 2k+\alpha_1}([0,T] \times [0,1])} 
\cdot T^{\beta_1} 
\text{.}
\end{align} 
Similarly, for the estimates of terms involving $W_1(\partial_y \bar{h}_l, \bar{h}_l)- W_1(\partial_y \bar{f}_l, \bar{f}_l)$, 
we establish the estimate:  
\begin{align}
\label{est:W_1-(h-f)}
\nonumber
&\|W_1(\partial_y \bar{h}_l, \bar{h}_l)- W_1(\partial_y \bar{f}_l, \bar{f}_l)\|_{ C^{\frac{\alpha_2}{2}, \alpha_2} ([0,T] \times [0,1])} 
\\
& ~~~~~~~~~~~~~~~~~~~~ \leq 
C\left(n, K_1, C(M) 
\right) \cdot T^{\beta_2}  \cdot \| \bar{h}_l-\bar{f}_l \|_{  C^{\frac{2+\alpha_2}{2}, 2+\alpha_2}([0,T]\times [0,1])}
\text{.} 
\end{align} 
By applying the inequalities \eqref{interpolation-inequality-x-1} and \eqref{eq:normapp1} in Lemma \ref{lem:T^beta-norm0} with the choices, $\ell=0$, $\alpha'=\frac{1+\alpha_1}{k}$, and $\alpha=\alpha_2$, we obtain 
\begin{align*}
\nonumber
\|\partial_y \bar{h}_l(\cdot, y^{\ast}_l) - \partial_y\bar{f}_l(\cdot, y^{\ast}_l)\|_{C^{\frac{1+\alpha_1}{2k}}([0,T])} \leq &\|\partial_y \bar{h}_l - \partial_y \bar{f}_l \|_{C^{\frac{2+\alpha_2}{2}, 2+\alpha_2}([0,T] \times [0,1])} \\
\leq & C T^{\beta_2} \|\bar{h}_l -\bar{f}_l \|_{C^{\frac{2+\alpha_2}{2}, 2+\alpha_2}([0,T] \times [0,1])},
\end{align*}
where $C>0$ is a constant. 
Together with \eqref{est:(g,h)-(e,f)}, \eqref{eq:G-estimate-3}, and \eqref{est:W_1-(h-f)}, we may choose a sufficiently small   
$t_0 = t_0 \left(\lambda, n, \sigma, \alpha_1, \alpha_2, g_0, h_0, K_1, M \right)$ 
such that $t_0\in (0, t_1)$ and the term 
$C_2 T^{\beta}$ in \eqref{est:contraction-map-gamma+chi} fulfills 
$$ 
C_2 T^{\beta}<1,\, \forall\,  T\in(0, t_0]. 
$$  
With the existence of such $t_0>0$, 
the operator 
$$
\mathcal{G} :X^{T}_{(g_0,h_0)} \cap B_{K_1} \to X^{T}_{(g_0,h_0)} \cap B_{K_1}
$$ 
not only represents a self-map but also serves as a strict contraction map for any $T\in(0,t_0]$.

By applying the Banach Fixed Point Theorem to the set $X^{t_{0}}_{(g_0,h_0)} \cap B_{K_1}$, we conclude that there exists a \emph{unique} fixed point
$(g,h) \in X^{t_{0}}_{(g_0,h_0)} \cap B_{K_1}$, which is also a solution to
\eqref{AP-q-even-fitting} with the initial-boundary conditions
\eqref{BC-q-even-fitting}.

\item[\bf (ii).] {\bf Higher-Order Smoothness}: 
The higher-order smoothness of local solutions to \eqref{AP-q-even-fitting} with the initial-boundary conditions \eqref{BC-q-even-fitting}. 

{\bf $\bullet$ Smoothing.} 

From Part (i), we have established the existence of solutions to 
\eqref{AP-q-even-fitting} along with the initial-boundary conditions specified in \eqref{BC-q-even-fitting}. With these solutions in place, we can interpret the nonlinear parabolic system formally as a linear parabolic system, and then apply Solonnikov's theory (Theorem \ref{Solonnikov-theorem}) to deduce higher regularity, as the following. 

Denote by $c_l=c_l(t, y)=G_l(\partial_y^{2k-1} g_{l}, \ldots, \partial_y g_{l}, g_{l})$, $l\in\{1, \ldots, q\}$, 
and $d_l=d_l(t, y)=\sigma^2 W_1(\partial_y h_l, \partial_y h_l)$, 
$l \in\{1, \ldots, q-1\}$. 
Then, we rewrite the nonlinear parabolic system, \eqref{AP-q-even-fitting} and \eqref{BC-q-even-fitting} as: 
\begin{equation}
\label{higher-regularity-g-l}
\begin{cases}
\partial_t g_{l}+(-1)^{k}\cdot \partial_y^{2k} g_{l}=c_l, 
~~~~~~~~~~~~~~~~~~~~~~~~~~~~~~~~~~~~~~~~~~ l \in\{1, \ldots, q\}, 
\\ 
g_{l}(0,y)= g_{l,0}(y), 
~~~~~~~~~~~~~~~~~~~~~~~~~~~~~~~~~~~~~~~~~~~~~~~~~~~~ 
l\in \{1, \ldots, q\}, 
\\
g_{1}(t,0)= p_{0}, 
~~~~~~~~~ 
g_{q}(t, 2\{\frac{q}{2}\})=p_{q},  
\\ 
\partial_{y}^{\mu}g_{1}(t,0)= 
b^{\mu}_{1, x_0},   
~~ 
\partial_{y}^{\mu}g_{q}(t, 2\{\frac{q}{2}\})= 
(-1)^{(q-1)\cdot\mu} b^{\mu}_{q, x_q}, 
\mu \in \{1, \ldots, k-1\},  
\\ 
g_{l}(t, y^{\ast}_l)  
=g_{l+1}(t, y^{\ast}_{l}), 
~~~~~~~~~~~~~~~~~~ 
l \in \{1, \ldots, q-1\}, ~
y^{\ast}_l= 1- 2\{\frac{l+1}{2}\}, 
\\ 
\partial_{y}^{\mu-1}g_{l}(t,y^{\ast}_l)
+ (-1)^{\mu-1} \partial_{y}^{\mu} g_{l+1}(t,y^{\ast}_{l})=0, 
\\ 
~~~~~~~~~~~~~~~~~~~
\mu \in \{1, \ldots, 2k-2\}, 
~ l \in \{1, \ldots, q-1\}, 
~ y^{\ast}_l= 1- 2\{\frac{l+1}{2}\},  
\\ 
\partial_{y}^{2k-1} g_{l}(t,y^{\ast}_l) 
+\partial_{y}^{2k-1} g_{l+1}(t,y^{\ast}_{l})=\frac{(-1)^{k}}{\sigma^2}\partial_{y}h_{l}(t,y^{\ast}_l), 
\\ 
~~~~~~~~~~~~~~~~~~~~~~~~~~~~~~~~~~~~~~~~~~~~~~ 
l \in \{1, \ldots, q-1\}, ~
y^{\ast}_l= 1- 2\{\frac{l+1}{2}\},  
\end{cases} 
\end{equation} 
and 
\begin{equation}
\label{higher-regularity-h-l}
\begin{cases}
\partial_t h_{l}-\sigma^2 \partial_y^{2} h_{l}=d_l,  
~~~~~~~~~~~~~~~~~~~~~~~~~~~~~~~~~~~~~~~~~ 
l \in\{1, \ldots, q-1\}, 
\\
h_{l}(0,y)= h_{l,0}(y), 
~~~~~~~~~~~~~~~~~~~~~~~~~~~~~~~~~~~~~~~~~~~~ 
l\in \{1, \ldots, q-1\}, 
\\
h_{l}(t, y^{\ast}_l)
=g_{l}(t, y^{\ast}_l) 
~~~~~~~~~~~~~~~~~~~~ 
l \in \{1, \ldots, q-1\}, ~
y^{\ast}_l= 1- 2\{\frac{l+1}{2}\}, 
\end{cases} 
\end{equation} 
where $t \in [0,T]$, $y \in [0,1]$. 
According to Part (i), 
we have established the existence of solutions to \eqref{higher-regularity-g-l} and \eqref{higher-regularity-h-l}, denoted by $(g,h)$, possessing the regularity: 
$g_{l} \in C^{\frac{2k+\alpha_1}{2k}, 2k+\alpha_1 }\left( [0,t_0] \times [0,1] \right)$,\\
$h_{l} \in C^{\frac{2+\alpha_2}{2}, 2+\alpha_2}\left( [0,t_0] \times [0,1] \right)$, $\forall\,l$. 
Given this, we can infer that: 
$$
c_l \in C^{\frac{1+\alpha_1}{2k}, 1+\alpha_1}\left( [0,t_0] \times [0,1] \right), 
~~~ 
d_l \in C^{\frac{1+\alpha_2}{2}, 1+\alpha_2}\left( [0,t_0] \times [0,1] \right), ~~~ \forall\, l.
$$

Considering \eqref{higher-regularity-g-l} and \eqref{higher-regularity-h-l} as a linear parabolic system, it satisfies the parabolicity conditions, compatibility conditions, and complementary boundary conditions. Since the prerequisites for Theorem \ref{Solonnikov-theorem} are met, we apply this theorem to deduce that the solutions to \eqref{higher-regularity-g-l} and \eqref{higher-regularity-h-l} exhibit enhanced regularity: 
$$
g_{l} \in C^{\frac{2k+1+\alpha_1}{2k}, 2k+1+\alpha_1}\left( [0,t_0] \times [0,1] \right), 
~~ 
h_{l} \in C^{\frac{2+1+\alpha_2}{2}, 2+1+\alpha_2}\left( [0,t_0] \times [0,1] \right), 
~~~~ \forall\, l. 
$$
Note that when $(g, h) \in X^{t_0}_{(g_0,h_0)}$ possesses the regularity: 
$$
g_l \in C^{\frac{2k+\varrho_1}{2k},2k+\varrho_1}\left([0,t_0] \times [0,1]\right), 
~~~~ 
h_l\in C^{\frac{2+\varrho_2}{2},2+\varrho_2}\left([0,t_0] \times [0,1]\right), 
~~~~ \forall\, l, 
$$ 
we have: 
\begin{align*}
\begin{cases}
\partial_y^j g_{l}(0,y^{\ast})=\partial_y^j g_{l,0}(y^{\ast}), 
~~~~~\forall\, 0 \leq j \leq 2k+\ell, 
\\ 
\partial_y^j h_{l}(0,y^{\ast})=\partial_y^j h_{l,0}(y^{\ast}), 
~~~~~~ \forall\, 0 \leq j \leq 2 +\ell. 
\end{cases}
\end{align*} 
Given the regularity of the coefficients in the linear parabolic system 
\eqref{higher-regularity-g-l} and \eqref{higher-regularity-h-l}, 
and assuming that the compatibility conditions of order $\ell$ for \eqref{eq:higher-order-linear-g_l} and \eqref{eq:second-order-linear-h_l} are satisfied by the initial data $(g_0, h_0)$, we apply a bootstrapping argument to improve the regularity of the solution. 
This iterative process, repeated for 
\eqref{higher-regularity-g-l} and \eqref{higher-regularity-h-l}, 
successively increases the regularity of $c_l$ and $d_l$. After $\ell$ iterations, we ultimately achieve: 
\begin{align*}
\begin{cases}
g_{l} \in C^{\frac{2k+\varrho_1}{2k}, 2k+\varrho_1}\left( [0,t_0] \times [0,1] \right), ~~~~~~\forall\, l\in\{1,\cdots,q\},  
\\ 
h_{l} \in C^{\frac{2+\varrho_2}{2}, 2+\varrho_2 }\left( [0,t_0] \times [0,1] \right), ~~~ \forall\, l\in\{1,\cdots,q-1\} .
\end{cases}
\end{align*}

The approach to proving $g_{l} \in C^{\infty}\left( (0,t_0] \times [0,1] \right)$, $h_{l} \in C^{\infty}\left( (0,t_0] \times [0,1] \right)$, $\forall\, l$, 
closely parallels the method outlined in the previous section for smoothing solutions. This involves constructing smooth cut-off functions with null initial data, verifying the compatibility conditions of higher-order linear parabolic systems, and applying bootstrapping techniques to achieve enhanced regularity. Specifically, the regularity of $g$ and $h$ can be elevated by showing that: 
\begin{equation*}
\begin{cases}
g_{l} \in \bigcap\limits_{\ell=1}^{\infty}C^{\frac{2k+\ell+\alpha_1}{2k},2k+\ell+\alpha_1}\left(\left[ \frac{2\ell-1}{2\ell}\varepsilon, t_0\right] \times [0,1]\right),\\
h_{l} \in \bigcap\limits_{\ell=1}^{\infty}C^{\frac{2+\ell+\alpha_2}{2},2+\ell+\alpha_2}\left(\left[ \frac{2\ell-1}{2\ell}\varepsilon, t_0\right] \times [0,1]\right),
\end{cases}
\end{equation*} 
for arbitrary $\varepsilon \in (0,t_0)$. 
This process has already been carried out in detail in the previous section, so we omit the repetition here.

\item[\bf (iii).]{\bf Regularity of $\gamma$}: 
From \eqref{eq:parametrization-chi}, we derive that  
\begin{align*}
\gamma_{l}(t,x)=&g_{l}\left(t, (-1)^{l+1} 
\left(x-x_{2\left[\frac{l}{2}\right]} \right)\right) , 
~~~~~~~~~~~~~~~ x_{l-1} \leq x \leq x_{l}, ~~ l \in \{1, \ldots, q\},
\\
\chi_{l}(t,x)=&h_{l}\left(t, (-1)^{l+1} 
\left(x-x_{2\left[\frac{l}{2}\right]} \right)\right) , 
~~~~~~~~~~ x_{l-1} \leq x \leq x_{l}, ~~ l \in \{1, \ldots, q-1\}.
\end{align*} 
The proof simply follows from applying (i). 

\end{itemize}

\end{proof}

%%%%%%%%%%%%%%%%%%%%%%%%%%%%%%
%%%%%%%%%%%%%%%%%%%%%%%%%%%%%%
%%%%%%%%%%%%%%%%%%%%%%%%%%%%%%
%%%%%%%%%%%%%%%%%%%%%%%%%%%%%% 

\subsection{Global Solutions and Proof of Theorem \ref{thm:Main_Thm-fitting} } 
\label{Sec:LTE}

In this subsection, we establish the long-time existence of solutions and the asymptotics by deriving uniform bounds for curvature integrals of any order and using a contradiction argument. Based on the uniform bounds, we further analyze the asymptotic behaviour of solutions. The proof of the existence of global solutions builds upon the results developed in \S\ref{Sec:NPSN}, \S\ref{Sec:linear}, \S\ref{Sec:STE}, and a contradiction argument. 
There are mainly two steps in the rest of this subsection. 
(1) Uniform Bounds for Curvature Integrals and Contradiction Argument: 
We employ the Gagliardo–Nirenberg interpolation inequalities together with Gr\"{o}nwall’s inequality to derive uniform bounds on the higher-order derivatives of $\gamma_{l}$ and $\chi_{l}$, valid for all $t$ and any fixed $l$. These estimates enable us to carry out a contradiction argument, thereby concluding the proof of long-time existence; 
(2) Asymptotic Limits: 
Based on the uniform bounds, we proceed to analyze the asymptotic behaviour of the solutions. In particular, we establish the existence of asymptotic limits of $\gamma(t_j,\cdot)$ along certain convergent subsequences.

\bigskip

{\bf The uniform bounds for curvature integrals of any order.} 
Based on the short-time existence result in Theorem \ref{thm:existence-RS-even-fitting}, it remains to prove 
the long-time existence of classical solutions in Theorem \ref{thm:Main_Thm-fitting}, which is based on a contradiction argument and uniform bounds on curvature integrals for any higher order. 
Namely, assume on the contrary that any solution to 
\eqref{eq:E_s-flow-fitting}$\sim$\eqref{eq:BC-higher-order-5-fitting} only exists up to $t_{max}\in (0, \infty)$, i.e., 
either $\gamma_{l}(t_{max},\cdot)$ or $\chi_{l}(t_{max},\cdot)$ fails to be smooth for the first time at $t=t_{max}$ for some $l$. 
Then, we show that 
\begin{align}
\label{eq:Z_mu}
\mathcal{Z}_{\mu}^{\sigma}(t)
:=\sum\limits_{l=1}^{q}\| D_t^{\mu-1} \partial_{t} \gamma_{l}(t,\cdot)\|^2_{L^2(I_{l})} 
+ \frac{1}{\sigma^4} 
\sum\limits_{l=1}^{q-1}\| D_t^{\mu-1} \partial_{t} \chi_{l}(t,\cdot)\|^2_{L^2(I_{l})} 
\end{align} 
remains uniformly bounded up to $t=t_{max}$ for any $\mu \in \mathbb{N}$. 
This gives the contradiction.

From differentiating the first term on the right-hand side of \eqref{eq:Z_mu} with respect to $\partial_t$ and applying Lemmas \ref{lem:D_t-D_x-gamma} and 
\ref{D_x^iD_t^j-f-fitting}, we obtain 
\begin{align*} 
\frac{d}{dt}& \frac{1}{2}\sum\limits_{l=1}^{q}\| D_t^{\mu-1} \partial_{t} \gamma_{l}\|^2_{L^2(I_{i})}
=J_1+ J_2+\sum_{l=1}^{q}\int_{I_{l}} Q^{4k\mu+2k-4, 2k\mu+k-1}_{4}(\gamma_{l,x}) \text{ }dx, 
\end{align*}
where $Q^{a,c}_b(\gamma_{l,x})$ defined in \S\ref{subsection-Notation-terminology} represents a polynomial of $\gamma_{l,x}$, 
\begin{equation} 
\label{definition-I_1-I_2}
\begin{cases}
J_1=(-1)^{k+1} \sum\limits_{l=1}^{q}\int\limits_{I_{l}}\big{\langle} D_t^{\mu}  D_{x}^{2k-1}\gamma_{l, x}, D_t^{\mu-1} \partial_{t} \gamma_{l}\big{\rangle}\text{ }dx,\\
J_2= \sum\limits_{l=1}^{q} \sum\limits^{k}_{r=2}  (-1)^{r+k+1} \int\limits_{I_{l}}\big{\langle} 
   R\left(D_t^{\mu}D^{2k-r-1}_{x}\gamma_{l, x} , D^{r-2}_{x}\gamma_{l, x}\right) \gamma_{l, x}, D_t^{\mu-1}\partial_{t} \gamma_{l} \big{\rangle}\text{ }dx, 
\end{cases}
\end{equation} 
are terms involving the highest-order derivatives. 
We will apply integration by parts to these terms so that  Gr\"{o}nwall inequalities can be derived using interpolation inequalities.  
By letting $V=D_{x}^{2k-2}\gamma_{l,x}$ in \eqref{eq:D_t^j-D_x->D_x-D_t^j} of Lemma \ref{lem:D_t-D_x-gamma}, 
and applying Lemma \ref{D_x^iD_t^j-f-fitting}, along with integration by parts to the term  
$J_1$ in \eqref{definition-I_1-I_2}, 
we obtain     
\begin{align}\label{I_1-first}
J_1=& (-1)^{k+1}\sum_{l=1}^{q}\big{\langle} D_t^{\mu}  D_{x}^{2k-2}\gamma_{l, x}, D_t^{\mu-1} \partial_{t} \gamma_{l} \big{\rangle} \bigg{|}^{x_{l}}_{x_{l-1}} \\
-& (-1)^{k+1}\left(\sum_{l=1}^{q}\int_{I_{l}} \big{\langle} D_t^{\mu}  D_{x}^{2k-2}\gamma_{l, x} , D_{x} D_t^{\mu-1} \partial_{t} \gamma_{l} \big{\rangle}\text{ }dx  
-\sum_{l=1}^{q} \int_{I_{l}} Q^{4k\mu+2k-4, 2k\mu-1 }_{4}(\gamma_{l,x}) \text{ }dx \right).
\nonumber
\end{align} 
Similarly, by letting $V=D_{x}^{2k-3}\gamma_{l, x}$ in \eqref{eq:D_t^j-D_x->D_x-D_t^j} of Lemma \ref{lem:D_t-D_x-gamma}, together with applying  
Lemma \ref{D_x^iD_t^j-f-fitting} and using integration by parts, 
to the terms $\sum\limits_{l=1}^{q}\int\limits_{I_{l}} \big{\langle} D_t^{\mu}  D_{x}^{2k-2}\gamma_{l, x} , D_{x} D_t^{\mu-1} \partial_{t} \gamma_{l} \big{\rangle}\text{ }dx$
in \eqref{I_1-first}, 
we obtain 
\begin{align*} 
&  
\sum_{l=1}^{q}\int_{I_{l}} \big{\langle} D_t^{\mu}  D_{x}^{2k-2}\gamma_{l, x}, D_{x}D_t^{\mu-1} \partial_{t} \gamma_{l} \big{\rangle}\text{ }dx 
\\ 
= & -\sum_{l=1}^{q}\int_{I_{l}} 
\big{\langle} D_t^{\mu}  D_{x}^{2k-3}\gamma_{l, x}, D_{x}^2 D_t^{\mu-1} \partial_{t} \gamma_{l} \big{\rangle}\text{ }dx 
+\sum_{l=1}^{q} \int_{I_{l}} Q^{ 4k\mu+2k-4, 2k\mu }_{4}(\gamma_{l,x}) \text{ }dx 
\text{,}
\end{align*} 
where the boundary terms vanish by applying Lemma \ref{lem:bdry_vanishing}, 
i.e.,  
\begin{align*}
\big{\langle} D_t^{\mu}[\Delta_{l} D_{x}^{2k-3} \partial_{x} \gamma](t), D_{x}D_t^{\mu-1}\partial_{t} \gamma_{l}(t,x_{l}^{-}) \big{\rangle} =0, 
~~~~~~ \forall\, l=1,\cdots, q-1. 
\end{align*} 
Hence,   
\begin{align*} 
J_1 
&=(-1)^{k+1} \sum_{l=1}^{q}\int_{I_{l}} \big{\langle} D_t^{\mu}  D_{x}^{2k-3}\gamma_{l, x}, D_{x}^2 D_t^{\mu-1} \partial_{t} \gamma_{l} \big{\rangle}\text{ }dx 
+\sum_{l=1}^{q} \int_{I_{l}} Q^{ 4k\mu+2k-4, 2k\mu }_{4}(\gamma_{l,x}) \text{ }dx \\
&+(-1)^{k+1}\sum_{l=1}^{q}\big{\langle} D_t^{\mu}  D_{x}^{2k-2}\gamma_{l, x}, D_t^{\mu-1} \partial_{t} \gamma_{l} \big{\rangle} \bigg{|}^{x_{l}}_{x_{l-1}}.
\end{align*} 
By the same procedure as above, 
i.e., by applying 
Lemmas \ref{D_x^iD_t^j-f-fitting}, \ref{lem:bdry_vanishing}, 
and integration by parts to the main terms in $J_1$ 
inductively, we finally derive 
\begin{align*} 
\nonumber
J_1=& -\sum_{l=1}^{q}\int_{I_{l}} \big{\langle} D_t^{\mu}  D_{x}^{k-1}\gamma_{l, x}, D_{x}^k D_t^{\mu-1} \partial_{t} \gamma_{l} \big{\rangle} \text{ }dx 
+\sum_{l=1}^{q} \int_{I_{l}} Q^{4k\mu+2k-4, 2k\mu+k-1}_{4}(\gamma_{l,x}) \text{ }dx \\ \nonumber
&+(-1)^{k+1}\sum_{l=1}^{q}\big{\langle} D_t^{\mu}  D_{x}^{2k-2}\gamma_{l, x}, D_t^{\mu-1} \partial_{t} \gamma_{l} \big{\rangle} \bigg{|}^{x_{l}}_{x_{l-1}} \\ \nonumber
=&-\sum_{l=1}^{q}\int_{I_{l}} | D_{x}^{2k\mu+k-1}\gamma_{l, x} |^2 \text{ }dx 
+\sum_{l=1}^{q} \int_{I_{l}} Q^{4k\mu+2k-4, 2k\mu+k-1}_{4}(\gamma_{l,x}) \text{ }dx \\ 
&+(-1)^{k+1}\sum_{l=1}^{q}\big{\langle} D_t^{\mu}  D_{x}^{2k-2}\gamma_{l, x}, D_t^{\mu-1} \partial_{t} \gamma_{l} \big{\rangle} \bigg{|}^{x_{l}}_{x_{l-1}}.
\end{align*} 
By utilizing the symmetry of the Riemannian curvature tensors, applying integration by parts, and inductively using Lemmas \ref{D_x^iD_t^j-f-fitting} and  \ref{lem:bdry_vanishing}, we can express $J_2$ as
\begin{align*} 
J_2=\sum_{l=1}^{q} \int_{I_{l}} Q^{4k\mu+2k-4, 2k\mu+k-1}_{4}(\gamma_{l,x}) \text{ }dx,
\end{align*} 
which are lower-order terms.

Next, differentiating one-half of the second term on the right-hand side of \eqref{eq:Z_mu} with respect to $\partial_t$, and applying integration by parts along with the identities in Lemmas \ref{lem:D_t-D_x-gamma} and \ref{D_x^iD_t^j-f-fitting}, 
we obtain   
\begin{align*} 
J_{3}
:=& \frac{d}{dt} 
\frac{1}{2\sigma^4} \sum\limits_{l=1}^{q-1}\| D_t^{\mu-1} \partial_{t} \chi_{l}(t,\cdot)\|^2_{L^2(I_{l})}
=\frac{1}{\sigma^2}\sum_{l=1}^{q-1}\int_{I_{l}}\langle D_t^{\mu} D_x \chi_{l, x}, D_t^{\mu-1} \partial_{t} \chi_{l} \rangle \text{ }dx 
\\ \nonumber 
=&\frac{1}{\sigma^2}\sum_{l=1}^{q-1} \langle D_t^{\mu}  \chi_{l, x},  D_t^{\mu-1} \partial_{t} \chi_{l} \rangle  \bigg{|}^{x_{l}}_{x_{l-1}}-\frac{1}{\sigma^2}\sum_{l=1}^{q-1}\int_{I_{l}}\langle D_t^{\mu}  \chi_{l, x}, D_x D_t^{\mu-1} \partial_{t} \chi_{l} \rangle \text{ }dx 
\\ \nonumber
+& \sigma^{4\mu-2} \sum_{l=1}^{q-1}\int_{I_{l}} Q^{4\mu-2, 2\mu-1}_4(\chi_{l,x})\text{} dx 
\\ \nonumber
=&\frac{1}{\sigma^2}\sum_{l=1}^{q-1} \langle D_t^{\mu}  \chi_{l, x},  D_t^{\mu-1} \partial_{t} \chi_{l} \rangle  \bigg{|}^{x_{l}}_{x_{l-1}}
- \sigma^{4\mu-2} \sum_{l=1}^{q-1}\int_{I_{l}} | D_{x}^{2\mu}  \chi_{l, x}|^2 \text{ }dx 
\\ \nonumber 
&~~~~~~~~~~~~~~~~~~~~~~~~~~~~~~~~~~~~~~~~~~~~~~~~~~~~~~~~~~~~ 
+ \sigma^{4\mu-2} \sum_{l=1}^{q-1}\int_{I_{l}} Q^{4\mu-2, 2\mu-1}_4(\chi_{l,x})\text{} dx.
\end{align*} 
Note that the boundary terms in $J_1+J_3$ can be eliminated by applying the boundary conditions \eqref{eq:BC-higher-order-1-fitting}, \eqref{eq:BC-2-order-1-fitting}, \eqref{eq:BC-higher-order-2-fitting}, and \eqref{eq:BC-higher-order-5-fitting}.
Therefore, we obtain the following result: 
\begin{align*} 
\nonumber
J_1+J_3
=&- \sum_{l=1}^{q}\int_{I_{l}} | D_{x}^{2k\mu+k-1}\gamma_{l, x} |^2 \text{ }dx 
- \sigma^{4\mu-2} \sum_{l=1}^{q-1}\int_{I_{l}} | D_{x}^{2\mu}  \chi_{l, x}|^2 \text{ }dx
 \\
&+ \sum_{l=1}^{q} \int_{I_{l}} Q^{4k\mu+2k-4, 2k\mu-1 }_{4}(\gamma_{l,x}) \text{ }dx 
+ \sigma^{4\mu-2} \sum_{l=1}^{q-1}\int_{I_{l}} Q^{4\mu-2, 2\mu-1}_4(\chi_{l,x})\text{} dx. 
\end{align*}
From these identities, we derive 
\begin{align}
\label{derivative-X_mu(t)}
\frac{d}{dt}\mathcal{Z}_{\mu}^{\sigma}(t)
+&\mathcal{Z}_{\mu}^{\sigma}(t)
+2\sum_{l=1}^{q}\int_{I_{l}}|D_{x}^{2k\mu+k-1}\gamma_{l,x}|^2 \text{ }dx
+ 2 \sigma^{4\mu-2} \sum_{l=1}^{q-1}\int_{I_{l}} | D_{x}^{2\mu}\chi_{l, x} |^2 \text{ }dx 
\\
=&\sum_{l=1}^{q}\int_{I_{l}} Q^{4k\mu+2k-4, 2k\mu+k-1}_{4}(\gamma_{l,x}) \text{ }dx 
+ \sigma^{4\mu-2} \sum_{l=1}^{q-1}\int_{I_{l}} Q^{4\mu-2, 2\mu-1}_{4} (\chi_{l,x}) \text{ }dx,
\nonumber 
\end{align} 
for all $t\in (0,t_{max})$. 
Note the short-time existence of classical solutions, established in the previous subsection, ensures the continuity of 
$\mathcal{E}_{k}^{\lambda, \sigma}[(\gamma(t,\cdot), ( \chi_1, \ldots, \chi_{q-1})(t,\cdot))]$ in $t$ for $t\in[0,t_0)$. 
Together with the application of 
Lemmas  
\ref{Younggeneral}, \ref{lem:interpolation-ineq} and the estimates from the energy identity in 
\eqref{eq:L^2-d^{1}gamma} 
to the terms on the right-hand side of \eqref{derivative-X_mu(t)}, 
we find the following expression: 
\begin{align}  
\label{eq:epsilon} 
&\sum_{l=1}^{q}\int_{I_{l}} Q^{4k\mu+2k-4, 2k\mu+k-1}_{4}(\gamma_{l,x}) \text{ }dx
+ \sigma^{4\mu-2} \sum_{l=1}^{q-1}\int_{I_{l}} Q^{4\mu-2, 2\mu-1}_{4}(\chi_{l,x}) \text{ }dx 
\\ \nonumber 
& ~~~~~~~~~~~
\leq  \varepsilon \sum_{l=1}^{q}\int_{I_{l}} | D_{x}^{2k\mu+k-1}\gamma_{l, x} |^2 \text{ }dx
+ \sigma^{4\mu-2} \cdot \varepsilon  \sum_{l=1}^{q-1}\int_{I_{l}} | D_{x}^{ 2\mu} \chi_{l, x} |^2 \text{ }dx  
\\ \nonumber 
& ~~~~~~~~~~~~~~~~~~~ 
+ C_1\left(\varepsilon, \mathcal{E}_{k}^{\lambda, \sigma}(t=0), v_{x^\ast=0}^{k-1},\cdots,v_{x^\ast=0}^{1} \right) 
+ \sigma^{4\mu-2} \cdot 
C_2\left(\varepsilon, \mathcal{E}_{k}^{\lambda, \sigma}(t=0)\right), 
\end{align} 
where $C_1$ also depends on $\lambda$, $q$, 
$k$, $m$, and $\underset{\ell\in\{0, \ldots, 2k\mu+k-1\}}\max \|R_M\|_{C^\ell}$; 
$C_2$ also depends on 
$k$, $m$, and $\underset{\ell\in\{0, \ldots, 2\mu\}}\max \|R_M\|_{C^\ell}$; 
and 
$\mathcal{E}_{k}^{\lambda, \sigma}(t=0):=\mathcal{E}_{k}^{\lambda, \sigma} \big[(\gamma(0,\cdot), (\chi_1, \ldots, \chi_{q-1} ) (0,\cdot)) \big]$. 
Moreover, to have simpler notation for the constants, we only keep the essentially relevant dependencies in the constants below. 
We may choose $\varepsilon=1$ 
in \eqref{eq:epsilon} and apply \eqref{derivative-X_mu(t)} 
to obtain the differential inequality 
\begin{align}
\label{inequality-D^{j-1}_t-partial_{t}gamma-fitting}
&\frac{d}{dt}\mathcal{Z}_{\mu}^{\sigma}(t)
+\mathcal{Z}_{\mu}^{\sigma}(t) 
\leq (1+\sigma^{4\mu-2}) \cdot 
C\left( \mathcal{E}_{k}^{\lambda, \sigma}(t=0), 
v_{x^\ast=0}^{k-1},\cdots,v_{x^\ast=0}^{1} \right) 
\text{,} 
~~~~~~~~~~ \forall\, t\in(0,t_{max}).
\end{align} 
By applying Gr\"{o}nwall inequality to \eqref{inequality-D^{j-1}_t-partial_{t}gamma-fitting}, we obtain 
\begin{align}
\label{eq:mainesti} 
&\mathcal{Z}_{\mu}^{\sigma}(t) 
\leq 
\mathcal{Z}_{\mu}^{\sigma}(t_\ast)+(1+\sigma^{4\mu-2}) \cdot 
C\left(\mathcal{E}_{k}^{\sigma}(t=0), v_{x^\ast=0}^{k-1},\cdots,v_{x^\ast=0}^{1} \right)
=: M_{\mu}(\sigma, t_\ast)  
\text{,} 
\end{align} 
for any $0<t_\ast<t<t_{max}$. 
By applying Lemmas \ref{D_x^iD_t^j-f-fitting}, \ref{Younggeneral}, and \ref{lem:interpolation-ineq} to \eqref{eq:mainesti}, we obtain 
\begin{align}
\label{unif-bdd:D-2kj+k-3-derivative} 
&\sum\limits_{l=1}^{q} \| D_{x}^{2k\mu-1} \gamma_{l,x} \|^2_{L^2(I_{l})} 
+\sigma^{4\mu-4} \sum\limits_{l=1}^{q-1} \| D_{x}^{2\mu-1} \chi_{l,x} \|^2_{L^2(I_{l})} 
\leq M_{\mu}(\sigma, t_\ast) 
\text{,} 
\end{align} 
for any $0<t_\ast<t<t_{max}$. 
Thus, the required uniform estimates for \eqref{eq:Z_mu} up to $t=t_{max}$ are derived for any $\mu\in\mathbb{N}$. 
Therefore, the claim for the long-time existence follows.

Notice that, when $\mu \in \mathbb{N} \cap \big[1, 1 + \frac{\ell}{2k} \big]$,  
we can apply the regularity of local solutions  
from Theorem \ref{thm:existence-RS-even-fitting}, the definition of $\mathcal{Z}_{\mu}^{\sigma}$ in \eqref{eq:Z_mu}, and \eqref{eq:mainesti}    
to ensure that the uniform bounds in 
\eqref{eq:mainesti} and \eqref{unif-bdd:D-2kj+k-3-derivative}  still hold as letting $t_\ast \rightarrow 0$, i.e.,    
\begin{align} 
\label{eq:Z_1-esti_t=0} 
\mathcal{Z}_{\mu}^{\sigma}(t) 
&\leq M_{\mu}(\sigma, t_\ast=0), 
\\ \label{unif-bdd:D_t=0} 
\sum\limits_{l=1}^{q} \| D_{x}^{2k\mu-1} \gamma_{l,x} \|^2_{L^2(I_{l})} 
+\sigma^{4\mu-4} \sum\limits_{l=1}^{q-1} \| D_{x}^{2\mu-1} \chi_{l,x} \|^2_{L^2(I_{l})}  
&\leq M_{\mu}(\sigma, t_\ast=0), 
\end{align} 
$\forall\, t\in[0,t_{max}), 
\forall\, \mu \in \mathbb{N} \cap \big[1, 1 + \frac{\ell}{2k} \big]$.

{\bf The asymptotics.} 
The asymptotic behaviour of the global solutions is established using a similar argument to that in \cite{LST22}. For completeness, we restate the proof below, albeit concisely. 
From the uniform bounds in 
\eqref{unif-bdd:D-2kj+k-3-derivative}, 
we may select a subsequence of 
$\gamma \left(t_{j},\cdot \right)$, 
which converges smoothly to $\gamma_{\infty }(\cdot)$ as $t_{j}\rightarrow \infty$.  
We deduce from the long-time existence and \eqref{eq:mainesti} that 
\begin{align}
\label{eq:Z_mu-L^1}
\mathcal{Z}_{1}^{\sigma} \in L^{1}\left((0,\infty )\right).
\end{align} 
From \eqref{eq:Z_mu}, a direct computation of 
$\frac{d}{dt}\mathcal{Z}_{\mu}^{\sigma}(t)$, 
and applying the H\"{o}lder inequality, 
we derive 
\begin{align} 
\label{eq:Z_mu-ineq}
\left|\frac{d}{dt}\mathcal{Z}_{\mu}^{\sigma}(t)\right| 
\leq  
\left|\mathcal{Z}_{\mu}^{\sigma}(t)\right| 
+\left|\mathcal{Z}_{\mu+1}^{\sigma}(t)\right| 
\text{.}
\end{align}  
From applying \eqref{eq:mainesti}, \eqref{eq:Z_1-esti_t=0}, and  setting $\mu=1$ in \eqref{eq:Z_mu-ineq}, we further yield 
\begin{align}
\label{eq:d_t(Z_mu)} 
\left|\frac{d}{dt}\mathcal{Z}_{1}^{\sigma}(t)\right| 
\leq 
C\bigg( 
\mathcal{E}_{k}^{\lambda, \sigma}(t=0), v_{x^\ast=0}^{k-1},\cdots,v_{x^\ast=0}^{1}, 
\mathcal{Z}_{1}^{\sigma}(0), \mathcal{Z}_{2}^{\sigma}(t_\ast) \bigg), 
~ \forall\, t\in(t_\ast,\infty) 
\text{,} 
\end{align} 
where $(t_\ast,\infty)$ can be set as $(0,\infty)$ as 
$2 \in \big[1, 1 + \frac{[\varrho]}{2k} \big]$.
From \eqref{eq:Z_mu-L^1} and \eqref{eq:d_t(Z_mu)}, 
we conclude that  
$\mathcal{Z}_{1}^{\sigma}(t) \rightarrow 0$, as $t\rightarrow \infty$. 
Together with the regularity of $(\gamma, \chi_{1}, \ldots, \chi_{q-1})$, we further conclude that  
$$
\big(\gamma_\infty, \chi_{1, \infty}, \ldots, \chi_{q-1, \infty} \big) \in \Theta_{\mathcal{P}}, 
$$ 
where 
$\gamma_\infty$ satisfies 
$$\mathcal{L}^{2k}_{x}(\gamma_{l,\infty})=0 
~\text{ on }~ (x_{l-1},x_l),  
~~~ \forall\, l\in \{1, \ldots, q\}, 
$$ 
and 
$\chi_\infty$ satisfies 
$$
D_{x}\partial_{x}\chi_{l,\infty}=0 ~\text{ on }~ (x_{l-1},x_l), ~~~ \forall\, l \in \{1, \ldots, q-1\}.  
$$

%%%%%%%%%%%%%%%%%%%%%%%%%%%%%%%%%%%%%%%%%%%%%%%%%%%%%%%%%%%%%%%%%%%%%%%%%%%%%%%%%%%%%%%%%%%%%%%%%%%%%%%%%%%%%%%%%%%%%%%%%%%%%%%%%%%%%%%%%%%%%%%%

%\section{The Spline Interpolation in Problem 1}
\section{The Proof of Theorem \ref{thm:Main_Thm-fitting-appl}}
\label{Sec:penalty_method}

In this section, we demonstrate in Theorem \ref{thm:Main_Thm-fitting-appl} that Theorem \ref{thm:Main_Thm-fitting} can be applied to obtain solutions to the spline interpolation problems on Riemannian manifolds by introducing suitable penalty terms and penalty parameter $\sigma$.  
As the penalty parameter $\sigma\rightarrow 0$, 
we show that the sequence of solutions to the gradient flow from Theorem \ref{thm:Main_Thm-fitting} admits a convergent subsequence, which converges to global solutions of the gradient flow 
discussed in Theorem \ref{thm:Main_Thm-fitting-appl}. 
In our argument, this procedure requires introducing elliptic and parabolic Sobolev spaces.

To avoid confusion between the typical notation of elliptic Sobolev spaces and Solonnikov's notation for parabolic Sobolev spaces, we use Solonnikov's notation for elliptic Sobolev spaces instead of the typical one for elliptic Sobolev spaces in this section. 
In fact, we adopt Solonnikov's notation from \cite{Solonnikov65} for the parabolic Sobolev spaces $W_{p,t,x}^{~ \ell,2b\ell}([0,T]\times (0,1))$ by reversing the order of the time and space variables, $t$ and $x$, in the notation of parabolic Sobolev spaces in \cite{Solonnikov65} to align with our terminology (see Definition \ref{definition-parabolic-Sobolev-spaces} in \S\ref{subsection-Notation-terminology} for the details). 
Notice that the elliptic Sobolev space, i.e., the space of weakly differentiable functions defined on $(0,T)\times (0,1)\subset\mathbb{R}^2$ whose derivatives up to order $k$ are integrable in the $L^p$-sense w.r.t. the Lebesgue measure $dx\, dt$, 
is denoted as $W^{k}_{p}((0,T)\times (0,1))$ in Solonnikov's notation for elliptic Sobolev spaces.

\vspace{6cm}

\begin{figure}[h]
\setlength{\unitlength}{0.3 mm}
\begin{picture}(60, 45)
\hspace{4cm}
\begin{tikzpicture}[scale=3]
% Axes
%%%%%%%%%%% p_j
\put(-68.5,-43){$p_0$} 
\put(-45.5,43){$p_1$}
\put(-2.5,10){$p_2$}
\put(36.5,-42){$p_3$}
\put(67,8){$p_4$}

%%%%%%%%%%% g_j
\put(-72.0,-1){$\gamma_1$}
\put(-15.5,27){$\gamma_2$}
\put(22.5,-10){$\gamma_3$}
\put(56.5,-30){$\gamma_4$}
\put(55,-99){$M$}

%%%%%%%%red knots
{\color{red}
\put(-79.5,-41){\circle*{2}}
\put(-45.5,38) {\circle*{2}}
\put(13.5,10){\circle*{2}}
\put(36.5,-33.5) {\circle*{2}}
\put(80,0){\circle*{2}}
}

%%%%%%%%curve on sphere
\draw (0,0) circle(1);
\draw[dashed] (0,0) ellipse (1 and .2);
\draw [black, xshift=1cm] plot [smooth, tension=2] coordinates { (-0.2,0) (-0.5,-0.4) (-1.3,0.4) (-1.8,-0.4)};

%%%%%%arrow p_0 to p_1
\qbezier(-82.0,0)(-84,-3)(-86,-6) 
\qbezier(-82.0,0)(-81,-3)(-80,-6) 
%%%%%arrow p_1 to p_2
\qbezier(-15.5,39)(-18.5,41)(-21.5,43)      
\qbezier(-15.5,39)(-18.5,37)(-21.5,35)    
%%%%%arrow p_2 to p_3
\qbezier(18,-10)(19,-7)(20,-4)    
\qbezier(18,-10)(16,-9)(14,-8)   
%%%%%arrow p_3 to p_4
\qbezier(66.5,-45)(63.5,-42)(60.5,-39)       
\qbezier(66.5,-45)(63.5,-47)(60.5,-49)  
%\qbezier(66.5,-45)(68.5,-43)(70.5,-41)       
%\qbezier(66.5,-45)(68.5,-47)(70.5,-49)  

\end{tikzpicture}
\end{picture}
\vspace{0.5cm}
\caption{The orientation of $\gamma$ as $q=4$.} 
\end{figure}

%%%%%%%%%%%%%%%%%%%%%%%%%%%%%%%%%%%%%%%%%%%%%%%%%%%%

The argument is divided into several steps.

\bigskip 

Step $1^\circ$ (Uniform integral bounds for derivatives of 
$\{\gamma_{l}^{\sigma}\}_{\sigma\in(0,1), t>0}$ and $\{\chi_{l}^{\sigma}\}_{\sigma\in(0,1), t>0}$ over the time-slice domains $\{t\}\times \bar{I}_l$.):  
\\ 
First, note that since the compatibility conditions are assumed to hold to arbitrarily high order, the uniform bounds in \eqref{eq:Z_1-esti_t=0} and \eqref{unif-bdd:D_t=0} remain valid for any $\mu \in \mathbb{N}$. 
Moreover, because the initial data $\chi^{\sigma}_{l,0} = \chi_{l,0} = p_l$ for all $l \in \{1, \ldots, q-1\}$ are constant maps, it follows that, for each fixed $\sigma$,
$$
\|\chi^{\sigma}_l(t,\cdot)-\chi_{l,0}\|_{C^j(I_l)}\rightarrow 0, 
~~~~~~\text{ as } t\rightarrow 0, \forall\, j\in\mathbb{N}_0. 
$$ 
Thus, we have 
\begin{align}
\label{eq:E[chi_0]=0}  
\mathcal{E}_{k}^{\lambda, \sigma}\big[(\gamma^{\sigma}(0,\cdot), (\chi^{\sigma}_1, \ldots, \chi^{\sigma}_{q-1} ) (0,\cdot) )\big]
=\mathcal{E}_{k}^{\lambda}\big[\gamma^{\sigma}(0,\cdot) \big]
=\mathcal{E}_{k}^{\lambda}\big[\gamma^{0}(0,\cdot) \big]. 
\end{align} 
In other words, the total energy $\mathcal{E}_{k}^{\lambda, \sigma}\big[(\gamma^{\sigma}(0,\cdot), (\chi^{\sigma}_1, \ldots, \chi^{\sigma}_{q-1} ) (0,\cdot) )\big]$, 
or sometimes denoted as $\mathcal{E}_{k}^{\lambda, \sigma}(t=0)$, 
is independent of $\sigma$. 

\smallskip

The assumption on the compatibility conditions of any order and  
the existence of global solutions $(\gamma^{\sigma},\chi^{\sigma}_1,\cdots,\chi^{\sigma}_{q-1})$ in the H\"{o}lder spaces, i.e., 
\begin{align*}
\begin{cases}
\gamma^{\sigma}_{l}\in C^{\frac{2k+\varrho_1}{2k},2k+\varrho_1}([0,\infty)\times  \bar{I}_{l}), 
\\
\chi^{\sigma}_{l}\in C^{\frac{2+\varrho_2}{2},2+\varrho_2}([0,\infty)\times \bar{I}_{l}), 
\end{cases} 
\end{align*}  
$\forall\, l$,  
ensure that $\mathcal{E}_{k}^{\lambda, \sigma}[(\gamma^{\sigma}(t,\cdot), (\chi^{\sigma}_1, \ldots, \chi^{\sigma}_{q-1})(t,\cdot))]$ 
and $\mathcal{Z}_{\mu}^{\sigma}(t)$ are continuous for all $t\in[0,\infty)$, $\forall\, \mu\in\mathbb{N}$. 
Thus, together with \eqref{eq:E[chi_0]=0}, 
\eqref{eq:Z_1-esti_t=0}, and the definition of $\mathcal{Z}_{\mu}^{\sigma}$ in \eqref{eq:Z_mu},
we have that, 
$\forall\, \mu\in\mathbb{N}$, $\forall\, t\in(0,\infty)$, 
\begin{align}
\label{eq:chi_W^{2,2}_bdd} 
\mathcal{Z}_{\mu}^{\sigma}(t)  
&\leq (1+\sigma^{4\mu-2}) \cdot 
C\left( 
\mathcal{E}_{k}^{\lambda}\big[\gamma_{0} \big], 
v_{x^\ast=0}^{k-1},\cdots,v_{x^\ast=0}^{1}, 
\sum\limits_{l=1}^{q}\| D_t^{\mu-1} \partial_{t} \gamma^{\sigma}_{l}(0,\cdot)\|^2_{L^2(I_{l})} 
\right) 
\\ \nonumber 
&  
\leq (1+\sigma^{4\mu-2}) \cdot 
C\left(
\mathcal{E}_{k}^{\lambda}\big[\gamma_{0} \big], v_{x^\ast=0}^{k-1},\cdots,v_{x^\ast=0}^{1}, 
\sum\limits_{l=1}^{q}\|\gamma_{l,0}\|^2_{W^{2k\mu}_{2}(I_l)}
\right) 
=:\tilde{M}_{\mu}(\sigma)
\text{,}
\end{align} 
where $\gamma^{\sigma}_{l,0}=\gamma_{l,0}$, $\forall\,\sigma>0$, $\forall\, l\in\{1,\cdots,q\}$
is also used in deriving the last inequality. 
Note that the upper bound $C$ in \eqref{eq:chi_W^{2,2}_bdd} also depends on less relevant quantities, 
$k, m, q, \|R_M\|_{C^\ell}$, 
where $\ell=\ell(k,\mu)$, as explained in \eqref{eq:epsilon}.
Moreover, for each fixed $\mu$,  
\begin{align} 
\label{eq:uni_bdd_M_mu}
0\le \tilde{M}_{\mu}(\sigma)\le \tilde{M}_{\mu}(1)
= 2\cdot C\left(
\mathcal{E}_{k}^{\lambda}\big[\gamma_{0} \big], 
v_{x^\ast=0}^{k-1},\cdots,v_{x^\ast=0}^{1}, 
\sum\limits_{l=1}^{q}\|\gamma_{l,0}\|^2_{W^{2k\mu}_{2}(I_l)}
\right), 
~~~ \forall\, \sigma\in(0,1). 
\end{align} 
Using \eqref{eq:E[chi_0]=0}, \eqref{eq:chi_W^{2,2}_bdd}, \eqref{eq:uni_bdd_M_mu}, 
applying Lemmas \ref{D_x^iD_t^j-f-fitting}, \ref{Younggeneral}, \ref{lem:interpolation-ineq} to the definition of $\mathcal{Z}^{\sigma}_{\mu}$ in \eqref{eq:Z_mu}, 
and applying the flow equation in \eqref{eq:E_s-flow-fitting}, 
we obtain that, $\forall\, \mu\in\mathbb{N}$, 
$\forall\, t\in(0,\infty)$, $\forall\, \sigma\in(0,1)$, 
\begin{align}
\label{eq:gamma_W^{2k,2}_bdd} 
&\sum\limits_{l=1}^{q} \| D_{x}^{2k\mu-1} \gamma^{\sigma}_{l,x} \|^2_{L^2(I_{l})} 
\leq 
\tilde{M}_{\mu}(1),  
\\ 
\label{eq:chi_uni_bdd_by_M(1)} 
&\sum\limits_{l=1}^{q-1}\| D_t^{\mu-1} \partial_{t} \chi^{\sigma}_{l}(t,\cdot)\|^2_{L^2(I_{l})} 
\le \tilde{M}_{\mu}(1),   
\\ 
\label{eq:chi_uni_bdd_by_M(1)-1}
&\sum\limits_{l=1}^{q-1}\| D_t^{\mu-1} D_{x} \partial_x \chi^{\sigma}_{l}(t,\cdot)\|^2_{L^2(I_{l})} 
\le \tilde{M}_{\mu}(1) 
\text{.} 
\end{align}

Note that we cannot directly obtain uniform bounds for $\sum\limits_{l=1}^{q-1}\| D_x^{\mu} \partial_x  \chi^{\sigma}_{l}(t,\cdot)\|^2_{L^2(I_{l})}$ for arbitrarily large $\mu$, solely by applying 
\eqref{eq:chi_uni_bdd_by_M(1)} and \eqref{eq:E_s-flow-fitting}. Therefore, in what follows, we consider an alternative Sobolev norm for $\chi_l$ over the space-time domain $(0,T)\times I_l$ to circumvent this difficulty. 
By letting $\mu=1$ in \eqref{eq:chi_uni_bdd_by_M(1)-1} and $\mu=2$ in \eqref{eq:chi_uni_bdd_by_M(1)} and applying the flow equation in \eqref{eq:E_s-flow-fitting}, 
we have 
\begin{align} 
\label{eq:chi_uni_bdd_by_M_2(1)}
\sum\limits_{l=1}^{q-1}
\big(\| D_{x} \partial_x \chi^{\sigma}_{l}(t,\cdot)\|^2_{L^2(I_{l})}
+ \| D_t \partial_t \chi^{\sigma}_{l}(t,\cdot)\|^2_{L^2(I_{l})} 
\big)
\le \tilde{M}_{2}(1), 
~~~ \forall\, \sigma\in(0,1), 
\end{align} 
where $\tilde{M}_{1}(1)$ is absorbed by $\tilde{M}_{2}(1)$ in deriving the last inequality. 
By letting $\mu=2$ in \eqref{eq:chi_uni_bdd_by_M(1)-1}, using \eqref{eq:chi_uni_bdd_by_M_2(1)}, and applying the Gagliardo-Nirenberg interpolation inequalities to  $\| D_t \partial_x \chi^{\sigma}_{l}(t,\cdot)\|^2_{L^2(I_{l})}$, $\forall\, l$, 
we have 
\begin{align} 
\label{eq:chi_uni_bdd_by_M_2(1)-2}
\sum\limits_{l=1}^{q-1}
\| D_t \partial_x \chi^{\sigma}_{l}(t,\cdot)\|^2_{L^2(I_{l})}
\le \tilde{M}_{2}(1), 
~~~ \forall\, \sigma\in(0,1)  
\text{.}
\end{align} 
Moreover, the uniform bound 
\begin{align}
\label{eq:eq:chi_W^{2,2}_bdd}
\| \partial_x \chi^{\sigma}_{l}(t,\cdot)\|^2_{L^2(I_{l})}\le \mathcal{E}_{k}^{\lambda}\big[\gamma^{\sigma}(0,\cdot)\big], 
~~~ \forall\, \sigma\in(0,1), 
\end{align} 
can be obtained from applying 
\eqref{ineq:f_x-higher-order-f} and \eqref{eq:E[chi_0]=0}. 
From the Dirichlet boundary conditions of 
$\chi_l^\sigma$ in \eqref{eq:BC-2-order-1-fitting}, integrating $\partial_x\chi_l^\sigma(t,\cdot)$ w.r.t. space variable $x$, and using \eqref{eq:eq:chi_W^{2,2}_bdd}, we have 
\begin{align}
\label{eq:|chi_l-p_l|->0}
|\chi^{\sigma}_{l}(t,x)-p_l| \le \sigma^2 \cdot 
\sqrt{2\cdot \mathcal{E}_{k}^{\lambda}\big[\gamma^{0}(0,\cdot)\big]},
~~~ \forall\,\sigma>0, ~ \forall\, (t,x)\in [0,T]\times \bar{I}_l, ~ \forall\, l\in\{1,\cdots,q-1\}
\text{,}
\end{align} 
and 
\begin{align} 
\label{eq:chi_uni_bdd_C^0}
\sum\limits_{l=1}^{q-1}
\| \chi^{\sigma}_{l}(t,\cdot)\|^2_{L^\infty(I_{l})}\le C\big(\mathcal{E}_{k}^{\lambda}\big[\gamma^{\sigma}(0,\cdot)\big], p_0, \ldots, p_q\big), 
~~~ \forall\, \sigma\in(0,1)
\text{.}
\end{align} 
From \eqref{eq:chi_uni_bdd_by_M(1)}, \eqref{eq:chi_uni_bdd_by_M(1)-1},  \eqref{eq:chi_uni_bdd_by_M_2(1)}, \eqref{eq:chi_uni_bdd_by_M_2(1)-2}, 
\eqref{eq:eq:chi_W^{2,2}_bdd}, \eqref{eq:chi_uni_bdd_C^0},  
we obtain that, for all $\sigma\in(0,1)$, 
\begin{align*} 
\nonumber 
\sum\limits_{l=1}^{q-1}
&\big( \| \chi^{\sigma}_{l}(t,\cdot)\|^2_{L^2(I_{l})}
+\| \partial_x \chi^{\sigma}_{l}(t,\cdot)\|^2_{L^2(I_{l})}
+ \|\partial_t \chi^{\sigma}_{l}(t,\cdot)\|^2_{L^2(I_{l})} 
+\| D_{x} \partial_x \chi^{\sigma}_{l}(t,\cdot)\|^2_{L^2(I_{l})} 
\\ 
& ~~~~~~~~~~~~~~ 
+ \| D_t \partial_t \chi^{\sigma}_{l}(t,\cdot)\|^2_{L^2(I_{l})} 
+ \| D_t \partial_x \chi^{\sigma}_{l}(t,\cdot)\|^2_{L^2(I_{l})}
\big)
\le 
C\big(\tilde{M}_{2}(1), p_0, \ldots, p_q\big). 
\end{align*} 
Thus, we conclude that 
$\{\chi^{\sigma}_{l}\}_{\sigma\in(0,1)}$ is a bounded subset in the elliptic Sobolev space, 
$W^{2}_{2}((0,T) \times I_{l})$, for any fixed $T>0$. 

\bigskip 

Step $2^\circ$ (Strong convergence in H\"{o}lder norms:   
$\sum\limits_{l=1}^{q}\|\gamma_{l}^{\sigma}-\gamma_{l}^{0}\|_{C^{\mu}([0,T]\times \bar{I}_l)}\rightarrow 0$, as $\sigma\rightarrow 0$, for any $\mu\in\mathbb{N}$, for any $T>0$;  
$\sum\limits_{l=1}^{q-1}\|\chi_{l}^{\sigma}-p_{l}\|_{C^{0}([0,T]\times \bar{I}_l)}\rightarrow 0$, as $\sigma\rightarrow 0$, for any $T>0$.): 
\\ 
From \eqref{eq:gamma_W^{2k,2}_bdd} and together with applying the definition of covariant differentiation given in \eqref{eq:co-deri} and the Gagliardo-Nirenberg interpolation inequalities, we yield that,  
$\forall\, \sigma\in(0,1), \, \forall\,\mu\in\mathbb{N}, 
T\in(0,\infty)$, 
\begin{align}
\label{eq:Z_mu-uniform_bdd}
\sum_{l=1}^{q} 
\|\gamma_{l}^{\sigma}\|_{W_{2,t,x}^{~ \mu,2k\mu}((0,T)\times I_l)}
\le C\left(T, 
\mathcal{E}_{k}^{\lambda}\big[\gamma_{0} \big], 
v_{x^\ast=0}^{k-1},\cdots,v_{x^\ast=0}^{1}, 
\sum\limits_{l=1}^{q}\|\gamma_{l,0}\|^2_{W^{2k\mu}_{2}(I_l)}
\right)  
\text{.}  
\end{align}
Applying \eqref{eq:Z_mu-uniform_bdd}, we obtain that for each $l$ and any $\mu\in\mathbb{N}$, the family 
$\{\gamma^{\sigma}_{l}\}_{\sigma\in(0,1)}$ is a bounded subset in the elliptic Sobolev space 
$W^{\mu}_{2}((0,T) \times I_{l})$ 
with bounds given by $C(\tilde{M}_{\mu}(1),T)$. 
By the extension theorem for Sobolev functions on Lipschitz domains in Euclidean spaces, 
the Rellich-Kondrachov compactness theorem, 
and the Sobolev embedding theorem, we obtain that 
for any $\mu \in \mathbb{N} \setminus \{1\}$, 
$$ 
W^{\mu}_{2}((0,T) \times I_{l})\subset W^{\mu-1}_{p}((0,T) \times I_{l})\subset C^{\mu-2+\alpha_1}([0,T]\times \bar{I}_{l}), 
$$ 
for any $p\in[1,\infty)$, and some $\alpha_1\in(0,1)$.  
Consequently, 
$\{\gamma^{\sigma}_{l}\}_{\sigma\in(0,1)}$ forms a bounded subset in 
$C^{\mu-2+ \alpha_1}([0,T]\times \bar{I}_{l})$. 
By applying the Arzel\`{a}-Ascoli theorem inductively on the order of differentiation to extract convergent subsequences of functions from the previous sequence of functions, we conclude that there exists a convergent subsequence 
$\{\gamma^{\sigma_j}_{l}\}_{j\in\mathbb{N}}
\subset C^{\mu-2}([0,T]\times \bar{I}_{l})$, 
and a limit function 
$\gamma^{0}_{l}\in C^{\mu-2}([0,T]\times \bar{I}_{l})$ 
such that \eqref{eq:gamma^0_in_C^s} holds for any 
$\mu\in \mathbb{N}\setminus\{1\}$. 

\smallskip  

Since Step $1^\circ$ establishes that the family $\{\chi^{\sigma}_{l}\}_{\sigma\in(0,1)}$ is bounded in the elliptic Sobolev space $W^{2}_{2}((0,T) \times I_{l})$  for any fixed $T > 0$, we can apply the Rellich-Kondrachov compactness theorem. This implies that the elliptic Sobolev space $W^{2}_{2}((0,T)\times I_l)$ is compactly embedded into $W^{1}_{p}((0,T)\times I_l)$ for any $p \in [1, \infty)$. 
Moreover, since the time-space domain $\Omega := (0,T) \times I_l \subset \mathbb{R}^2$ has dimension two, the Sobolev embedding theorem guarantees the compact embedding $W^1_p(\Omega) \hookrightarrow C^{0,\beta}(\Omega)$ for any $p > 2$ and some $\beta \in (0,1)$. Combining these results, we obtain the compact embedding,  
$$
W^2_2(\Omega) \hookrightarrow C^{0,\beta}(\Omega), 
$$ 
for some $\beta \in (0,1)$. 
Hence, from \eqref{eq:|chi_l-p_l|->0}, we deduce 
$$
\sum\limits_{l=1}^{q-1}\|\chi_{l}^{\sigma}-p_{l}\|_{C^{0}([0,T]\times \bar{I}_l)}\rightarrow 0,  
~~~~~~ \text{ as } 
\sigma\rightarrow 0.
$$  
In addition, the continuity of the maps, 
$(\gamma^{\sigma}, \chi^{\sigma}_1, \dots, \chi^{\sigma}_{q-1})$, 
implies that 
\begin{align*} 
|\gamma^{\sigma}_{l}(t,x_l)-p_l|\rightarrow 0, 
~~ 
|\gamma^{\sigma}_{l+1}(t,x_l)-p_l|\rightarrow 0, 
~~~~~~\text{ as } \sigma\rightarrow 0,  
\end{align*} 
for all $t \in [0,T]$ and $l \in \{1,\dots,q-1\}$.

\bigskip 

Step $3^\circ$ (Global solutions.): 
\\ 
To extend the result in Step $2^\circ$ from 
$[0,T]\times\bar{I}$ to $[0,\infty)\times\bar{I}$, we may simply apply the diagonal argument along with $0<T_{1}<\cdots<T_{p}<\cdots$. 
to select a convergent subsequence 
$$
\{\gamma^{\sigma_j}_{l}\}_{j\in\mathbb{N}}
\subset C^{\mu-2}([0,\infty)\times\bar{I}_{l}), 
~~~~~~ \forall\, l\in\{1,\cdots,q\}, 
$$ 
in the corresponding topology of H\"{o}lder spaces, with a limit function 
$\gamma^{0}_{l}\in C^{\mu-2}([0,\infty)\times\bar{I}_{l})$. 
Thus, we conclude the strong convergence \eqref{eq:gamma^0_in_C^s},  
which ensures that $\gamma^0$ is indeed a classical global solution to the flow described by \eqref{eq:E_s-flow}$\sim$\eqref{eq:BC-higher-order-3}, and $\gamma^0(t,\cdot)$ passes through the points $p_l$ for all $l \in \{1,\dots,q\}$.

By following Steps $1^\circ$ through $3^\circ$, we have now completed the proof of Statement {\bf (i)}.

\bigskip 

Step $4^\circ$ (Asymptotics.): 
Since $\gamma^{0}_{l}\in C^\infty([0,\infty)\times \bar{I}_l)$, $\forall\, l$,
and $\gamma^{0}$ is a classical global solution to the nonlinear  parabolic problem, we can follow the same procedure outlined in the previous section to obtain the asymptotic limit. 
For details of the argument, we refer the interested reader to the argument on the asymptotics in the proof of 
Theorem \ref{thm:Main_Thm-fitting}. 
The proof of Statement {\bf (ii)} is now completed. 
 
%\end{proof} 

%%%%%%%%%%%%%%%%%%%%%%%%%%%%%%%%%%%%%%%%%%%%%%%%%%%%%%%%%%%%%%%%%%
%%%%%%%%%%%%%%%%%%%%%%%%%%%%%%%%%%%%%%%%%%%%%%%%%%%%%%%%%%%%%%%%%%
%%%%%%%%%%%%%%%%%%%%%%%%%%%%%%%%%%%%%%%%%%%%%%%%%%%%%%%%%%%%%%%%%%

\section{Appendices}
\label{Sec:Appendix} 

\subsection{Notation and Terminology}\label{subsection-Notation-terminology}

For tangent vector fields $v, v_{1}, \ldots, v_{k}$ on $M$,
we denote by 
\begin{equation*}
v_{1}\ast \cdot\cdot\cdot \ast v_{k}=\left\{
\begin{array}{l} 
\langle v_{i_{1}}, v_{i_{2}}\rangle \cdot \cdot \cdot \langle v_{i_{k-1}}, v_{i_{k}}\rangle \text{ \ \ \ \ \ \ \ \ ,\ for }k\text{\ even,} \\
\langle v_{i_{1}}, v_{i_{2}}\rangle \cdot \cdot \cdot \langle v_{i_{k-2}}, v_{i_{k-1}}\rangle \cdot v_{i_{k}}
\text{ , for }k\text{\, odd,}
\end{array}
\right.
\end{equation*}
where $i_{1}, \ldots, i_{k}$ is any permutation of $1, \ldots, k$, $\langle\cdot,\cdot\rangle=\langle\cdot,\cdot\rangle_g$, and $g$ is the Riemannian metric of $M$. 
Slightly more generally, we allow some of the tangent vector fields $v_{i}$ to be scalar functions, in which the $\ast $-product reduces to multiplication. 
The notation $P^{a,c}_b(v)$ represents the summation of finitely many terms 
$D_{x}^{\mu_1} v \ast \cdots \ast D_{x}^{\mu_b} v $  
with $\mu_1+\cdots+\mu_b=a$, $\max\{\mu_1, \ldots, \mu_b\}\le c$, and with universal coefficients, 
depending on the Riemannian curvature tensor $R_M$ and Riemannian metric $g_M$ and their derivatives. 
The universal coefficients and the numbers of terms in $P_b^{a,c}(v)$ 
are assumed to be bounded from above and below by constants depending only on 
$a$, $b$, $K_0$, where $K_0$ is determined by the $C^{\infty}$-norm of the Riemannian curvature tensor $R_M$. 
Note that it is easy to verify the following:
\begin{equation*}
\left\{
\begin{array}{l} 
D_{x}\left( P_{b_{1}}^{a_{1},c_{1}}\left( v \right) \ast P_{b_{2}}^{a_{2},c_{2}}\left( v \right) \right) 
=D_{x} P_{b_{1}}^{a_{1},c_{1}}\left( v \right) \ast P_{b_{2}}^{a_{2},c_{2}}\left( v \right) 
+P_{b_{1}}^{a_{1},c_{1}}\left( v \right) \ast D_{x} P_{b_{2}}^{a_{2},c_{2}}\left( v \right) \text{,} 
\\ \\
P_{b_{1}}^{a_{1},c_{1}}\left( v \right) \ast P_{b_{2}}^{a_{2},c_{2}}\left( v \right)
=P_{b_{1}+b_{2}}^{a_{1}+a_{2},max\{c_1,c_2\}}\left( v \right) \text{, }D_{x}P_{b_{2}}^{a_{2},c_{2}}\left( v \right) 
=P_{b_{2}}^{a_{2}+1,c_{2}+1}\left( v \right) 
\text{.}
\end{array}
\right. 
\end{equation*} 
The notation $Q_{b}^{a, c}(\gamma_{x})$ is introduced to simplify the summation of polynomials. Namely,   
\begin{equation*}
Q_{b}^{a, c}(\gamma_{x}) 
:= \sum_{ \substack{  [\![\mu, \nu]\!]\leq [\![a,b]\!] \\\xi \leq c} }   P_{\nu}^{\mu, \xi}(\gamma_{x}) 
=\sum_{\mu=0}^{a}\sum_{\nu=1}^{2a+b-2\mu}\sum_{\xi=0}^{c}\text{ } P_{\nu}^{\mu, \xi}(\gamma_{x})
\text{,} 
\end{equation*}  
where 
$[\![\mu,\nu]\!]:=2\mu+\nu$, and $a, c \in \mathbb{N}_{0}$, $b \in \mathbb{N}$. 
For simplicity, we also use the notation $P^{a}_{b}\left(\gamma_{x} \right)$ and 
$Q_{b}^{a}(\gamma_{x})$, 
to represent $P^{a,a}_{b}\left(\gamma_{x} \right)$ and 
$Q_{b}^{a, a}(\gamma_{x})$
respectively. 
Additionally, we denote  
$\gamma_x=\partial_{x}\gamma$, $\gamma_t=\partial_{t}\gamma$, and similarly 
$\gamma_{l,x}=\partial_{x}\gamma_l$, $\gamma_{l,t}=\partial_{t}\gamma_l$. 
We may also abuse the notation for covariant differentiation by writing $D^i_x\gamma_l=D^{i-1}_x\partial_x\gamma_l$ or $D^j_t\gamma_l=D^{j-1}_t\partial_t\gamma_l$.

%%%%%%%%%%%%%%%%%%%%%%%%%%%%%%%%%%%%%%%%%%%%%%%%%%%%%%%%%%
%%% Parabolic H\"{o}lder Spaces 
%%%%%%%%%%%%%%%%%%%%%%%%%%%%%%%%%%%%%%%%%%%%%

\smallskip 

For the reader’s convenience, we recall some notation for parabolic H\"{o}lder spaces from \cite[Page 66]{Solonnikov65}, with slight modifications, which will be used throughout our discussion.

\begin{defi}
[{The space of parabolic H\"{o}lder spaces 
$C^{\frac{\varrho}{2k}, \varrho}_{~t, ~x}
\left([0,T] \times [0,1]\right)$} ]
\label{definition-parabolic-Holder-spaces} 

\text{ }

\begin{itemize}
\item[(i)] 
The parabolic continuously differentiable function space 
$C^{r, 2kr}\left([0,T] \times [0,1]\right)$, 
$ r \in \mathbb{N}$, is the set of functions 
$g: [0,T] \times [0,1] \to \mathbb{R}$, 
with 
$\partial_{x}^{\mu}\partial_{t}^{\nu} g \in C^{0}([0,T] \times [0,1])$,  
$\forall\, \nu, \mu \in \mathbb{N}_{0}$, $2\nu+\mu \leq \ell$, 
and with the finite norm 
\begin{align*}
\norm{g}_{C^{r, 2kr}\left( [0,T] \times [0,1]\right)}
=&\sum\limits^{2kr}_{2k\nu+\mu=0} \underset{(t,x) \in  [0,T] \times [0,1]}{\sup} | \partial_{x}^{\mu}\partial_{t}^{\nu}g(t,x) |. 
\end{align*} 

\item[(ii)] 
The parabolic H\"{o}lder space,  
$C^{\frac{\ell+\alpha}{2k}, \ell+\alpha}\left([0,T] \times [0,1]\right)$, $\alpha \in (0,1)$ and $\ell \in \mathbb{N}_{0}$,  
is the set of functions 
$g: [0,T] \times [0,1] \to \mathbb{R}$, 
with  
$\partial_{x}^{\mu}\partial_{t}^{\nu} g\in C^{0}([0,T] \times [0,1])$, 
$\forall\, \nu, \mu \in \mathbb{N}_{0}$, $2\nu+\mu \leq \ell$, 
and with the finite norm 
\begin{align*}
\norm{g}_{C^{\frac{\ell+\alpha}{2k}, \ell+\alpha}\left( [0,T] \times [0,1]\right)}
=&\sum\limits^{\ell}_{2k\nu+\mu=0} \underset{(t,x) \in  [0,T] \times [0,1]}{\sup} | \partial_{x}^{\mu}\partial_{t}^{\nu}g(t,x) |
+\sum\limits_{2k\nu+\mu=\ell}[\partial_{x}^{\mu}\partial_{t}^{\nu}g]_{\alpha, x}
\\
&+\sum\limits_{0<\ell+\alpha-2k\nu-\mu<2k}[\partial_{x}^{\mu}\partial_{t}^{\nu}g]_{\frac{\ell+\alpha-2k\nu-\mu}{2k},t}, 
\end{align*} 
where 
for any $\varrho\in (0,1)$  
\begin{align*}
[ g ]_{\varrho, x}=&\underset{(t,x),(t,x^{\prime}) \in  [0,T] \times [0,1]}{\sup} \frac{| g(t,x)-g(t,x^{\prime}) |}{|x-x^{\prime}|^{\varrho}},
\\
[ g ]_{\varrho,t}=& \underset{(t,x),(t^\prime,x) \in  [0,T] \times [0,1]}{\sup} \frac{| g(t,x)-g(t^\prime,x)|}{|t-t^{\prime}|^{\varrho}}.
\end{align*}

\end{itemize} 
\end{defi}

\begin{rem}[{The parabolic H\"{o}lder spaces and elliptic H\"{o}lder spaces}]
For simplicity, we denote $C^{\frac{\varrho}{2k}, \varrho}_{~t, ~x}
\left([0,T] \times [0,1]\right)$ by $C^{\frac{\varrho}{2k}, \varrho}
\left([0,T] \times [0,1]\right)$, where $\varrho \in (0,\infty) \setminus \mathbb{Z}$. 
To avoid potential confusion between parabolic and elliptic Hölder spaces, we adopt the notation $C^{\varrho}(\bar{I})$ or $C^{\varrho}(\bar{\Omega})$ for elliptic spaces instead of the standard forms 
$C^{[\varrho],\{\varrho\}}(\bar{I})$ or $C^{[\varrho],\{\varrho\}}(\bar{\Omega})$, respectively. Here, $\{\varrho\} = \varrho - [\varrho]$ denotes the fractional part of $\varrho$ and $[\varrho]$ is the greatest integer less than $\varrho$.  
\end{rem}

%%%%%%%%%%%%%%%%%%%%%%%%%%%%%%%%%%%%%%%%%%%%%%%%%%%%%%%%%%
%%% Parabolic Sobolev Spaces 
%%%%%%%%%%%%%%%%%%%%%%%%%%%%%%%%%%%%%%%%%%%%%

\bigskip

\begin{defi}[{parabolic Sobolev spaces}]\label{definition-parabolic-Sobolev-spaces}
    For integers $l \geq 0$, $k \geq 1$, 
and real number $p>1$, the parabolic Sobolev space 
$W^{~ l, 2kl}_{p,t,x}([0,T] \times [0,1])$ is defined as the closure of the set of smooth functions on $[0,T] \times [0,1]$ under the norm 
\begin{align*}
\|u\|_{p,2kl}=\sum_{0 \leq 2k \mu+\nu \leq 2k l} \langle D_x^{\nu} D_t^{\mu} u \rangle_{p,0} 
\text{,}
\end{align*}
where 
\begin{align*}
\langle u \rangle_{p,0}:=\bigg( \int\limits_{[0,T] \times [0,1]} |u(t,x)|^p \, dt dx \bigg)^{\frac{1}{p}}.
\end{align*} 
\end{defi}

%%%%%%%%%%%%%%%%%%%%%%%%%%%%%%%%%%%%%%%%%%%%%%%%%%%%%%%%%%
%%% Elliptic Sobolev Spaces 
%%%%%%%%%%%%%%%%%%%%%%%%%%%%%%%%%%%%%%%%%%%%%

For (elliptic) Sobolev spaces, denoted as $W^k_p(\Omega)$ in this article instead of $W^{k,p}(\Omega)$, we adopt the simplified notation for Sobolev norms as used in \cite{DP14} and our previous works: 
\begin{equation*}
\|g\|_{k,p} 
=\|g\|_{W^{k}_{p}(\Omega)} 
:= \sum_{\ell=0}^{k} \| D_x^{\ell} g \|_{p}, 
~~ \mbox{ where } ~~
\|D_x^{\ell} g\|_{p}
=\|D_x^{\ell} g\|_{L^p(\Omega)} 
:= 
\Big( \int_{\Omega} |D_x^{\ell} g |^p dx \Big)^{1/p} \, 
\text{,}
\end{equation*} 
which is slightly different from the scale-invariant Sobolev norms defined in \cite{DP14}.

\bigskip 

\subsection{Technical Lemmas}\label{subsection-Technical-lemmas}

\begin{lem}
\label{lem:D_t-D_x-gamma}  
Let $\tilde\gamma : (t_0,T_0)\times (0,1)\rightarrow M$ be a smooth map, 
and $V: (t_0,T_0)\times (0,1)\rightarrow T_{\tilde\gamma}M$ be a smooth vector field associated with $\tilde\gamma$.  
Then, 
\begin{align}
D_{t} \partial_{x} \tilde\gamma
=& D_{x} 
\partial_{t} \tilde\gamma 
\text{,}
\label{eq:D_t-D_x^j=D_x^j-D_t}
\\ 
D_{t}D_{x}^{\mu}V
=& D_{x}^{\mu}D_{t}V
+\sum_{\nu=1}^{\mu} D_{x}^{\mu-\nu}\left[R(\tilde\gamma_{t}, \tilde\gamma_{x}) D_{x}^{\nu-1} V \right], 
~~\forall\, \mu \geq 1 
\text{,}
\label{eq:D_t-D_x^j->D_x^j-D_t}
\\ 
D_{t}^{\mu} D_{x}V
=& D_{x}D_{t}^{\mu}V 
+\sum_{\nu=1}^{\mu} D_{t}^{\mu-\nu}\left[R(\tilde\gamma_{t}, \tilde\gamma_{x}) D_{t}^{\nu-1} V \right], 
~~\forall\, \mu \geq 1 
\text{.} 
\label{eq:D_t^j-D_x->D_x-D_t^j}
\end{align}
\end{lem}

\begin{proof} 
By applying Nash's isometric embedding theorem, we can assume that $M$ is a submanifold of $\mathbb{R}^n$ for some integer $n$. 
Using covariant differentiation, we derive the following formulas:   
\begin{align*} 
D_{t}\partial_{x}\tilde\gamma
=\partial_{t}\partial_{x}\tilde\gamma-\sum_{\alpha=m+1}^{n}\langle\partial_{t}\partial_{x}
\tilde\gamma, e_\alpha\rangle e_\alpha, 
\\ 
D_{x}\partial_{t}\tilde\gamma
=\partial_{x}\partial_{t}\tilde\gamma-\sum_{\alpha=m+1}^{n}\langle\partial_{x}\partial_{t}
\tilde\gamma, e_\alpha\rangle e_\alpha. 
\end{align*}
Since $\partial_{t}\partial_{x}\tilde\gamma=\partial_{x}\partial_{t}\tilde\gamma$, we conclude \eqref{eq:D_t-D_x^j=D_x^j-D_t}.

We prove \eqref{eq:D_t-D_x^j->D_x^j-D_t} inductively. 
As $\mu=1$, the equality 
\begin{align}
\label{eq:D_t-D_x->D_x-D_t} 
D_{t}D_{x}V=&D_{x}D_{t}V+R(\tilde\gamma_{t}, \tilde\gamma_{x})V
\end{align}
comes from using the definition of Riemannian curvature tensors $R(\boldsymbol{\cdot},\boldsymbol{\cdot})\boldsymbol{\cdot}$. 
By an induction argument and applying 
\eqref{eq:D_t-D_x->D_x-D_t}, we obtain  
\begin{align*}
D_{t}D_{x}^{\mu+1}V=&D_{t}D_{x} \left(D_{x}^{\mu}V\right)
=D_{x}D_t \left(D_{x}^{\mu}V\right)+R(\tilde\gamma_{t}, \tilde\gamma_{x} ) D_{x}^{\mu}V \\
=&D_{x} \left(D_{x}^{\mu}D_{t}V
+\sum_{\nu=1}^{\mu} D_{x}^{\mu-\nu}\left[R(\tilde\gamma_{t}, \tilde\gamma_{x}) D_{x}^{\nu-1} V \right] \right)
+R(\tilde\gamma_{t}, \tilde\gamma_{x} ) D_{x}^{\mu}V \\
=&D_{x}^{\mu+1}D_{t}V
+\sum_{\nu=1}^{\mu+1} D_{x}^{\mu+1-\nu}\left[R(\tilde\gamma_{t}, \tilde\gamma_{x}) D_{x}^{\nu-1} V \right], 
\end{align*}
which finishes the proof of 
\eqref{eq:D_t-D_x^j->D_x^j-D_t}.

Below, we prove \eqref{eq:D_t^j-D_x->D_x-D_t^j}.   
The case $\mu=1$ is proven in \eqref{eq:D_t-D_x^j->D_x^j-D_t}.
By an induction argument and \eqref{eq:D_t-D_x^j->D_x^j-D_t}, 
we obtain  
\begin{align*}
D_{t}^{\mu+1}D_{x}V=&D_{t} \left(D_{x}D_{t}^{\mu}V
+\sum_{\nu=1}^{\mu} D_{t}^{\mu-\nu}\left[R(\tilde\gamma_{t}, \tilde\gamma_{x}) D_{t}^{\nu-1} V \right]\right)\\
=&D_{x}D_t \left(D_{t}^{\mu}V\right)+R(\tilde\gamma_{t}, \tilde\gamma_{x} ) D_{t}^{\mu}V 
+\sum_{\nu=1}^{\mu} D_{t}^{\mu-\nu+1}
\left[R(\tilde\gamma_{t}, \tilde\gamma_{x}) D_{t}^{\nu-1} V \right] \\ 
=&D_{x} D_{t}^{\mu+1}V
+\sum_{\nu=1}^{\mu+1} D_{t}^{\mu+1-\nu}
\left[R(\tilde\gamma_{t}, \tilde\gamma_{x}) D_{t}^{\nu-1} V \right], 
\end{align*}
which finishes the proof of \eqref{eq:D_t^j-D_x->D_x-D_t^j}. 

\end{proof}

The following technical lemma plays a crucial role in converting higher-order covariant derivatives with respect to the time variable $t$ into covariant derivatives with respect to the space variable $x$, particularly for networks $(\gamma, \chi_{1}, \ldots, \chi_{q-1} )$ satisfying the flow equation \eqref{eq:E_s-flow-fitting}.

\begin{lem} 
\label{D_x^iD_t^j-f-fitting}
Suppose $\gamma$ and $(\chi_1, \ldots, \chi_{q-1})$ are smooth solutions 
fulfilling the parabolic systems in 
\eqref{eq:E_s-flow-fitting} respectively. 
Then, for any $\mu \in\mathbb{N}$, $\ell \in\mathbb{N}_{0}$, 
we have     
\begin{align}
\label{eq:1-higher-order-fitting}
D_{t}^{\mu} D^{\ell}_{x} \gamma_{l}
=&(-1)^{(k+1) \mu} D_{x}^{2k\mu+\ell-1} \gamma_{l,x}+Q^{2k\mu+\ell-3}_{3}(\gamma_{l,x}), 
\\
\label{eq:1-second-order-fitting}
D_{t}^{\mu} D^{\ell}_{x} \chi_{l}
=& \sigma^{2\mu} D_{x}^{2\mu+\ell-1} \chi_{l,x}
+ \sigma^{2\mu} \cdot Q^{2\mu+\ell-3}_{3}(\chi_{l,x}), 
\\
\label{eq:2-second-order-fitting}
D^{\ell}_{x}  D_{t}^{\mu} \chi_{l}
=& \sigma^{2\mu} D_{x}^{2\mu+\ell-1} \chi_{l,x}
+ \sigma^{2\mu} \cdot Q^{2\mu+\ell-3}_{3}(\chi_{l,x}),
\end{align} 
where $D^{\ell}_{x} \gamma_{l}:=D^{\ell-1}_{x} \partial_x\gamma_{l}$, 
$D^{\ell}_{x} \chi_{l}:=D^{\ell-1}_{x} \partial_x\chi_{l}$, as $l\ge 1$ 
and 
$D_{x} \gamma_{l}:=D_{x} \partial_x\gamma_{l}$, 
$D_{x} \chi_{l}:=D_{x} \partial_x\chi_{l}$. 
\end{lem} 

\begin{proof}  
We prove the lemma by induction on $\mu$. 

\smallskip 

$1^\circ$ Base Case: $\mu=1$ and $\ell=0$ or $1$. 

For $\ell=0$, the result follows directly 
from applying \eqref{eq:1-higher-order-fitting}. 

For $\ell=1$, it follows as a consequence of applying 
\eqref{eq:D_t-D_x^j=D_x^j-D_t} and \eqref{eq:E_s-flow-fitting}. 

\smallskip 

$2^\circ$ Inductive Step: $\mu=1$ and $\ell \geq 2$. 

Applying Lemma \ref{lem:D_t-D_x-gamma}  
and \eqref{eq:1-higher-order-fitting}, we obtain the following:  
\begin{align}
\label{D_t-D^s_x}
\nonumber
 D_{t} D^{\ell}_{x} \gamma_{l}=& D_{t} D^{\ell-1}_{x} \gamma_{l, x}=
D^{\ell-1}_{x}D_{t} \gamma_{l,x}+\sum^{\ell-1}_{r=1}D_{x}^{\ell-1-r}\left[R(\gamma_{l, t}, \gamma_{l, x}) D^{r-1}_{x}\gamma_{l, x}\right] \\  \nonumber
=&D^{\ell}_{x} \gamma_{l, t}+\sum^{\ell-1}_{r=1}D_{x}^{\ell-1-r}\left[R(\gamma_{l, t}, \gamma_{l, x}) D^{r-1}_{x}\gamma_{l, x}\right] \\
=& (-1)^{k+1}D_{x}^{2k+\ell-1}\gamma_{l, x}+Q^{2k+\ell-3}_{3}(\gamma_{l,x}). 
\end{align}
Hence, the lemma holds for $\mu=1$. 

By applying \eqref{D_t-D^s_x} and the definition of 
$P^{a, c}_{b}(\gamma_{l,x})$ along with an induction argument, 
we obtain 
\begin{align}
\label{general-formula-Q^a-c_b}
D_{t}P^{a,c}_{b}(\gamma_{l,x})=Q^{a+2k,c+2k}_{b}(\gamma_{l,x}).
\end{align} 
Using \eqref{D_t-D^s_x} and \eqref{general-formula-Q^a-c_b}, we obtain   
\begin{align*}
 D_{t}^{\mu} D^{\ell}_{x} \gamma_{l} =& D_{t} (D_{t}^{\mu-1} D^{\ell}_{x} \gamma_{l} ) 
=D_{t}\left((-1)^{(k+1) (\mu-1)} D_{x}^{2k(\mu-1)+\ell-1} \gamma_{l,x}+Q^{2k(\mu-1)+\ell-3}_{3}(\gamma_{l,x})\right)\\
=&(-1)^{(k+1)\mu} D_{x}^{2k\mu+\ell-1} \gamma_{l,x}+Q^{2k\mu+\ell-3}_{3}(\gamma_{l,x}). 
\end{align*}
This completes the proof of \eqref{eq:1-higher-order-fitting}.

The proof of \eqref{eq:1-second-order-fitting} follows the same reasoning as that of \eqref{eq:1-higher-order-fitting}. 
Meanwhile, the proof of \eqref{eq:2-second-order-fitting} results from applying Lemma \ref{lem:D_t-D_x-gamma} and \eqref{eq:1-second-order-fitting}.
The detailed steps are omitted and left to the interested readers. 

\end{proof}

\begin{lem}\label{lem:bdry_vanishing} 
Let $\gamma_{l}: (t_0,T_0)\times I_{l}\rightarrow M$ be a smooth solution to 
\eqref{eq:E_s-flow} for any $l\in\{1,\cdots,q\}$. 
Then, we have the followings:   
\begin{itemize}
\item[(i)] 
\begin{align}
\label{BC-1}
D^{\mu}_{x}D_{t}^{\ell} \gamma_{1}(t,x_0)=0, 
~ 
D^{\mu}_{x}D_{t}^{\ell} \gamma_{q}(t,x_q)=0, 
~ \forall \, \mu \in \{0, \ldots, k-1\}, \, \ell \in \mathbb{N}, \,  t \in (t_0,T_0) 
\text{.} 
\end{align}
\item[(ii)] 
As 
$l \in \{1, \ldots, q-1\}$, 
\begin{align}
\label{BC-2} 
D^{\mu}_{x}D_{t}^{\ell}\partial_{t}\gamma_{l+1}(t,x_{l}^{+})=D^{\mu}_{x}D_{t}^{\ell}\partial_{t}\gamma_{l}(t,x_{l}^{-}), 
~ \forall\, \mu \in \{0, \ldots, 2k-2\}, \, \ell \in \mathbb{N}_{0}, \, t \in (t_0,T_0) 
\text{.}
\end{align}
\end{itemize}
\end{lem}

\begin{proof}

We prove \eqref{BC-1} and \eqref{BC-2} inductively on $\ell$. 

(i) 
As $\ell=1$, we apply \eqref{eq:D_t-D_x^j=D_x^j-D_t} and \eqref{eq:D_t-D_x^j->D_x^j-D_t} to obtain 
\begin{align} \label{eq:identity-1} 
D_{x}^{\mu} D_{t} \gamma_{l}=D^{\mu-1}_{x} D_{t} D_x \gamma_{l}
= & D_t D^{\mu-1}_{x} \gamma_{l,x} 
- \sum_{\nu=1}^{\mu-1} D_{x}^{\mu-1-\nu}\left[R(\gamma_{l,t}, \gamma_{l,x}) D_{x}^{\nu-1} \gamma_{l,x}  \right] 
\text{.} 
\end{align} 
From applying the boundary conditions 
\eqref{eq:BC-higher-order-1}, \eqref{eq:BC-higher-order-2} at the two end-points, 
the right hand side of \eqref{eq:identity-1} equals zero. 
Thus, \eqref{BC-1} holds as $\ell=1$. 

As $\ell\ge 2$, we apply \eqref{eq:D_t-D_x^j->D_x^j-D_t} to obtain    
\begin{align} 
\label{eq:identity-2}
D^{\mu}_{x}D_{t}^{\ell} \gamma_{l}
=&D^{\mu}_{x}D_t (D_{t}^{\ell-1} \gamma_{l} )
=D_t D^{\mu}_{x} (D_{t}^{\ell-1} \gamma_{l} ) 
- \sum_{\nu=1}^{\mu} D_{x}^{\mu-\nu}\left[R(\gamma_{l,t}, \gamma_{l,x}) D_{x}^{\nu-1} D_{t}^{\ell-1} \gamma_{l} \right]
\text{.}
\end{align}
By applying the boundary conditions \eqref{eq:BC-higher-order-1}, \eqref{eq:BC-higher-order-2} at the two end-points, and by an induction argument on $\ell$, 
the right hand side of \eqref{eq:identity-2} vanishes. 
Therefore, \eqref{BC-1} holds.   

(ii) 
The proof of \eqref{BC-2} is also an induction argument. We proceed as above by applying the boundary conditions \eqref{eq:BC-higher-order-1} and \eqref{eq:BC-higher-order-3} to \eqref{eq:identity-2} 
to conclude that the right hand side of \eqref{eq:identity-2} is continuous at the boundary points $\{x_1,\cdots,x_{q-1}\}$, which offers the required continuity. Thus, \eqref{BC-2} is proven. 

\end{proof}

\begin{lem}[{\cite[(3.5)]{MSK10}}]
\label{first-variation-higher-order}
Let $0=x_0<x_1 <\cdots<x_{q}=1$, $I_{l}=(x_{l-1},x_{l})$, and $\Omega$ denote the set of all $C^{2k-2}$ paths $\gamma : [0,q] \to M$ such that 
$\gamma_{l}:=\gamma_{|_{\bar{I}_{l}}}$ is smooth $C^{\infty}$. We define the tangent space of $\Omega$ at a path $\gamma$, $T_{\gamma}\Omega$, 
as the set of all $C^{k-1}$ vector fields $W: [0,q] \to TM$ such that $W_{|_{[x_{l-1},x_{l}]}}$ is smooth.
If $\alpha : (-a,a) \times [0,q] \to M$ is a one-parameter variation of $\gamma \in \Omega$ is defined by 
\begin{align*}
\alpha(\varepsilon,x)=exp_{\gamma}(\varepsilon W(x))
\end{align*}
 and $W \in T_{\gamma}\Omega$ is
the variational vector field associated to $\alpha$, 
then
\begin{align*}
&\frac{d}{d\varepsilon} \big\vert_{\varepsilon=0} \mathcal{E}_{k}[\alpha(\varepsilon,x)] 
\\ 
& =
(-1)^{k} \sum^{q}_{l=1} \int_{I_{l}} \bigg{\langle}   
D_{x}^{2k-1} \partial_{x} \gamma_{l} 
+\sum^{k}_{\mu=2} (-1)^{\mu} 
R\left(D^{2k-\mu-1}_{x} \partial_{x} \gamma_{l}, 
D^{\mu-2}_{x} \partial_{x} \gamma_{l}\right) \partial_{x} \gamma_{l} , 
W \bigg{\rangle} \text{ }dx 
\\ 
~~~ & +\sum^{k}_{\ell=1} \sum^{q}_{l=1}(-1)^{\ell-1}  
\bigg{\langle} D^{k-\ell}_{x}W, 
D^{k+\ell-2}_{x}\partial_{x} \gamma_{l} \bigg{\rangle} \bigg{|}^{x^{-}_{l}}_{x^{+}_{l-1}} 
\\ 
~~~ &+\sum^{k-1}_{\mu=2}\sum^{k-\mu}_{\ell=1}\sum^{q}_{l=1} (-1)^{\ell-1}  \bigg{\langle} 
D^{k-\mu-\ell}_{x} 
\left[R(W, \partial_{x} \gamma_{l}) D^{\mu-2}_{x}\partial_{x} \gamma_{l}\right], 
D^{k+\ell-2}_{x}\partial_{x} \gamma_{l} 
\bigg{\rangle} \bigg{|}^{x^{-}_{l}}_{x^{+}_{l-1}}
\end{align*}
and 
\begin{align*}
\frac{d}{d\varepsilon} \big\vert_{\varepsilon=0} \mathcal{T}[\alpha(\varepsilon,x)]
= -\sum_{l=1}^{q}\int_{I_{l}} \langle D_{x}\partial_{x} \gamma_{l} , W\rangle \text{ }dx 
+\sum_{l=1}^{q} \langle W,  \partial_{x} \gamma_{l} \rangle \bigg{\vert}^{x_{l}}_{x_{l-1}}. 
\end{align*}
\end{lem}

The following three lemmas are straightforward extensions of the results presented in \cite[Remark B.1]{DLP21}, \cite[Remark B.2]{DLP21}, and \cite[Remark B.5]{DLP21}, corresponding to the cases when $k=2$ respectively.

\begin{lem}[{cf. \cite[Remark B.1]{DLP21}}]
\label{lem:RemarkB1} 
For $\alpha \in (0,1)$, $T>0$, $m\leq \ell$ and $m, \ell \in \mathbb{N}_0$, we have 
$$ C^{ \frac{\ell+\alpha}{2k}, \ell+\alpha}([0,T]\times[0,1]) \subset  C^{ \frac{m+\alpha}{2k},m+\alpha}([0,T]\times[0,1]) . $$
Moreover, if $v \in C^{ \frac{\ell+\alpha}{2k}, \ell+\alpha}([0,T]\times[0,1])$, then $\pa_x^l v \in C^{ \frac{\ell-l+\alpha}{2k}, \ell-l+\alpha}([0,T]\times[0,1])$ for all $0 \leq l\leq \ell$ so that
$$\|\pa_x^l v \|_{C^{ \frac{\ell-l+\alpha}{2k}, \ell-l+\alpha}([0,T]\times[0,1])} \leq \| v\|_{C^{ \frac{\ell+\alpha}{2k}, \ell+\alpha}([0,T]\times[0,1])}.$$
In particular at each fixed $x \in [0,1]$, we have 
$\pa_x^l v (\cdot,x )\in C^{s,\beta}([0,T])$ 
with $s =[\frac{\ell-l+\alpha}{2k}]$ and $\beta = \frac{\ell-l+\alpha}{2k}-s$. 
\end{lem}

\begin{proof}
The proof of this lemma follows from the definition. 
\end{proof}

\begin{lem}[{cf. \cite[Lemma B.2]{DLP21}}]
\label{lem:tech2} 
For $\ell \in \mathbb{N}_0$, $\alpha \in (0,1)$ and $T>0$, we have 
if $v,w \in C^{\frac{\ell+\alpha}{2k},\ell+\alpha}([0,T]\times [0,1])$, then
$$ \| v w \|_{C^{\frac{\ell+\alpha}{2k}, \ell+\alpha}} \leq C \| v \|_{C^{\frac{\ell+\alpha}{2k},\ell+\alpha}} \| w \|_{C^{\frac{\ell+\alpha}{2k},\ell+\alpha}} \, ,$$
with $C=C(\ell)>0$. 
\end{lem} 
\begin{proof}
The proof of this lemma follows by a direct computation and the definition of parabolic H\"{o}lder spaces. 
\end{proof}

\begin{lem} 
\label{lem:T^beta-norm0} 
Let $l,m \in \mathbb{N}_0$, $l+m<2k$, $\alpha\in(0,1)$, $k\in\mathbb{N}$, $T\in(0,1)$ and $v \in C^{ \frac{2k+\alpha}{2k}, 2k+\alpha}([0,T]\times[0,1])$ such that $v(0,x)= 0$, for any $x \in [0,1]$.  
Then, 
\begin{align}
\label{interpolation-inequality}
\| \pa_x^l v \|_{C^{\frac{m+\alpha}{2k}, m+\alpha}([0,T]\times[0,1])} \leq C(m) T^{\beta} \|  v \|_{C^{\frac{2k+\alpha}{2k}, 2k+\alpha}([0,T]\times[0,1])}  
\text{,} 
\end{align}
where $\beta=\min \{ \frac{1-\alpha}{2k}, \frac{\alpha}{2k} \}$ as $l\in\mathbb{N}_0$, and $\beta=\frac{\alpha}{2k}$ as $l\in\mathbb{N}$. 
Moreover, for each fixed $x \in [0,1]$, we have  
\begin{align} 
\label{interpolation-inequality-x}
\|\pa_x^l v (\cdot, x)\|_{C^{\frac{m+\alpha}{2k}}([0,T])} 
\leq 
\| \pa_x^l v \|_{C^{\frac{m+\alpha}{2k}, m+\alpha}([0,T]\times[0,1])} 
\leq C(m) T^{\beta} \|  v \|_{C^{\frac{2k+\alpha}{2k}, 2k+\alpha}([0,T]\times[0,1])} 
\text{.}  
\end{align} 
In addition, if $\alpha^\prime\in [0, \alpha]$, then we have 
\begin{align}
\label{interpolation-inequality-x-1}
\| v (\cdot, x)\|_{C^{\frac{2k+\alpha^\prime}{2k}}([0,T])} 
\leq \|  v \|_{C^{\frac{2k+\alpha}{2k}, 2k+\alpha}([0,T]\times[0,1])} 
\text{.} 
\end{align} 
\end{lem}

\begin{proof}
The proof mainly follows the argument presented in \cite[Lemma B.5]{DLP21} with the distinction that $k=2$ therein.

Recall that $0\le m,l <2k$ and by the definition of H\"{o}lder-norms, we have 
\begin{align}\label{eq:normapp1}
\|\partial_{x}^{l} v \|_{C^{ \frac{m+\alpha}{2k}, m+\alpha}([0,T]\times[0,1])} =
\sum_{j=0}^{m} \sup_{[0,T] \times [0,1]} |\partial_{x}^{j+l}v (t,x)| +  [\partial_{x}^{m+l}v ]_{\alpha,x} + \sum_{j=0}^{m}  [\partial_{x}^{j} \partial_{x}^{l} v ]_{\frac{m+\alpha -j}{2k},t}
\text{.} 
\end{align} 
For $j \in \{ 0, \ldots, m \}$, we have $0<l+j \leq l+m  < 2k$, and 
\begin{align*}
\sup_{(t,x) \in [0,T] \times [0,1]} |\partial_{x}^{j+l}v (t,x)| &= \sup_{(t,x) \in [0,T] \times [0,1]} \frac{|\partial_{x}^{j+l}v (t,x) -\partial_{x}^{j+l}v (0,x) |}{|t-0|^{\frac{2k+\alpha -(l+j)}{2k}}}|t-0|^{\frac{2k+\alpha -(l+j)}{2k}}\\
&\leq [\partial_{x}^{j+l} v ]_{\frac{2k+\alpha -(l+j)}{2k},t} \, T^{\frac{2k+\alpha -(l+j)}{2k}} \leq [\partial_{x}^{j+l} v ]_{\frac{2k+\alpha -(l+j)}{2k},t} \, T^{\frac{\alpha }{2k}}.
\end{align*} 
For the case $j=l=0$, we have 
\begin{align*}
\sup_{(t,x) \in [0,T] \times [0,1]} |v (t,x)| = \sup_{(t,x) \in [0,T] \times [0,1]}  \frac{|v(t,x) -v(0,x)|}{|t-0|} |t| \leq 
T  \cdot \sup_{ [0,T] \times [0,1]} |\partial_{t}v|.
\end{align*}

\smallskip 

Next, with the similar idea and using the fact that $|x-y| \leq 1$, $ l+m+1\leq 2k$, we have 
\begin{align*}
[\partial_{x}^{m+l}v ]_{\alpha,x} &=\sup_{(t,x), (t,y) \in [0,T] \times [0,1]}\frac{|\partial_{x}^{m+l}v(t,x)-\partial_{x}^{m+l}v(t,y) |}{|x-y|^{\alpha}} \\
&=\sup_{(t,x), (t,y) \in [0,T] \times [0,1]}\frac{|\partial_{x}^{m+l}v(t,x)-\partial_{x}^{m+l}v(t,y) |}{|x-y|}|x-y|^{1-\alpha}\\
& \leq \sup_{(t,x) \in [0,T] \times [0,1]} |\partial_{x}^{m+l+1}v(t,x)|\\
& \leq [\partial_{x}^{m+l+1} v ]_{\frac{2k+\alpha -(m+l+1)}{2k},t} \, T^{\frac{2k+\alpha -(m+l+1)}{2k}} 
\leq [\partial_{x}^{m+l+1} v ]_{\frac{2k+\alpha -(m+l+1)}{2k},t} \, T^{\frac{\alpha}{2k}} 
\text{.} 
\end{align*}

\smallskip 

For $ j\in \{0, \ldots, m \}$, 
\begin{align*}
[\partial_{x}^{j} \partial_{x}^{l} v ]_{\frac{m+\alpha -j}{2k},t}
&=  \sup_{(t,x), (t^\prime,x) \in [0,T] \times [0,1]}  \frac{ | \partial_{x}^{j+l} v (t,x) - \partial_{x}^{j+l} v (t^\prime,x)|}{|t-t^\prime|^{\frac{m+\alpha +1 -j}{2k}}} |t-t^\prime|^{\frac{1}{2k}}\\
& \leq [\partial_{x}^{l+j} v ]_{\frac{m+l+1+\alpha -(j+l)}{2k},t}\, T^{\frac{1}{2k}}. 
\end{align*}
The computation above doesn't make sense, if $m=2k-1$ and $j=0$ (and hence $l=0$), since the power of $|t-t^\prime|$ in the denominator is $\frac{m+\alpha+1 -j}{2k} >1$.
Thus, in the case $m=2k-1$, $l=j=0$, the estimate is treated as: 
\begin{align*}
[\partial_{x}^{j} \partial_{x}^{l} v ]_{\frac{m+\alpha -j}{2k},t}
=[ v ]_{\frac{2k-1+\alpha }{2k},t} 
&= \sup_{(t,x), (t^\prime,x) \in [0,T] \times [0,1]}  \frac{|v(t,x)-v(t^\prime,x)|}{|t-t^\prime|^{\frac{2k-1+\alpha}{2k} +\frac{1-\alpha}{2k}}} |t-t^\prime|^{\frac{1-\alpha}{2k}} \\ 
& \leq  \left( \sup_{[0,T]\times [0,1]} |\partial_{t} v| \right) T^{\frac{1-\alpha}{2k}}.
\end{align*} 

By putting all the estimates above together, 
we obtain: 
\begin{align*}
&\|\partial_{x}^{l} v \|_{C^{ \frac{m+\alpha}{2k}, m+\alpha}([0,T]\times[0,1])} 
\\
&\leq \Big ( \sum_{l\neq0 , j=0 }^{m} [\partial_{x}^{j+l} v ]_{\frac{2k+\alpha -(l+j)}{2k},t}+ [\partial_{x}^{m+l+1} v ]_{\frac{2k+\alpha -(m+l+1)}{2k},t} + \sum_{ j=0  }^{m}  [\partial_{x}^{l+j} v ]_{\frac{m+l+1+\alpha -(j+l)}{2k},t}
\Big) T^{\frac{\alpha}{2k}}\\
& \quad + C\left( \sup_{ [0,T] \times [0,1]} |\partial_{t}v| \right) T^{\frac{1-\alpha}{2k}} .
\end{align*}
and finish the proof of \eqref{interpolation-inequality}. 

The proof of \eqref{interpolation-inequality-x} follows from the observation that 
\begin{equation} 
\label{eq:estHolbdy}
\| \pa_x^l v (\cdot, x) \|_{C^{\frac{m+\alpha}{2k}}([0,T])} 
\leq \| \pa_x^l v \|_{C^{\frac{m+\alpha}{2k}, m+\alpha}([0,T]\times[0,1])} \, , 
\end{equation}
due to \eqref{eq:normapp1}. 

The proof of \eqref{interpolation-inequality-x-1} follows from applying \eqref{eq:estHolbdy}, the definition of H\"{o}lder-norms (or see \eqref{eq:normapp1}), and 
\begin{align*} 
\sup_{t, t^\prime \in [0,T]} 
\frac{|\partial_t v(t,x)-\partial_t v(t^\prime,x)|}
{|t-t^\prime|^{\frac{\alpha^\prime}{2k}+\frac{\alpha-\alpha^\prime}{2k}}} |t-t^\prime|^{\frac{\alpha-\alpha^\prime}{2k}} 
\leq 
\sup_{t, t^\prime \in [0,T]} 
\frac{|\partial_t v(t,x)-\partial_t v(t^\prime,x)|}
{|t-t^\prime|^{\frac{\alpha}{2k}}} 
\text{.}
\end{align*}

\end{proof}

\begin{lem}[{\cite[Lemmas C.3 $\sim$ C.6, and 4.1]{DP14}}] 
\label{lem:C_3}
Let $\gamma: I \rightarrow M \subset \mathbb{R}^n$ be a smooth map, and $\phi$ be a smooth vector field on $M$. 
Then, we have the following identities:   
\begin{align*} 
(i.)~ & | \partial_{x}( |\phi|) | \leq  |D_{x}\phi| ~~~~~ \text{ almost everywhere},  
 \\ 
(ii.)~ & 
 \| D_{x}\phi \|_{2} \leq c(n) \left(\varepsilon  \| \phi \|_{2, 2}+\frac{1}{\varepsilon} \| \phi \|_{2} \right) 
~~~~~~~~~~~~~~~~~~~~~~~~~~~~~~~~~~~~\forall\,  \varepsilon \in (0,1), 
\\ 
(iii.)~ & \| D_{x}^{i}\phi \|_{2} \leq  c(i, n, k) \cdot (\varepsilon  \| \phi \|_{k, 2}
+\varepsilon^{\frac{i}{i-k}} \| \phi \|_{2}), 
~~\forall\, \varepsilon \in (0,1), k\geq 2, 1\le i\le k-1, 
\\ 
(iv.)~ & \| D_{x}^{i}\phi \|_{2} \leq  c(i, n, k)  \| \phi \|_{k, 2}^{\frac{i}{k}} \| \phi \|_{2}^{\frac{k-i}{k}}, 
~~~~~~~~~~~~~~~~~~~~~~~~~~\forall\, k\geq 2, 1\le i\le k-1, 
\\ 
\nonumber 
(v.)~ &  \| D_{x}^{i}\phi \|_{p} \leq 
c(i, p, n, k) \cdot \| D_{x}^{i}\phi \|_{k-i, 2}^{\frac{1}{k-i}\left(\frac{1}{2}-\frac{1}{p}\right)} \,  \,  \| D_{x}^{i}\phi \|_{2}^{1-\frac{1}{k-i}\left(\frac{1}{2}-\frac{1}{p}\right)}, 
\\ 
& ~~~~~~~~~~~~~~~~~~~~~~~~~~~~~~~~~~~~~~~~~~~~~~~~~~~~~~~~~~~ 
\forall\, p \geq 2, k \geq 1, 1\le i\le k-1, 
\\ 
\nonumber 
(vi.)~ & \| D_x^{i}\phi \|_{p} \leq  c(i, p, n, k) \cdot \|\phi \|_{k, 2}^{\alpha}  \cdot \|\phi \|_{2}^{1-\alpha}, 
\\
& ~~~~~~~~~~~~~~~~~~~~~~~~~~~ \forall\,  0 \leq i \leq k, k \in \mathbb{N}, p \geq 2, 
~\text{ where }~ \alpha=\frac{1}{k}\left(i+\frac{1}{2}-\frac{1}{p}\right). 
\end{align*}
\end{lem} 

\begin{proof}
The proof follows exactly the same argument in Lemmas C.3 $\sim$ C.6 and 4.1 in \cite{DP14}, 
except that the covariant differentiation $D$ is 
the Levi-Civita connection of an $m$-dimensional Riemannian manifold $M$ 
defined in \eqref{eq:co-deri}, i.e., 
$D_x \phi (\gamma)=\partial_x \phi (\gamma)-\Pi_\gamma (\phi, \partial_x\gamma)$.
Note that the covariant differentiation $D_x$ in an extrinsic setting is also a projection to the tangent subspace $T_{\gamma}M\subset\mathbb{R}^n$ after taking partial differentiation of a tangent vector field $\phi$ along $\gamma(x)$ in $M$, while the covariant differentiation $\nabla_s$ in \cite{DP14} is defined as 
a projection to the normal subspace after taking partial differentiation of a normal vector field $\phi$ w.r.t. the arclength parameter of $\gamma$. 
Since the proof is the same as the one in \cite{DP14}, we leave the proof to the interested reader. 

\end{proof}

\begin{lem}[{\cite[Lemma $A.1$]{DLP19}}]
\label{Younggeneral}
Let $a_1, \ldots, a_n$ be positive numbers and assume $0<i_1+\cdots +i_n<2$, 
where $i_\mu>0$ for all $\mu=1, \ldots, n$. 
Then, for any $\varepsilon>0$, we have
\begin{align*}
a^{i_1}_{1} \cdot a^{i_2}_{2}\cdot \ldots \cdot a^{i_n}_{n} \leq \varepsilon (a^2_1+\ldots +a^2_{n})+C_{\varepsilon}\text{.}
\end{align*}
\end{lem}

\begin{lem}
\label{lem:interpolation-ineq} 
Let $\gamma: [t_0,T_0) \times I \rightarrow M$ be a smooth map and $\ell \in \mathbb{N}_{0}$. 
Assume that the Riemannian curvature tensor $R_M$ is smooth and uniformly bounded in $C^{\infty}$-norm. 
If $A, B\in \mathbb{N}$ with $B \geq 2$ and $A+\frac12 B<2\ell+5$, then we have
\begin{align*} 
 \sum_{\substack{[\![a,b]\!] \leq [\![A,B]\!]\\c \leq \ell+2, ~ b\geq 2}} \int_{I}\big\vert P^{a,c}_{b}( \gamma_{x}  ) \big\vert\text{ }dx \leq 
C \cdot \| \gamma_{x}  \|_{\ell+2, 2}^{\alpha} \, \| \gamma_{x}  \|_{L^2(I)}^{B-\frac{A}{\ell+2}}
\end{align*}
where  $\alpha= (A + \frac12 B-1)/(\ell+2)$ and $C=C(n, \ell, A, B, N_{L})$ 
for some constant $L\in\mathbb{N}_0$ such that $\|R_M\|_{C^{L}} \leq N_L$.
\end{lem}

\begin{proof} 
Assume that $i_{1}, \ldots, i_{b} \in \{0, 1, \ldots, c\}$, $i_{1}+\cdots+i_{b}=a$. 
From the definition of $P^{a,c}_{b}( \gamma_{x} )$ 
and by applying the H\"{o}lder's inequality, and Lemma \ref{lem:C_3} (vi.), 
we obtain 
\begin{align*} 
\nonumber 
&\int_{I}| P^{a,c}_{b}( \gamma_{x}  )| \text{ }dx 
\leq C \cdot \int_{I} \prod^{b}_{\mu=1}|D_{x}^{i_{\mu}} \gamma_{x} | 
\text{ }dx 
\leq C \cdot \prod^{b}_{\mu=1} \| D_{x}^{i_{\mu}} \gamma_{x}  \|_{L^{b}(I)} 
\\
& ~~~~~~~~~  
\leq C \cdot \prod^{b}_{\mu=1} \|  \gamma_{x}  \|_{\ell+2, 2}^{\frac{1}{\ell+2}\left[i_{\mu}+\frac{1}{2}-\frac{1}{b}\right]} 
 \cdot \| \gamma_{x}  \|_{L^2(I)}^{\frac{\ell+2-i_{\mu}}{\ell+2}}
\leq C \cdot \|   \gamma_{x}  \|_{\ell+2, 2}^{\frac{1}{\ell+2}\left(a+\frac{b}{2}-1\right)}  
\cdot \|  \gamma_{x}  \|_{L^2(I)}^{b-\frac{a}{\ell+2}} 
\end{align*}
where $C=C\left(m,n, \ell, a, b, N_L\right)$.  
The proof of the lemma follows from applying the inequalities. 
\end{proof}

%%%%%%%%%%%%%%%%%%%%%%%%%%%%%%%%%%%%%%%%%%%%%%%%%%%%%%%%%%%%%%%%%%
%%%%%%%%%%%%%%%%%%%%%%%%%%%%%%%%%%%%%%%%%%%%%%%%%%%%%%%%%%%%%%%%%%
%%\subsection{A brif overview of Solonnikov's theory  \cite{Solonnikov65}}%%
%%%%%%%%%%%%%%%%%%%%%%%%%%%%%%%%%%%%%%%%%%%%%%%%%%%%%%%%%%%%%%%%%%
%%%%%%%%%%%%%%%%%%%%%%%%%%%%%%%%%%%%%%%%%%%%%%%%%%%%%%%%%%%%%%%%%%

\subsection{A Brief Overview of Solonnikov's Theory \cite{Solonnikov65}}
\label{subsec:Solonnikov}

We provide a brief overview of Solonnikov's convention and well-posedness result for linear parabolic systems, in particular for the form $\mathcal{L}u=f$, where $\mathcal{L}$ is defined as 
\begin{equation}\label{eq:Spline.flow}
\mathcal{L}u=\partial_t u + (-1)^b \mathcal{D}u,
\end{equation}
and $\mathcal{D}$ is an elliptic operator of order $2b$, with $b \in \mathbb{N}$. This summary is based on \cite[Theorem 4.9, Page 121]{Solonnikov65}.

Let $Q=\Omega \times [0,T]$ be a cylindrical domain in the space $\mathbb{R}^{n+1}$, and  $\Omega$ be a bounded and open domain with a smooth boundary $S=\partial \Omega$ in the space $\mathbb{R}^{n}$. The side surface of the cylinder $Q$ is denoted as $\Gamma=S \times [0,T]$. In the cylinder $Q$ we consider systems parabolic (boundary value problem) of $m$ equations with constant coefficients containing $m$ unknown functions $u_1, \ldots, u_m$, 
\begin{equation}
\label{Solonnikov:eq0}
\begin{cases}
\sum\limits^{m}_{j=1} l_{kj}(x, t, \frac{\partial}{\partial{x}}, \frac{\partial}{\partial{t}})u_{j}(x,t)=f_{k}(x,t) ~~~(k=1, \ldots, m),\\
\sum\limits^{m}_{j=1} B_{qj}(x, t, \frac{\partial}{\partial{x}}, \frac{\partial}{\partial{t}})u_{j}(x,t) \vert_{\Gamma}=\Phi_{q}(x,t) ~~~(q=1, \ldots, br),\\
\sum\limits^{m}_{j=1} C_{\alpha j}(x, \frac{\partial}{\partial{x}}, \frac{\partial}{\partial{t}})u_j(x,t) \vert_{t=0}=\varphi_{\alpha}(x) ~~~(\alpha=1, \ldots, r),
\end{cases}
\end{equation}
where the $l_{kj}$, $B_{qj}$ are linear differential operators with coefficients depending on $t$ and $x\in\partial\Omega$, 
$C_{\alpha j}$ are linear differential operators with coefficients depending on $x\in\Omega$,  
(see \cite[Equation (1.4) Page 12]{Solonnikov65}), 
and $f_{k}$, $\Phi_{q}$, $\varphi_{\alpha}$ are specified functions. 
The functions $l_{kj}$, $B_{qj}$ are polynomials in $t$ and $x$, and  $C_{\alpha j}$ are polynomials in $x$. 
As there is no confusion on the smoothness, the differential operators $\mathcal{B}=(B_{qj})$ and $\mathcal{C}=(C_{\alpha j})$ at the boundaries of the time-space domain $Q$ can be understood as defined in the interior of $Q$ and passing to the boundaries of $Q$. Keeping this in mind is useful later in understanding the Compatibility Conditions introduced by Solonnikov (see \cite[Page 98]{Solonnikov65} or the definition listed below).

The linear parabolic system \eqref{eq:Spline.flow}, as discussed in this article, corresponds to the case $b = k$ and $r = 1$ in the general framework of \eqref{Solonnikov:eq0} and the subsequent discussions (see the definitions of $b$ and $r$ in \cite[Pages 8-9]{Solonnikov65}). 
Note that $\mathrm{i}\xi$ and $p$ are the phase variables corresponding to $\partial_x$ (or more precisely to $(\partial_{x_1},\cdots,\partial_{x_n})$) and $\partial_t$, respectively. Accordingly, since the spatial dimension $n$ equals $1$, we treat $\xi$ as a real number rather than a vector $\xi=(\xi_1,\cdots,\xi_n)\in\mathbb{R}^n$.

Let $s_{k}, t_j \in \mathbb{Z}$, where $k, j \in \{1, \ldots, m\}$. 
Assume that the degree of polynomials 
$l_{kj}(x,t, \mathrm{i}\xi \lambda, p \lambda^{2b})$ with respect to the variable $\lambda$ at each point $(x,t) \in Q$ does not exceed $s_k+t_j$,  
and $l_{kj}=0$ if $s_k+t_j<0$. 
The polynomial $l_{kj}^{0}(x, t, \mathrm{i} \xi, p)$ is referred to as the principal part of $l_{kj}$ and the matrix $\mathcal{L}_0$ of elements $l_{kj}^{0}$ as the principal part of the matrix $\mathcal{L}$. 
Let 
\begin{align}
\label{def:L=det L_0}
L(y, t, \mathrm{i}\xi, p):= \det\mathcal{L}_0(y, t, \mathrm{i} \xi, p)
\text{.}
\end{align}
It is easy to verify that the condition  
$$ 
L(x,t, \mathrm{i} \xi \lambda, p \lambda^{2b})= \lambda^{\sum\limits_{j=1}^m (s_j+t_j)} L(x,t,\mathrm{i} \xi , p)  
$$ 
holds. 
By setting $\xi=0$, we see that 
\begin{align}
\label{eq:2br}
\sum\limits_{j=1}^m (s_j+t_j)=2br, 
\end{align} 
for some $r$, depending on the parabolic system. 
We may also let 
$$\underset{j}\max \, s_j=0, 
~~~\text{ and } ~~~~~~ 
\sum\limits^{m}_{j=1}(s_{j}+t_{j})=2br, ~ r>0.
$$ 
Let $\beta_{qj}$, $\gamma_{\alpha j}$ be the degree of the polynomials $B_{qj}(x,t, \mathrm{i}\xi \lambda,p \lambda^{2b})$, $C_{\alpha j}(x, \mathrm{i}\xi \lambda, p \lambda^{2b})$ with respect to $\lambda$, respectively. If $B_{qj}=0$, $C_{\alpha j}=0$, take for $\beta_{qj}$, $\gamma_{\alpha j}$ any integer. 
Define 
\begin{align*}
\begin{cases}
\sigma_{q}=\max \{\beta_{qj}-t_{j}: j \in \{1, \ldots, m\}\},
\\
\varrho_{\alpha}=\max \{\gamma_{\alpha j}-t_{j}: j \in \{1, \ldots, m\}\}.
\end{cases}
\end{align*} 
Denote by $B_{qj}^{0}$ and $C_{\alpha j}^{0}$ the principal part of $B_{qj}$ and $C_{\alpha j}$ respectively. Let $\mathcal{B}_{0}:=(B_{qj}^{0})$ and $\mathcal{C}_{0}:=(C_{\alpha j}^{0})$.

\begin{defi}[{\cite[Parabolicity condition, Pages 8 and 9]{Solonnikov65}}]
\label{def:parabolicity-condition} 
The (uniform) parabolicity condition is given by the requirement that the roots of the polynomial $L(x,t, \mathrm{i}\xi,p)$, defined in \eqref{def:L=det L_0},
considered as a function of the variable $p$, must satisfy the inequality 
$$
\operatorname{Re} p \leq -\delta |\xi|^{2b}
$$ 
for all real vectors $\xi=(\xi_1,\cdots,\xi_n)$ and all points $(x,t)\in Q$, where $\delta>0$ is a fixed constant. 
\end{defi}

\begin{rem}
\label{rem:parabolicity order}
In this article, \it{the order of parabolicity} or \it{the parabolicity order} is referred to be the $2br$ in \eqref{eq:2br}, where $r=1$ in the parabolic systems discussed in this article.  
\end{rem}

\begin{defi}[{\cite[Complementary conditions, Pages 11 and 12]{Solonnikov65}}] 
\label{def:complementary-condition}
Below are definitions of complementary conditions at parabolic boundaries. 
\begin{enumerate} 
\item[(i)] The polynomial $M^{+}$.  
Let $s \in S$, $\nu(x)=(\nu_1, \ldots, \nu_n)$ be the unit vector of the interior normal to $S$ at the point $x$ and let $\zeta(x)$ be a vector lying in the plane tangent to $S$ at the same point. For any $\zeta$, the polynomial $L(x, t, i(\zeta + \tau \nu), p)$ as a function of $\tau$ has $br$ roots $r_s^+$ with a positive imaginary part and $br$ roots with a negative imaginary part, provided $\operatorname{Re} p \geq -\delta_1 \zeta^{2b}$ ($0 < \delta_1 < \delta$) 
and $|p|^2 + |\zeta|^{4b} > 0$.
Let
\[
M^+(x, t, \zeta, \tau, p) = \prod_{s=1}^{br} (\tau - r_s^+(x, t, \zeta, \tau, p)). 
\]
\item[(ii)] 
The complementary conditions at the boundaries 
are fulfilled if, at each point $(x,t) \in \Gamma$ and for any tangent $\zeta(x)$, 
the rows of the matrix
\[
\mathcal{A}(x, t, i(\zeta + \tau \nu), p) = \mathcal{B}_0(x, t, i(\zeta + \tau \nu), p) \, \widehat{\mathcal{L}}_0(x, t, i(\zeta + \tau \nu), p)
\] 
are linearly independent modulo the polynomial $M^+$, 
$\operatorname{Re} p \geq -\delta_1 \zeta^{2b}$, 
and $|p|^2 + \zeta^{4b} > 0$, 
where $\widehat{\mathcal{L}}_0 =L \mathcal{L}_0^{-1}$. 
\item[(iii)]  
The complementary conditions of the initial datum 
are fulfilled if, at each point $x \in \Omega$, 
the rows of the matrix
\[
\mathcal{D}(x, p) = \mathcal{C}_0(x, 0, p) \, \widehat{\mathcal{L}}_0(x, 0, 0, p)
\] 
are linearly independent modulo the polynomial $p^r$, 
where $\mathcal{C}_0$ is the matrix of elements $C_{qj}^0$.  
\end{enumerate}
\end{defi}

\begin{rem}\label{Remark:Complementary-condition} 
In this article, we apply Solonnikov's theory to equation \eqref{Solonnikov:eq0} on the domain $Q = \Omega \times [0,T]$, where $\Omega = (0,1)$. In this one-dimensional spatial setting, there is no interior unit normal vector $\nu$ to consider, as the boundary $S = \{0,1\}$ consists of isolated points. Moreover, the polynomial $L$ is independent of the time variable $t$, and thus its roots $r_s$ are also time-independent. As a result, according to the definition of $M^+$, the function $M^+(x, t, \zeta, \tau, p)$ can be replaced by $M^+(y^{\ast}, \xi, p)$, where $y^{\ast} \in \{0,1\}$ denotes a boundary point and the spatial phase variable $\zeta$ is written as $\xi$ in this context. 
\end{rem}

The linear parabolic system \eqref{Solonnikov:eq0} can be formally written as 
\begin{equation}
\label{eq:Solonnikov}
\begin{cases}
\mathcal{L}u=f, 
\\
\mathcal{B}u |_{\Gamma}=\Phi, 
\\ 
\mathcal{C} u |_{t=0}=\phi,
\end{cases}
\end{equation} 
and is assumed to satisfy Assumption \ref{assumption:complementary-condition} stated below.

\begin{assumption}
[{\cite[Page 97 after (4.18) therein]{Solonnikov65}}]
\label{assumption:complementary-condition}
\text{ }
\begin{enumerate}
\item[(i)] the first equation in \eqref{eq:Solonnikov} satisfies the parabolicity conditions;
\item[(ii)] the second equation in \eqref{eq:Solonnikov} satisfies the complementarity conditions at the boundaries;
\item[(iii)] the third equation in \eqref{eq:Solonnikov} satisfies the complementarity conditions of the initial datum.
\end{enumerate}
\end{assumption}

\bigskip 
\begin{defi}
[{\cite[The Compatibility Conditions on Page 98]{Solonnikov65} }]
\label{def:compatibility-conditions-general-equa}
The compatibility conditions of order $\ell$ is fulfilled 
if
\begin{equation}
\label{compatibility-condition-general}
\pa^{i_q}_t \sum_{j=1}^{br} B_{qj}(x^\ast,t, \pa_x,\pa_t) u_j |_{\substack{x^\ast\in \partial\Omega\\t=0}} = \pa^{i_q}_t \Phi_q |_{t=0},
\end{equation}
for $2b i_q+\sigma_q\leq \ell$. 
The boundary operators in 
\eqref{compatibility-condition-general} should be interpreted as differential equations in the spatial variable $x$. 
These equations are derived by substituting each temporal derivative $\partial_t$ with the corresponding spatial derivative $\partial_x$, based on the transformation relations given in the first equation of \eqref{eq:Solonnikov}. Specifically, we use the relation $\partial_t u=(-1)^{b+1} \mathcal{D}u-f$ to perform this substitution. 
\end{defi}

\bigskip 
%%%%%%%%%%%%%Theorem 4.9, Page 121 of Solonnikov65

\begin{teo}[{\cite[Theorem 4.9, Page 121]{Solonnikov65}}]
\label{Solonnikov-theorem}    
Let $l$ be a positive, non-integer number satisfying $l>max\{0,\sigma_{1}, \ldots, \sigma_{br}\}$. Let $S \in C^{l+t_{max}}$, and let the coefficients of the operators $l_{kj}$ belong to the class $C^{\frac{l-s_{k}}{2b},l-s_{k}}(Q)$, those of the operator $C_{\alpha j}$ to the class $C^{l-\varrho_\alpha}(\Omega)$, and those of the operator $B_{qj}$ to the class $C^{\frac{l-\sigma_{k}}{2b},l-\sigma_{k}}(\Gamma)$. 
\\
If $f_{j} \in C^{\frac{l-s_{k}}{2b},l-s_{k}}(Q)$, $\phi_{\alpha} \in C^{l-\varrho_{\alpha}}(\Omega)$, $\Phi_{q} \in C^{\frac{l-\sigma_{k}}{2b},l-\sigma_{k}}(\Gamma)$ and if compatibility conditions of order $l^{\prime}=[l]$ are fulfilled, then problem \eqref{eq:Solonnikov} has a unique solution $u=(u_{1}, \ldots, u_{m})$ with $u_{j} \in C^{\frac{l+t_{j}}{2b},l+t_{j}}(Q)$ for which the inequality 
\begin{align*} 
\sum^{m}_{j=1} \Vert u_{j} \Vert_{C^{\frac{l+t_{j}}{2b},l+t_{j}}(Q)} \leq C\left(\sum^{m}_{j=1} \Vert f_{j} \Vert_{C^{\frac{l-s_{j}}{2b},l-s_{j}}(Q)}+\sum^{r}_{\alpha=1} \Vert \phi_{\alpha} \Vert_{C^{l-\varrho_{\alpha}}(\Omega)}+\sum^{br}_{q=1} \Vert \Phi_{q} \Vert_{C^{\frac{l-\sigma_{k}}{2b},l-\sigma_{k}}(\Gamma)}\right)
\end{align*}
is valid.
\end{teo}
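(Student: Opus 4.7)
The plan is to prove Theorem \ref{Solonnikov-theorem} by the classical Solonnikov strategy: reduce the problem to model problems with constant coefficients on the half-space and the full space, construct explicit integral representations via Fourier--Laplace transforms and associated Green's/Poisson kernels, derive sharp H\"older estimates for those kernels, and finally globalize by a localization and perturbation argument. First I would verify the a priori estimate in the conclusion, since then existence can be obtained by the method of continuity once the statement is proved on a dense set of data and existence is known for a simpler comparison operator (e.g.\ replacing $\mathcal{L}_0$ by a diagonal heat-type operator $\partial_t+(-1)^b\Delta^b\cdot Id$ with the same principal symbol structure), and uniqueness is then immediate from the a priori estimate.

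Next I would treat the model problem in $\mathbb{R}^n\times\mathbb{R}_+$ with constant coefficients and zero boundary/initial data. Applying the Fourier transform in $x$ and the Laplace transform in $t$ reduces $\mathcal{L}_0 u=f$ to an algebraic system whose determinant $L(\mathrm{i}\xi,p)$ is invertible thanks to the uniform parabolicity condition $\operatorname{Re}p\le -\delta|\xi|^{2b}$. Inverting the transforms produces a Green's matrix $G(x-y,t-\tau)$ whose derivatives of order $2br+|\alpha|$ in $x$ and $\beta$ in $t$ are estimated by anisotropic Gaussian/exponential bounds of the form $C t^{-(n+|\alpha|+2b\beta)/2b}$ up to parabolic scaling. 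Standard singular-integral and convolution estimates in parabolic H\"older spaces $C^{\frac{l}{2b},l}$ (cf.\ the Calder\'on--Zygmund/Schauder tradition) then yield the full-space bound
\begin{equation*}
\sum_j\|u_j\|_{C^{\frac{l+t_j}{2b},\,l+t_j}}\le C\sum_j\|f_j\|_{C^{\frac{l-s_j}{2b},\,l-s_j}}.
\end{equation*}
For the half-space problem $\mathbb{R}^n_+\times\mathbb{R}_+$ I would take partial Fourier--Laplace transforms only in the tangential variables $x'$ and in $t$, reducing the boundary value problem to a system of ODEs in $x_n$ on $(0,\infty)$ with decay at $+\infty$. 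The complementary conditions at the boundary are precisely what guarantees that the stable subspace of the ODE system matches the boundary operators $\mathcal{B}^0$, so the ODE is uniquely solvable for each $(\xi',p)$; inverse-transforming yields a Poisson-type kernel whose pointwise decay again produces the required H\"older estimates. The complementary conditions on $\mathcal{C}^0$ at $t=0$ play the analogous role for the initial data (viewed as the boundary $t=0$ of $\mathbb{R}^n\times\mathbb{R}_+$).

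Having solved the constant-coefficient model problems on $\mathbb{R}^n_+\times[0,T]$ and $\mathbb{R}^n\times[0,T]$, I would globalize to the cylinder $Q=\Omega\times[0,T]$ by a finite covering $\{U_\nu\}$ of $\bar\Omega$ subordinate to a partition of unity $\{\chi_\nu\}$, where each chart either lies in the interior (handled by the full-space model) or straightens a piece of $\partial\Omega$ into a hyperplane using a $C^{l+t_{\max}}$ diffeomorphism (handled by the half-space model). Freezing the coefficients of $\mathcal{L},\mathcal{B},\mathcal{C}$ at a reference point in each chart turns the commutators $[\mathcal{L},\chi_\nu]u$ and $[\mathcal{B},\chi_\nu]u$, together with the variation of coefficients, into lower-order perturbations whose H\"older norms are controlled by $u$ with a small multiplier thanks to standard interpolation; a finite iteration and summation over charts produces the global a priori estimate. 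The method of continuity along the family $\mathcal{L}_\sigma=(1-\sigma)(\partial_t+(-1)^b\Delta^b\,Id)+\sigma\mathcal{L}$, $\mathcal{B}_\sigma=(1-\sigma)\mathcal{B}_{\mathrm{trivial}}+\sigma\mathcal{B}$, $\mathcal{C}_\sigma$ analogous, with the complementary conditions preserved along the homotopy, then promotes the a priori estimate to existence.

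The main obstacle will be handling the compatibility conditions of order $l'=[l]$ at the corner set $\partial\Omega\times\{0\}$, because the H\"older regularity $C^{\frac{l+t_j}{2b},l+t_j}$ up to this corner is exactly what is at stake and the model half-space estimates only yield this regularity if the boundary data, initial data, and the equation are themselves consistent there to the appropriate order. Concretely, the Poisson representation for the half-space with nonzero boundary datum $\Phi$ and initial datum $\varphi$ produces corner singularities unless $\partial_t^{i_q}\mathcal{B}u|_{t=0}=\partial_t^{i_q}\Phi|_{t=0}$ whenever $2bi_q+\sigma_q\le l'$, with $\partial_t u|_{t=0}$ reconstructed from $\varphi$ and $f$ through the equation. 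I would handle this by subtracting off an explicit polynomial corrector built from $\varphi$, $f|_{t=0}$, and their tangential derivatives so that the residual data vanish to sufficient order at the corner; the compatibility conditions are precisely the obstructions to such a corrector existing. Once the residual problem has data vanishing to order $l'$ at $t=0$, the half-space Poisson estimates apply without corner loss, and adding the corrector back preserves the target H\"older class. Combining this corner analysis with the chart-by-chart construction yields the full theorem.
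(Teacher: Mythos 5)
This statement is not proved in the paper at all: it is quoted, essentially verbatim, from Solonnikov's monograph \cite[Theorem 4.9, p.~121]{Solonnikov65} and used as a black box in the short-time existence arguments of \S\ref{subsec:linear-problem} and \S\ref{Sec:linear}. There is therefore no internal proof to compare your attempt against. That said, your outline is a faithful high-level summary of Solonnikov's own strategy: constant-coefficient model problems in $\mathbb{R}^n\times\mathbb{R}_+$ and $\mathbb{R}^n_+\times\mathbb{R}_+$ solved by Fourier--Laplace transform, with uniform parabolicity giving invertibility of $L(\mathrm{i}\xi,p)$ and the complementary (Lopatinskii-type) conditions selecting the stable subspace for the tangential-transform ODE; anisotropic kernel bounds giving H\"older estimates; coefficient freezing, partition of unity and commutator control for the variable-coefficient case; the method of continuity for existence; and the corner analysis at $\partial\Omega\times\{0\}$ controlled by the compatibility conditions of order $l'=[l]$.

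Two caveats worth flagging. First, the most delicate technical content of Solonnikov's proof is precisely in the parts you label ``standard'': the anisotropic H\"older estimates for the Poisson kernels in the presence of the general Douglis--Nirenberg weight structure $(s_k,t_j,\sigma_q,\rho_\alpha)$, where the differentiability indices $\frac{l+t_j}{2b}$ and $\frac{l-\sigma_q}{2b}$ can be noninteger and distinct across components, so that standard scalar Schauder-kernel estimates do not apply directly; Solonnikov devotes much of his monograph to these estimates. Second, your corrector argument at the corner needs care: subtracting a Taylor polynomial in $t$ built from $\varphi$, $f|_{t=0}$ and the equation removes the obstruction only if the modified boundary and interior data remain in the stated H\"older classes after subtraction, which again relies on the compatibility conditions holding to the full order $l'$ and on an extension lemma for the corrector from $\Omega$ to $Q$ preserving the parabolic H\"older class. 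These are solvable issues and are treated in \cite{Solonnikov65}, but they are not merely bookkeeping. If your goal is to reproduce the theorem rather than merely outline it, these two places are where the actual work concentrates.
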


\begin{rem}
In Theorem \ref{Solonnikov-theorem}, the constant $C>0$  is independent of the non-homogeneous terms $f$ and the given initial-boundary datum $\phi$, $\Phi$. 
\end{rem}

{\bf Acknowledgement.} 
The authors wish to express their sincere gratitude to the anonymous referees for their thoughtful reviews and valuable feedback, which led to substantial improvements in this paper.  
This project is supported by the following grants:
C.-C. L. received support from the research grant (MoST 111-2115-M-003-015) of the National Science and Technology Council (NSTC) of Taiwan, the iCAG program under the Higher Education Sprout Project of National Taiwan Normal University and the Ministry of Education (MoE) of Taiwan, and the National Center for Theoretical Sciences (NCTS) at Taipei.
T.D.T. received support from the research grant (MoST 111-2115-M-003-014) of the NSTC of Taiwan. 
T.D.T. would like to thank the VIASM for the hospitality and for the excellent working condition.

{\bf Data availability.} 
Data sharing is not applicable to this article as no datasets were generated or analyzed during the current study.

{\bf Declarations.} 
The authors declare no conflicts of interest related to this publication.

\end{document}